\pdfobjcompresslevel=0
\documentclass[a4paper,11pt,DIV=11,
abstract=on
]{scrartcl}
%
%
\usepackage{mathtools}
\mathtoolsset{showonlyrefs}
\usepackage{amsmath, amsthm, amssymb}
\usepackage{fontenc}
\usepackage[utf8]{inputenc}
\usepackage[english]{babel}
\usepackage{graphicx}
\usepackage{subcaption,xargs}
\usepackage{algorithm, booktabs}
\usepackage[noend]{algpseudocode}
\usepackage{enumitem,listings}
\setlist[itemize]{label=\textbullet}
\usepackage{multirow}
\usepackage{hyperref}
\lstset{language=Matlab,basicstyle=\ttfamily,keywords=\ttfamily}

\clubpenalty = 10000
\widowpenalty = 10000
\displaywidowpenalty = 10000

\DeclareCaptionLabelSeparator{periodspace}{.\ }
\captionsetup{font=small,format=hang,labelsep=periodspace,indention=-3em,labelfont=bf,%
	width=.95\textwidth,skip=.5\baselineskip}
\captionsetup[subfigure]{font=small,aboveskip=.5ex,belowskip=1.5ex, justification=centering,indention=-1em,%
	labelformat=simple,labelsep=space, labelfont=small, hypcap=true,%
	width=\textwidth,skip=.5\baselineskip}

\newtheorem{theorem}{Theorem}[section]

\newtheorem{proposition}[theorem]{Proposition}
\newtheorem{lemma}[theorem]{Lemma}
\newtheorem{remark}[theorem]{Remark}
\newtheorem{example}[theorem]{Example}

\setkomafont{sectioning}{\rmfamily\bfseries}
\setkomafont{title}{\rmfamily}
\setkomafont{descriptionlabel}{\rmfamily\bfseries}

\newcommand{\C}{\mathbb{C}}

\newcommand{\RR}{\mathbb{R}}

\newcommand{\NN}{\mathbb{N}}

\newcommand{\dist}{{\mathrm{d}}}

\newcommand{\HH}{\mathcal{H}}
\newcommand{\Hn}{\mathbb{H}}
\newcommand{\SPD}{\mathcal P}

\makeatletter
\newcommandx{\abs}[2][1=\@empty]{#1\lvert #2 #1\rvert}
\newcommandx{\norm}[3][1=\@empty,3=\@empty]{#1\lVert #2 #1\rVert_{#3}}
\makeatletter
\newcommand{\vect}[1]{\mathbf{#1}}

\newcommand{\tT}{\mathrm{T}}
\DeclareMathOperator*{\argmin}{arg\,min}
\DeclareMathOperator{\prox}{prox}

\DeclareMathOperator{\dom}{dom}
\DeclareMathOperator{\ri}{ri}
\DeclareMathOperator{\interior}{int}

\DeclareMathOperator{\arcosh}{arcosh}
\DeclareMathOperator{\arsinh}{arsinh}

\DeclareMathOperator{\TV}{TV}

\DeclareMathOperator{\Log}{Log}
\DeclareMathOperator{\Exp}{Exp}

\hypersetup{pdfauthor={Ronny Bergmann, Johannes Persch, Gabriele Steidl},
	pdftitle={A Parallel Douglas Rachford Algorithm
for Minimizing ROF-like Functionals 
on Images with Values in Symmetric Hadamard Manifolds},
	pdfsubject={revised manuscript},%
	pdfcreator = {pdflatex and TextMate},%
	pdfkeywords={},
	plainpages=false, pdfstartview=FitH, pdfview=FitH, pdfpagemode=UseOutlines,%
	bookmarksnumbered=true,bookmarksopen=false,bookmarksopenlevel=0,%
	colorlinks=true,linkcolor=black,citecolor=black,urlcolor=black%
}
\begin{document}
\title{
A Parallel Douglas Rachford Algorithm
for Minimizing ROF-like Functionals 
on Images with Values in Symmetric Hadamard Manifolds}
\subtitle{Extended Version}
\date{\today}
\author{Ronny Bergmann\footnote{Department of Mathematics,
Technische Universität Kaiserslautern,
Paul-Ehrlich-Str.~31, 67663 Kaiserslautern, Germany,
$\{$bergmann, persch, steidl$\}$@mathematik.uni-kl.de.}
\and Johannes Persch\footnotemark[1] \and Gabriele Steidl\footnotemark[1]}
\maketitle%
\begin{abstract}
	\noindent\small 
	We are interested in restoring images having values in a symmetric Hadamard manifold
	by minimizing a functional with a quadratic data term and a total variation like regularizing term.
	To solve the convex minimization problem, we extend the Douglas-Rachford algorithm 
	and its parallel version to symmetric Hadamard manifolds.
	The core of the Douglas-Rachford algorithm are reflections of the functions involved
	in the functional to be minimized.
	In the Euclidean setting the reflections of convex lower semicontinuous functions
	are nonexpansive. 
	As a consequence, convergence results for Krasnoselski-Mann iterations  imply the convergence of the 
	Douglas-Rachford algorithm.
	Unfortunately, this general results does not carry over to Hadamard manifolds, where
	proper convex lower semicontinuous functions can have expansive reflections. 
	However, splitting our restoration functional in an appropriate way, we have 
	only to deal with special functions namely, several distance-like functions
	and an indicator functions of a special convex sets.
	We prove that the reflections of certain distance-like functions on Hadamard manifolds are nonexpansive
	which is an interesting result on its own.
	Furthermore, the reflection of the involved indicator function is nonexpansive
	on Hadamard manifolds with constant curvature so that the Douglas-Rachford algorithm converges here.
		
	Several numerical examples demonstrate the advantageous performance of 
	the suggested algorithm compared to other existing methods
	as the cyclic proximal point algorithm or half-quadratic minimization.
	Numerical convergence is also observed in our experiments 
	on the Hadamard manifold of symmetric positive definite matrices  
	with the affine invariant metric which does not have a constant curvature.	
\end{abstract}

\section{Introduction} \label{sec:intro}
%
In the original paper~\cite{DR56}, the Douglas-Rachford (DR) algorithm was
proposed for the numerical solution of a partial differential equation, 
i.e., for solving systems of linear equations. 
It was generalized for finding a zero of the sum of two maximal monotone operators 
by Lions and Mercier \cite{LM79}, see also Passty's paper \cite{Pa79}.
Eckstein and Bertsekas  \cite{EB92} examined the algorithm with under/over-relaxation and inexact inner evaluations.
Gabay \cite{Ga83} considered problems of a special structure and showed that the DR algorithm applied to
the dual problem results in the alternating direction method of multipliers (ADMM) introduced in \cite{GM76,Gl84}.
For relations between the DR algorithm and the ADMM we also refer to \cite{gl14,Se09}.
In \cite{YY14} it is shown that the ADMM is in some sense self-dual, i.e.,
it is not only equivalent to the DR algorithm applied to the dual problem, but also to the primal one.
Recently, these algorithms were successfully applied in image processing mainly for two reasons:
the functionals to minimize allow for simple proximal mappings within the method,
and it turned out that the algorithms are highly parallelizable, see, e.g., \cite{CP08}.
Therefore the algorithms became one of the most popular ones in variational methods for image processing.
Continued interest in DR iterations is also due to its excellent, but still 
myserious performance  on various non-convex problems, see, e.g.,
\cite{BCL2002,BCL2003,BS2011,ERT2007,GE2008,HL2012} and for recent progress on the convergence of  ADMM methods for special
non-convex problems \cite{HLR2014,LP2014,MWRF2014,WCX2015,WYZ15,XYWZ2012}.

In this paper, we derive a parallel DR algorithm to minimize functionals on finite dimensional, symmetric Hadamard manifolds.
The main ingredients of the algorithm are reflections of the functions involved in the functional we want to minimize.
If these reflections are nonexpansive, then it follows from general convergence results of  
Krasnoselski-Mann iterations in CAT(0) spaces \cite{K2013} that the sequence produced by the algorithm converges.
Unfortunately, the well-known result in the Euclidean setting that reflections of
convex lower semicontinuous functions are nonexpansive does  not carry over to
the Hadamard manifold setting, see \cite{BH1999,FL2013}. Up to now it was only proved that
indicator functions of closed convex sets in Hadamard manifolds with constant curvature 
possess nonexpansive reflections \cite{FL2013}.
In this paper, we prove that certain distance-like functions have nonexpansive reflections on
general symmetric Hadamard manifolds. Such distance-like functions appear for example in the
manifold-valued counterpart of the Rudin-Osher-Fatemi (ROF) model~\cite{ROF92}. 

The ROF model is the most popular variational model for image restoration, 
in particular for image denoising.
For real-valued images, its discrete, anisotropic penalized form is given by
\begin{equation} \label{rof_real}
\begin{split}
	D(u;f) + \alpha \TV(u) &=\tfrac12 \lVert f - u \rVert_2^2 + \alpha \lVert\nabla u\rVert_1\\
		&= \tfrac12\sum_{i,j} (f_{i,j} - u_{i,j})^2 + \alpha\sum_{i,j} \bigl(  \abs{u_{i+1,j} - u_{i,j}} + \abs{u_{i,j+1} - u_{i,j}} \bigr), 
\end{split}
\end{equation}
where~$f = (f_{i,j}) \in \mathbb R^{N,M}$ 
is an initial corrupted image and $\nabla$ denotes
the discrete gradient operator usually 
consisting of first order forward differences in vertical and horizontal directions.
The first term is the \emph{data fidelity term} \(\mathcal D(u; f)\) measuring
similarity between \(u\) and the given data \(f\). The second term
\(\TV(u)\) is the total variation (TV) type \emph{regularizer} posing a small value of the
first order differences in \(u\). 
The regularization parameter $\alpha > 0$ steers the relation between both terms.
The popularity of the model arises from the fact that its minimizer is
a smoothed image that preserves important features such as edges.
There is a close relation of the ROF model to PDE and wavelet approaches, see \cite{SWBMW04}.

In various applications in image processing and computer vision the functions
of interest take values in a Riemannian manifold. 
One example is diffusion tensor imaging where the data is given on the Hadamard
manifold of positive definite matrices; see,
e.g.,~\cite{basser1994mr,BWFW04,BDFW2007,CTF2002,pennec2006riemannian,SSPB07,WFWBB06,WFBW03}.
In the following we are interested in generalizations of ROF-like functionals 
to manifold-valued settings, more precisely to data having values in symmetric Hadamard manifolds.
In~\cite{GM06,GM07}, the notion of the total variation of functions
having their values on a manifold was investigated based on
the theory of Cartesian currents.
The first work which applies a TV approach of circle-valued data
for image processing tasks is~\cite{SC11}. An
algorithm for TV regularized minimization problems on Riemannian
manifolds was proposed  in~\cite{LSKC13}. There, 
the problem is reformulated as a multilabel optimization problem
which is approached using convex relaxation techniques.
Another approach to TV minimization for manifold-valued data which
employs cyclic and parallel proximal point algorithms
and does not require labeling and relaxation techniques, was given
in~\cite{WDS2014}. A method which circumvents the direct work with manifold-valued data by
embedding the matrix manifold in the appropriate Euclidean space and applying
a back projection to the manifold was suggested in~\cite{RTKB14}. 
TV-like functionals on manifolds with higher order differences were handled in \cite{BBSW2015,BLSW14,BW15a}.
Finally we mention the relation to wavelet-type multiscale
transforms which were handled, e.g., in~\cite{GW2009,GW2012,RDSDS2005,WYG2007}.

We will apply the parallel DR algorithm to minimize the ROF-like functional
for images having values in a symmetric Hadamard manifolds.
To this end we will split the functional in an appropriate way and 
show the convergence of the algorithm by examining the reflections of the involved distance-like functions.
In the numerical part we show the very good performance of the proposed parallel DR algorithm
for various symmetric Hadamard manifolds with and without constant curvature.
In particular, we compare the algorithm with other algorithms
existing in the literature, namely the cyclic proximal point algorithm \cite{WDS2014}
and a half-quadratic minimization method applied to a smoothed version of the ROF-like functional \cite{BCHPS15}.

The outline of the paper is as follows:
We start by recalling the DR algorithm and its parallel version in the Euclidean setting
in Section~\ref{sec:hilbert}. The generalization to symmetric Hadamard manifolds will follow the same path.
In Section~\ref{sec:notation} we provide the notation and preliminaries in Hadamard manifolds
which are required to understand our subsequent findings. The parallel DR algorithm on symmetric Hadamard manifolds
is outlined in Section~\ref{sec:hadamard}. We prove that the sequence produced by the algorithm 
converges to a minimizer of the functional.
In Section~\ref{sec:rof} we show how the parallel DR algorithm can be applied to minimize
a ROF-like functional which can be used for restoring images with values in symmetric Hadamard manifolds.
Convergence of the algorithm is ensured if the reflections of the functions appearing in the functional 
are nonexpansive. For the ROF-like functional we have, due to an appropriate splitting, only to consider distance-like functions and an indicator function of a convex set.
Section~\ref{subsec:dist} contains the interesting result that reflections of certain distance-like functions on
symmetric Hadamard manifolds are nonexpansive.
In Section \ref{subsec:proj} we will see that indicator functions of closed convex sets have nonexpansive reflections 
on manifolds with constant curvature.
Numerical examples are demonstrated in Section \ref{sec:numerics}. These include manifolds such as
the hyperbolic model space and the space of symmetric positive definite matrices. Comparisons with
 other algorithms to minimize an ROF-like functional on Hadamard manifolds are given.
Conclusions are drawn in Section \ref{sec:conclusions}.
The appendix provides material on symmetric positive definite matrices and hyperbolic spaces
which are necessary for the implementation of the generalized parallel DR algorithm.
%
\section{Parallel DR Algorithm on Euclidean Spaces} \label{sec:hilbert}
%
We start by recalling the DR algorithm and its parallel form on Euclidean spaces.
We will follow the same path for data in a Hadamard manifold in Section~\ref{sec:hadamard}.
The main ingredients of the DR algorithm are proximal mappings and reflections.

For $\eta>0$ and a proper convex lower semicontinuous (lsc) function
$\varphi\colon \mathbb R^n \rightarrow (-\infty,+\infty]$, 
the \emph{proximal mapping} $\prox_{\eta \varphi}$ reads
\begin{equation} \label{prox_hilbert}
	\prox_{\eta \varphi}(x)
	\coloneqq \argmin_{y \in \mathbb R^n} \Big\{ \frac{1}{2} \lVert x-y\rVert_2^2 + \eta\varphi (y) \Big\},
\end{equation}
see~\cite{Mor62}. It is well-defined and unique.
The \emph{reflection} $R_p\colon \mathbb R^n \rightarrow \mathbb R^n$  at a point $p \in \mathbb R^n$ is given by
\begin{equation} \label{reflection_R}
	R_p (x) = 2p - x.
\end{equation}
Further, let
${\mathcal R}_\varphi\colon \mathbb R^n \rightarrow \mathbb R^n$
denote the reflection operator at $\prox_{ \varphi}$, i.e.,
\begin{equation} \label{reflection_R_1}
{\mathcal R}_\varphi (x) = 2\prox_{ \varphi}(x) - x.
\end{equation}
We call this operator \emph{reflection of the function}  $\varphi$.

Given two proper convex lsc functions
$\varphi,\psi\colon\mathbb R^n \rightarrow (-\infty,+\infty]$ the DR algorithm 
aims to solve
\begin{equation} \label{drs_split}
 \argmin_{x \in \mathbb R^n} \bigl\{ \varphi(x) + \psi(x) \bigr\}
\end{equation}
by the steps summarized in Algorithm~\ref{alg:DR_real}.
%
\begin{algorithm}[tbp]
	\caption[]{DR Algorithm for Real-Valued Data} 
	\label{alg:DR_real}
	\begin{algorithmic}
		\State \textbf{Input:}
		$t^{(0)}\in \mathbb R^n$,
		$\lambda_r \in[0,1]$
		with $\sum_{r \in \mathbb N} \lambda_r(1-\lambda_r) = + \infty$, 
		$\eta > 0$
		\State r = 0;
		\Repeat
		\State $t^{(r+1)}
		= \bigl(
			(1-\lambda_r)\operatorname{Id}
			+ \lambda_r {\mathcal R}_{\eta \varphi} {\mathcal R}_{\eta \psi}
			\bigr) \bigl(t^{(r)}\bigr)$;
		\State $r\rightarrow r+1$;
		\Until a stopping criterion is reached
	\end{algorithmic}
\end{algorithm}
%
It is known that the DR algorithm
converges for any proper convex lsc functions $\varphi,\psi$
under mild assumptions.
More precisely, we have the following theorem,
see~\cite{LM79} or~\cite[Theorem 27.4]{BC11}.
%
\begin{theorem} \label{conv_drs_real}
Let $\varphi,\psi\colon \mathbb R^n \rightarrow (-\infty,+\infty]$ be
proper convex lsc functions such that
$\ri(\dom \varphi) \cap \ri(\dom \psi) \not = \emptyset$.
Assume that a solution of~\eqref{drs_split} exists. 
Let $\{ \lambda_r \}_{r \in \mathbb N}$
fulfill $\sum_{r \in \mathbb N} \lambda_r(1-\lambda_r) = + \infty$ and $\eta >0$.
Then the sequence $\bigl\{t^{(r)} \bigr\}_{r \in \mathbb N}$ generated by
Algorithm~\ref{alg:DR_real} converges for any starting point $t^{(0)}$
to a point $\hat t$, and 
\begin{equation} \label{doch**}
	\hat x\coloneqq \prox_{\eta \psi} (\hat t)
\end{equation}
is a solution of~\eqref{drs_split}.
\end{theorem}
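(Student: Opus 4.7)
The plan is to prove the theorem by reducing the DR iteration to a Krasnoselski-Mann iteration for a nonexpansive operator and then identifying its fixed-point set with the solution set of \eqref{drs_split} via the proximal mapping.

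First I would verify that $T \coloneqq {\mathcal R}_{\eta\varphi}\,{\mathcal R}_{\eta\psi}$ is nonexpansive. The standard starting point is Moreau's result that $\prox_{\eta\varphi}$ is firmly nonexpansive, which is equivalent to the reflection ${\mathcal R}_{\eta\varphi}=2\prox_{\eta\varphi}-\operatorname{Id}$ being nonexpansive (and analogously for $\psi$). Since the composition of two nonexpansive maps is nonexpansive, $T$ is nonexpansive, and hence each averaged operator $(1-\lambda_r)\operatorname{Id}+\lambda_r T$ is nonexpansive as well. This is the purely metric part of the argument and should be straightforward to cite from \cite{BC11}.

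Next I would invoke the classical Krasnoselski-Mann theorem: if $T\colon\mathbb R^n\to\mathbb R^n$ is nonexpansive with $\operatorname{Fix}(T)\neq\emptyset$, and if the relaxation parameters $\lambda_r\in[0,1]$ satisfy $\sum_{r\in\mathbb N}\lambda_r(1-\lambda_r)=+\infty$, then the sequence $t^{(r+1)}=((1-\lambda_r)\operatorname{Id}+\lambda_r T)(t^{(r)})$ converges to some $\hat t\in\operatorname{Fix}(T)$. Thus everything reduces to showing (i) $\operatorname{Fix}(T)\neq\emptyset$ and (ii) that the limit $\hat t$ produces a minimizer of $\varphi+\psi$ via $\hat x=\prox_{\eta\psi}(\hat t)$.

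The main obstacle, and the only place where the qualification condition $\ri(\dom\varphi)\cap\ri(\dom\psi)\neq\emptyset$ enters, is the fixed-point characterization. The plan is to unfold $t=T(t)$: writing $x\coloneqq\prox_{\eta\psi}(t)$, the definition of the proximal mapping gives $\tfrac{1}{\eta}(t-x)\in\partial\psi(x)$, while $t=T(t)={\mathcal R}_{\eta\varphi}(2x-t)$ together with the analogous computation for $\prox_{\eta\varphi}$ yields $\tfrac{1}{\eta}(x-t)\in\partial\varphi(x)$. Adding these inclusions and using the subdifferential sum rule $\partial(\varphi+\psi)=\partial\varphi+\partial\psi$ (which holds precisely under the assumed qualification) gives $0\in\partial(\varphi+\psi)(x)$, i.e., $x$ minimizes $\varphi+\psi$. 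Conversely, starting from any minimizer $x^\ast$ and any $v\in\partial\varphi(x^\ast)\cap(-\partial\psi(x^\ast))$ one constructs $t^\ast\coloneqq x^\ast+\eta v\in\operatorname{Fix}(T)$, so $\operatorname{Fix}(T)$ is nonempty whenever \eqref{drs_split} has a solution.

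Combining these pieces, the Krasnoselski-Mann theorem yields $t^{(r)}\to\hat t\in\operatorname{Fix}(T)$ for any starting point $t^{(0)}$, and the fixed-point characterization gives that $\hat x=\prox_{\eta\psi}(\hat t)$ solves \eqref{drs_split}. The only nontrivial ingredient beyond the metric machinery is the sum rule for subdifferentials; everything else is bookkeeping. I would expect the parallel structure of the Hadamard version in Section~\ref{sec:hadamard} to follow this same outline, with the verification of nonexpansiveness of the reflections being the delicate step there rather than here.
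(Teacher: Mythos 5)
Your proposal is correct and takes essentially the same route the paper relies on: the paper does not prove Theorem~\ref{conv_drs_real} itself but cites \cite{LM79} and \cite[Theorem 27.4]{BC11}, and the argument it indicates is exactly yours --- firm nonexpansiveness of the proximal mappings makes ${\mathcal R}_{\eta\varphi}{\mathcal R}_{\eta\psi}$ nonexpansive, the Krasnoselski--Mann theorem gives convergence to a fixed point, and the fixed-point/minimizer correspondence is the same subdifferential bookkeeping carried out in Appendix~\ref{app:theo} for the manifold analogue (Theorem~\ref{th:soln}). One small sign slip in your converse step: with $v\in\partial\varphi(x^\ast)\cap\bigl(-\partial\psi(x^\ast)\bigr)$ the fixed point is $t^\ast=x^\ast-\eta v$ (equivalently, take $v\in\partial\psi(x^\ast)\cap\bigl(-\partial\varphi(x^\ast)\bigr)$ and set $t^\ast=x^\ast+\eta v$), because $\prox_{\eta\psi}(t^\ast)=x^\ast$ forces $(t^\ast-x^\ast)/\eta\in\partial\psi(x^\ast)$; since the relevant set is symmetric under negation this does not affect the nonemptiness of the fixed-point set or the rest of the argument.
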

%
The DR algorithm can be considered as a special case of the
\emph{Krasnoselski-Mann iteration}
\begin{equation} \label{it:km_real}
t^{(r+1)} = \bigl( (1-\lambda_r)\operatorname{Id}+ \lambda_r T\bigr)(t^{(r)})
\end{equation}
with $T\coloneqq {\mathcal R}_{\eta \varphi} {\mathcal R}_{\eta \psi}$.
Since the concatenation and convex combination of nonexpansive operators
is again nonexpansive, the proof is just based on the fact that
the reflection operator ${\mathcal R}_{\eta \varphi}$ of a proper convex lsc function $\varphi$ 
is nonexpansive. The latter will not remain true in the Hadamard manifold setting. 

\begin{remark}
The definition of the DR algorithm is dependent on the order 
of the operators ${\mathcal R}_{\eta \varphi}$ and ${\mathcal R}_{\eta \psi}$
although problem~\eqref{drs_split} itself is not. 
The order of the operators in the DR iterations was examined in~{\rm \cite{BM2015}}.
The authors showed that ${\mathcal R}_{\eta \varphi}$ is an isometric bijection from the fixed point set of 
$\frac12(\operatorname{Id}+  {\mathcal R}_{\eta \psi}{\mathcal R}_{\eta \varphi})$
to that of $\frac12(\operatorname{Id}+  {\mathcal R}_{\eta \varphi}{\mathcal R}_{\eta \psi})$ 
with inverse ${\mathcal R}_{\eta \psi}$.
For the effect of different orders of the operators in the ADMM algorithm see~{\rm \cite{YY2015}}.
\end{remark}

In this paper, we are interested in multiple summands.
We consider
\begin{align} \label{eq:many}
\argmin_{x\in \mathbb R^n} \Bigl\{ \sum_{k=1}^K \varphi_k(x)\Bigr\},
\end{align}
where $\varphi_k\colon \mathbb R^n \rightarrow (-\infty,+\infty]$, $k=1,\ldots,K$, are proper convex lsc functions with \\
$\bigcap_{k=1}^K\ri(\dom \varphi_k)\neq\emptyset$.
The problem can be rewritten in the form~\eqref{drs_split} with only two summands,
namely
\begin{equation}\label{eq:prod_problem}
\argmin_{{\vect{x}}\in \mathbb R^{nK} }
	\bigl\{\Phi(\vect{x})+\iota_{\textsf{D}}(\vect{x}) \bigr\},
\end{equation}
where 
$\Phi (\vect{x})\coloneqq\sum_{k=1}^K \varphi_k(x_k)$, 
$\vect{x}\coloneqq (x_k)_{k=1}^K$, $x_k \in \mathbb R^n$, $k=1,\ldots,K$, and
\[
	\iota_{\textsf{D}} (\vect{x})
	\coloneqq
	\begin{cases}
		0 & \mbox{ if } \vect{x} \in \textsf{D},\\
		\infty & \mbox{ otherwise,}
	\end{cases}
\]
is the indicator function of
\[
\textsf{D}\coloneqq \bigl\{ \vect{x} \in \mathbb R^{nK} \colon x_1 = \ldots = x_K\in \mathbb R^n \bigr\}.
\]
Since ${\textsf{D}}$ is a nonempty closed convex set, its indicator function is
proper convex and lsc.
Further, we have 
\begin{equation}\label{prox_iota_D}
 \prox_{\iota_{\textsf{D}}}(\vect{x}) = \Pi_{\textsf{D}} (\vect{x})
 =
\Big(
	\frac{1}{K} \sum_{k=1}^K x_k, \ldots, \frac{1}{K} \sum_{k=1}^K x_k
 \Big)
 \in \mathbb R^{nK}
\end{equation}
By $\Pi_{\textsf{D}}$ we denote the orthogonal projection onto $\textsf{D}$.
If~\eqref{eq:prod_problem} has a solution, then we can apply the
DR Algorithm~\ref{alg:DR_real} to the special setting
in~\eqref{eq:prod_problem} and obtain Algorithm~\ref{alg:DR_par_real}.
We call this algorithm {\it parallel DR algorithm}. 
In the literature, it is also known as product version of the DR algorithm~\cite{BT2014}, 
parallel proximal point algorithm~\cite{CP08}
or, in a slightly different version, as parallel splitting algorithm~\cite[Proposition 27.8]{BC11}.
For a stochastic version of the algorithm we refer to~\cite{CP2014}.

%
\begin{algorithm}[tbp]
	\caption[]{Parallel DR Algorithm for Real-Valued Data}
	\label{alg:DR_par_real}
	\begin{algorithmic}
		\State \textbf{Input:}  $\vect{t}^{(0)}\in \mathbb R^{nK}$, $\lambda_r \in[0,1]$ with $\sum_{r \in \mathbb N} \lambda_r(1-\lambda_r) = + \infty$, 
		$\eta > 0$
		\State r = 0;
		\Repeat
		\State $\vect{t}^{(r+1)} = \bigl((1-\lambda_r)\operatorname{Id}+ \lambda_r {\mathcal R}_{\eta \Phi} {\mathcal R}_{\iota_{\textsf{D}}}\bigr) \bigl(\vect{t}^{(r)}\bigr)$;
		\State $r\rightarrow r+1$;
		\Until a stopping criterion is reached
	\end{algorithmic}
\end{algorithm}
%

By Theorem~\ref{conv_drs_real}, the convergence of the iterates of Algorithm~\ref{alg:DR_par_real} to a point
${\vect{\hat t}} = (\hat t_1,\ldots, \hat t_K)^\tT \in \mathbb R^{nK}$
is ensured under the assumptions of Theorem~\ref{conv_drs_real}. 
Further, by~\eqref{doch**} and~\eqref{prox_iota_D}, we obtain that 
\[
\hat x \coloneqq \frac{1}{K} \sum_{k=1}^K \hat t_k 
\]
is a solution of~\eqref{eq:many} resp.~\eqref{eq:prod_problem}.

Finally, we want to mention the so-called cyclic DR algorithm.

%
\begin{remark} (Cyclic DR Algorithm) \label{rem:cyclic_drs}
	We consider the cyclic DR algorithm for~\eqref{eq:many}.
	Let $S_k := {\mathcal R}_{\eta \varphi_{k+1}}{\mathcal R}_{\eta \varphi_k}$,
	$k=1,\ldots,K-1$, and 
	$S_K := {\mathcal R}_{\eta \varphi_{1}}{\mathcal R}_{\eta \varphi_K}$.	
	Starting with $t^{(0)} \in \mathbb R^n$
	we compute
	\[
		t^{(r+1)} = T_{[\varphi_1, \ldots,\varphi_K]} t^{(r)},
	\]
	where
	\[
		T_{[\varphi_1, \ldots,\varphi_K]} \coloneqq 
		\tfrac12 \bigl(\operatorname{Id}
			+ S_{K} \bigr)\circ
		\tfrac12 \bigl(\operatorname{Id} 
			+ S_{K-1} \bigr) \circ
		\ldots\circ
		\tfrac12 \bigl(\operatorname{Id}
			+ S_{2} \bigr)\circ
		\tfrac12 \bigl(\operatorname{Id} 
			+ S_{1} \bigr).
	\]
	Applying the operator $T_{[\varphi_1, \ldots,\varphi_K]}$ can be written
	again in the form of a Krasnoselski-Mann iteration~\eqref{it:km_real}
	with operator
	$
	\mu {\operatorname{Id}} + (1-\mu) T
	$
	where $\mu = (\frac{1}{2})^K$ and $T$ is the convex combination
	\[
		\tfrac{1}{2^K-1} \big(S_1 + \ldots
			+ S_K + S_2\circ S_1 + \ldots + S_K\circ S_{K-1} + \ldots + S_K\circ \ldots\circ S_1 \big).
	\]
	Thus it can be shown that the sequence $\bigl\{t^{(r)}\bigr\}_{r \in \mathbb N}$ converges
	to some $\hat t$. However, only for indicator functions $\varphi_k\coloneqq \iota_{{\mathcal C}_k}$
	of closed, convex sets
	${\mathcal C}_k \not = \emptyset$ it was proved
	in~{\rm \cite{BT2014}} that
	$\hat x
		= \prox_{\iota_{{\mathcal C}_k}} (\hat t)
		= \Pi_{{\mathcal C}_k}(\hat t)$
		is a solution
	of~\eqref{eq:many}. In other words, the algorithms finds an element of 
	$\cap_{k=1}^K {\mathcal C}_k$.
  For non-indicator functions $\varphi_k$, there are to the best of our knowledge no similar
	results, i.e., the algorithm converges but the meaning of the limit is not clear.
	The same holds true for an averaged version of
	the DR algorithm, see~{\rm \cite{BT2014}}.
\end{remark}
%

%
\section{Preliminaries on  Hadamard Manifolds} \label{sec:notation}
%
A complete metric space $(\HH,d)$ is called a \emph{Hadamard space} if every two
points $x,y$  are connected by a geodesic and the following condition holds true
\begin{equation}\label{eq:reshet}
d(x,v)^2 + d(y,w)^2 \le d(x,w)^2 + d(y,v)^2 + 2 d(x,y)d(v,w),
\end{equation}
for any $x,y,v,w \in \HH$. 
Inequality~\eqref{eq:reshet} implies that Hadamard spaces have nonpositive
curvature~\cite{Alex51,Re68}. 
In this paper we restrict our attention to Hadamard spaces
which are at the same time finite dimensional Riemannian manifolds with geodesic distance~$d$.

Let $T_x\HH$ be the tangent space at $x$ on $\HH$. 
By $\exp_x\colon T_x\HH \rightarrow \HH$ we denote the exponential map at $x$. 
Then we have
$\exp_x \xi = \gamma_{x,\xi} (1) = y$,
where $\gamma_{x,\xi}:[0,1] \rightarrow \HH$ is the unique geodesic starting at $x$ in direction $\xi$,
i.e., $\gamma_{x,\xi}(0) = x$ and $\dot{\gamma}_{x,\xi}(0)=\xi$.
Likewise the geodesic starting at $x$ and reaching $y$ at time $t=1$ is denoted by $\gamma_{\overset{\frown}{x,y}}$.
Its inverse $\log_x\colon \HH \rightarrow T_x\HH$ is given by
$\log_x y = \xi$, where $\gamma_{x,\xi}(1) = y$.

Hadamard spaces have the nice feature that they resemble convexity properties from Euclidean spaces.
A \emph{set} ${\mathcal C} \subseteq \HH$ is \emph{convex}, if for any $x,y \in \HH$ the
geodesic $\gamma_{\overset{\frown}{x,y}}$ lies in $\mathcal C$.
The intersection of an arbitrary family of convex closed sets is itself a
convex closed set.
A \emph{function} $\varphi\colon \HH \rightarrow (-\infty,+\infty]$ is called \emph{convex} if
for any $x,y \in \HH$ the function $\varphi \circ \gamma_{\overset{\frown}{x,y}}$ is
convex, i.e., we have 
\[
	\varphi\bigl( \gamma_{\overset{\frown}{x,y}}(t) \bigr)
	\le t \varphi\bigl( \gamma_{\overset{\frown}{x,y}} (0) \bigr)
	+ (1-t)\varphi \bigl(\gamma_{\overset{\frown}{x,y}}(1)\bigr), \text{ for all } t \in [0,1].
\]
The function $\varphi$ is
\emph{strictly convex} if the strict inequality holds true for all $0 < t <1$, and
\emph{strongly convex}  with parameter $\kappa > 0$ if for any $x,y \in \HH$ and all $t \in [0,1]$ we have
\[
\varphi\bigl(\gamma_{\overset{\frown}{x,y}}(t) \bigr)
	\le t \varphi\bigl(\gamma_{\overset{\frown}{x,y}} (0) \bigr) 
	+ (1-t)\varphi \bigl(\gamma_{\overset{\frown}{x,y}}(1)\bigr)
	- \kappa t(1-t) d\bigl(\gamma_{\overset{\frown}{x,y}}(0),\gamma_{\overset{\frown}{x,y}}(1)\bigr).
\]
The distance in an Hadamard space fulfills the following properties: 
\begin{enumerate}[label={(D\arabic*)}]
\item\label{item:DandDSq-convex}
	$d\colon {\mathcal H} \times {\mathcal H} \rightarrow \mathbb R_{\ge 0}$ 
	and 
	$d^2\colon {\mathcal H} \times {\mathcal H} \rightarrow \mathbb R_{\ge 0}$
	are convex, and
\item\label{item:DSq-stronly-convex}
	$d^2(\cdot,y)\colon {\mathcal H} \rightarrow \mathbb R_{\ge 0}$
	is strongly convex with $\kappa = 1$.
\end{enumerate}
Concerning minimizers of convex functions the next theorem summarizes some basic facts
which can be found, e.g., in~\cite[Lemma 2.2.19]{B2014} and~\cite[Proposition 2.2.17]{B2014}.

%
\begin{theorem} \label{hadamardmin}
For proper convex lsc functions $\varphi\colon \HH \rightarrow (-\infty,+\infty]$ we have:
\begin{enumerate}[label={\textup{\roman*)}}]
	\item\label{item:pclsc-minimizer}
		If $\varphi(x) \rightarrow +\infty$
		whenever $d(x,x_0) \rightarrow +\infty$ for some $x_0 \in \HH$,
		then $\varphi$ has a minimizer.
	\item\label{item:psclsc-unique}
		If $\varphi$ is strongly convex,
		then there exists a unique minimizer of $\varphi$.
\end{enumerate}
\end{theorem}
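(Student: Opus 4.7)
My plan is to prove (i) by the direct method of the calculus of variations, and to reduce (ii) to (i) by showing that strong convexity implies coercivity, supplemented by a short uniqueness argument.

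For (i), I would pick a minimizing sequence $\{x_n\}$ for $\varphi$ and invoke the coercivity hypothesis to force $\{x_n\} \subset \overline{B(x_0, R)}$ for some $R > 0$. Because $\HH$ is a finite-dimensional Hadamard manifold, the Cartan--Hadamard theorem says $\exp_{x_0}\colon T_{x_0}\HH \to \HH$ is a diffeomorphism, so together with Hopf--Rinow the closed ball $\overline{B(x_0, R)}$ is compact. Extracting a subsequence $x_{n_k} \to \hat x$ and applying lower semicontinuity yields $\varphi(\hat x) \le \liminf_k \varphi(x_{n_k}) = \inf_{\HH}\varphi$. Before claiming $\hat x$ is a minimizer I must also rule out $\inf_{\HH}\varphi = -\infty$: otherwise a minimizing sequence with $\varphi(x_n) \to -\infty$ would still be bounded by coercivity, and the same compactness/lsc argument would give a point at which $\varphi$ takes the value $-\infty$, contradicting properness.

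For (ii), uniqueness comes from a standard strict midpoint inequality: if $\hat x_1 \neq \hat x_2$ were both minimizers, evaluating the strong convexity inequality at $t = \tfrac12$ on the geodesic $\gamma_{\overset{\frown}{\hat x_1,\hat x_2}}$ gives a point at which $\varphi$ is strictly less than $\inf_{\HH} \varphi$, a contradiction. For existence I would verify that strong convexity implies the coercivity hypothesis of (i), so that (i) can be applied. Fixing any $x_0 \in \dom \varphi$, suppose for contradiction there were $\{y_n\}$ with $d(x_0, y_n) \to \infty$ and $\varphi(y_n) \le C$. Set $t_n \coloneqq 1/d(x_0, y_n)$ and $z_n \coloneqq \gamma_{\overset{\frown}{x_0,y_n}}(t_n)$, so $\{z_n\}$ lies in the compact set $\overline{B(x_0, 1)}$. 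Strong convexity then yields
\[
\varphi(z_n) \le (1-t_n)\varphi(x_0) + t_n\,\varphi(y_n) - \kappa\, t_n(1-t_n)\, d(x_0, y_n)^2,
\]
and with the chosen $t_n$ the last term equals $-\kappa(1-t_n)\, d(x_0, y_n) \to -\infty$, forcing $\varphi(z_n) \to -\infty$. Compactness extracts $z_{n_k} \to \hat z \in \overline{B(x_0,1)}$ and lsc then gives $\varphi(\hat z) = -\infty$, contradicting properness. Hence $\varphi$ is coercive and (i) produces a minimizer, which by the uniqueness argument is the only one.

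The one delicate step is the coercivity reduction in (ii): the interpolation parameter $t_n$ must be chosen to keep $z_n$ in a fixed compact ball around $x_0$ while still letting the strong-convexity penalty $t_n(1-t_n)\,d(x_0,y_n)^2$ diverge. The scaling $t_n \sim 1/d(x_0, y_n)$ is precisely tuned for this balance; any slower decay lets $z_n$ escape to infinity, any faster decay kills the strong-convexity bonus. Apart from that, the argument is the textbook direct method once one has Cartan--Hadamard together with Hopf--Rinow to guarantee compactness of closed bounded sets in $\HH$.
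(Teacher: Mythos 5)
Your argument is correct, but it takes a genuinely different route from the paper, which in fact gives no proof of Theorem~\ref{hadamardmin} at all and simply quotes it from \cite{B2014} (Lemma~2.2.19 and Proposition~2.2.17), where both statements are established for general, possibly infinite-dimensional Hadamard spaces: there, part~i) is obtained via weak convergence and weak lower semicontinuity of convex lsc functions (bounded sequences admit weakly convergent subsequences), and part~ii) is proved directly by showing that a minimizing sequence of a strongly convex function is Cauchy, without any appeal to i). Your proof instead exploits the paper's standing assumption that $\HH$ is a finite-dimensional Riemannian Hadamard manifold: completeness and Hopf--Rinow (Cartan--Hadamard is not even needed for this step) make closed balls compact, so the direct method yields i), and your reduction of ii) to i) through the coercivity estimate with $t_n = 1/d(x_0,y_n)$ is a clean, elementary alternative to the Cauchy-sequence argument. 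What your approach buys is self-containedness and simplicity; what it gives up is validity beyond locally compact spaces, which is irrelevant in this paper. The details are complete: you correctly exclude $\inf_{\HH}\varphi = -\infty$, and the strict midpoint inequality settles uniqueness.

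One point you should make explicit: your coercivity estimate uses strong convexity with penalty $\kappa\, t(1-t)\, d^2$ (squared distance), whereas the paper's displayed definition literally has $\kappa\, t(1-t)\, d$. The squared form is the standard one and is clearly what is intended, since property~\ref{item:DSq-stronly-convex} asserts that $d^2(\cdot,y)$ is strongly convex with $\kappa = 1$, which is false under the linear-penalty reading. Under the paper's literal wording, however, your penalty term would be $\kappa(1-t_n)\,d(x_0,y_n)\cdot t_n = \kappa(1-t_n)$, which stays bounded and does not force $\varphi(z_n)\to-\infty$; so state that you are invoking the squared-distance definition (as in \cite{B2014}) rather than the one printed in the paper.
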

%

The \emph{subdifferential} of $\varphi\colon \HH \rightarrow (-\infty,+\infty]$
at $x \in \dom \varphi$ is defined by 
\[
	\partial \varphi (x)
	\coloneqq
	\bigl\{
		\xi \in T_x\HH\colon \varphi(y) \ge \varphi(x) 
			+ \langle \xi,\dot \gamma_{\overset{\frown}{x,y}}(0) \rangle
			\text{ for all } y \in \dom \varphi
	\bigr\},
\]
see, e.g.,~\cite{LMWY2011} or~\cite{Udriste94} for finite functions $\varphi$.
For any $x \in \interior (\dom \varphi)$, the subdifferential is a nonempty convex and compact set in $T_x\HH$.
If the Riemannian gradient $\nabla \varphi(x)$ of $\varphi$ in $x \in \HH$ exists, 
then $\partial \varphi (x) = \bigl\{\nabla \varphi(x)\bigr\}$. 
Further we see from the definition that $x \in \HH$ is a global minimizer of $\varphi$ if and only if $0 \in \partial \varphi (x)$.
The following theorem was proved in~\cite{LMWY2011} for general finite dimensional Riemannian manifolds.
%
\begin{theorem} \label{th:subdiff_calculus}
Let $\varphi,\psi\colon \HH \rightarrow (-\infty,+\infty]$ be proper and convex. 
Let $x \in \interior (\dom \varphi) \cap \dom \psi$.
Then we have the subdifferential sum rule
\[
	\partial(\varphi + \psi)(x) = \partial \varphi (x) + \partial \psi (x).
\]
In particular, for a convex function $\varphi$ 
and a nonempty convex set ${\mathcal C}$ such that
${\mathcal C}\cap \dom \varphi \not = \emptyset$ is convex, it follows 
for $x \in \interior (\dom \varphi) \cap {\mathcal C}$ that
\[
	\partial(\varphi + \iota_{{\mathcal C}})
	= \partial \varphi (x) + N_{{\mathcal C}}(x),
\]
where the normal cone $N_{{\mathcal C}}(x)$ of ${\mathcal C}$
at $x \in {\mathcal C}$ is defined by
\[
	N_{{\mathcal C}} (x)
	\coloneqq \bigl\{
		\xi \in T_x \HH\colon\langle \xi,\log_xc \rangle \le 0
		\text{ for all } c \in \mathcal C
	\bigr\}.
\]
\end{theorem}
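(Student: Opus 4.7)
The inclusion $\partial\varphi(x) + \partial\psi(x) \subseteq \partial(\varphi+\psi)(x)$ is immediate from the definition of the subdifferential: given $\xi_1 \in \partial\varphi(x)$ and $\xi_2 \in \partial\psi(x)$, adding the two defining inequalities for an arbitrary $y \in \dom\varphi \cap \dom\psi$ along the geodesic $\gamma_{\overset{\frown}{x,y}}$ yields $\xi_1 + \xi_2 \in \partial(\varphi+\psi)(x)$. I would therefore devote the main effort to the reverse inclusion, and then derive the indicator-function special case as a corollary.

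\textbf{The substantive inclusion.}
Given $\xi \in \partial(\varphi+\psi)(x)$, my plan is to pass to the tangent space $T_x\HH$, which is a finite-dimensional inner product space, and produce the decomposition via a Hahn-Banach separation. The subgradient inequality for $\xi$ rewrites as $\langle \xi, \log_x y\rangle - \bigl(\varphi(y) - \varphi(x)\bigr) \leq \psi(y) - \psi(x)$ for every $y \in \dom\varphi \cap \dom\psi$. I would consider the two sets in $T_x\HH \times \RR$ given by $A \coloneqq \{(\log_x y, s) : y \in \dom\varphi,\ s \leq \langle\xi,\log_x y\rangle - \varphi(y) + \varphi(x)\}$ and $B \coloneqq \{(\log_x y, s) : y \in \dom\psi,\ s > \psi(y) - \psi(x)\}$. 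Passing to their convex hulls in $T_x\HH \times \RR$, the above inequality forces disjoint interiors, and the hypothesis $x \in \interior(\dom\varphi)$ guarantees that the hull of $A$ has non-empty interior. Separating them by a closed hyperplane produces an affine functional of the form $(\eta,t) \mapsto \langle \xi_2,\eta\rangle - t$ with $\xi_2 \in T_x\HH$; reading off the separation on $B$ gives $\langle\xi_2,\log_x y\rangle \leq \psi(y) - \psi(x)$ for all $y \in \dom\psi$, i.e.\ $\xi_2 \in \partial\psi(x)$, whereupon $\xi_1 \coloneqq \xi - \xi_2 \in \partial\varphi(x)$ as required.

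\textbf{Indicator special case and main obstacle.}
For the second assertion, $\iota_{{\mathcal C}}$ is proper, convex and lsc since ${\mathcal C}$ is nonempty closed convex, so the sum rule applies directly. From the definition of $\partial$ one checks that $\xi \in \partial \iota_{{\mathcal C}}(x)$ is equivalent to $\langle\xi,\log_x y\rangle \leq 0$ for every $y \in {\mathcal C}$, i.e.\ $\xi \in N_{{\mathcal C}}(x)$. The principal obstacle in the plan above lies in the convex-hull step: on a curved Hadamard manifold, $\varphi \circ \exp_x$ need not be convex on $T_x\HH$ in the Euclidean sense (it is only convex along rays from the origin, since those rays are precisely the geodesics through $x$), so $A$ and $B$ are not a priori convex. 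A clean way around this, used in~\cite{LMWY2011}, is to work not with $\varphi$ and $\psi$ themselves but with their directional derivatives $\varphi'(x;\cdot)$ and $\psi'(x;\cdot)$ on $T_x\HH$; these are genuinely positively homogeneous convex functions in the Euclidean sense and their epigraphs can be separated by the classical Moreau-Rockafellar theorem, which then translates back to the decomposition of $\xi$.
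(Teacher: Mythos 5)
The paper itself contains no proof of this theorem: it is quoted from~\cite{LMWY2011}, so there is no internal argument to compare with, and your proposal has to stand on its own. Judged that way, it has a genuine gap, and it sits exactly where you place your ``principal obstacle''. The main construction — separating the sets $A$ and $B$ in $T_x\HH\times\RR$ — does not work as written: the subgradient inequality only makes $A$ and $B$ disjoint, and passing to convex hulls does not preserve disjointness, because $\varphi\circ\exp_x$ and $\psi\circ\exp_x$ are convex merely along rays through the origin, so the inequality controls points of the form $(\log_x y,s)$ but says nothing about their convex combinations. Hence the claim that ``the above inequality forces disjoint interiors'' of the hulls is unjustified and the separation step collapses — as you concede in your final paragraph.

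The fallback you then sketch, working with the directional derivatives $\varphi'(x;\cdot)$ and $\psi'(x;\cdot)$ on $T_x\HH$ and applying the Euclidean Moreau--Rockafellar theorem, is indeed the standard route, but the assertion that these are ``genuinely positively homogeneous convex functions in the Euclidean sense'' is precisely the nontrivial point where the curvature enters, and you state it rather than prove it. For fixed $t>0$ the difference quotient $v\mapsto t^{-1}\bigl(\varphi(\exp_x(tv))-\varphi(x)\bigr)$ suffers from the same non-convexity as before; convexity of the limit has to be extracted, e.g.\ from local Lipschitz continuity of $\varphi$ at the interior point $x$ combined with the estimate $d\bigl(m_t,\exp_x(\tfrac{t}{2}(u+v))\bigr)=o(t)$, where $m_t$ is the geodesic midpoint of $\exp_x(tu)$ and $\exp_x(tv)$. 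For $\psi$ the situation is worse: $x$ may lie on the boundary of $\dom\psi$, no Lipschitz bound is available, and it is not even clear a priori that the set of directions $v$ with $\psi'(x;v)<\infty$ is convex, since geodesic convexity of $\dom\psi$ does not transfer to $\exp_x^{-1}(\dom\psi)$. Either these facts are proved (this is essentially the content of the calculus developed in~\cite{LMWY2011}) or the citation has to carry the whole argument, in which case your proposal reproduces the paper's reference rather than a proof. The easy inclusion $\partial\varphi(x)+\partial\psi(x)\subseteq\partial(\varphi+\psi)(x)$ and the identification $\partial\iota_{\mathcal C}(x)=N_{\mathcal C}(x)$ are fine.
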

%

We will deal with the product space $\HH^n$ of a Hadamard manifold $\HH$
with distance
\begin{equation} \label{prod_dist}
d(x,y)\coloneqq \Big( \sum_{k=1}^n d(x_k,y_k)^2 \Big)^\frac12
\end{equation}
which is again a Hadamard manifold.

For $\eta > 0$ and a proper convex lsc function
$\varphi\colon\HH \rightarrow [-\infty,+\infty]$, the \emph{proximal mapping} 
\begin{equation} \label{eq:prox_mani}
\prox_{\eta \varphi }(x)
\coloneqq \argmin_{y \in {\HH^n}} \bigl\{ \tfrac{1}{2}  d (x,y) ^2
	+ \eta\varphi(y)\bigr\}
\end{equation}
exists and is uniquely determined, see~\cite{Jost,Mayer}. 

To introduce a DR algorithm we have to define reflections on manifolds.
A mapping $R_p\colon{\mathcal M} \rightarrow {\mathcal M}$ 
on a Riemannian manifold ${\mathcal M}$
is called \emph{geodesic reflection} at $p \in {\mathcal M}$, if
\begin{equation}
	R_p(p) = p, \quad \text{and} \quad D_p(R_p) = -I,
\end{equation}
where $D_p(R_p)$ denotes the differential of $R_p$ at $p \in {\mathcal M}$.
For any $x,p$ on a Hadamard manifold $\HH^n$ we
can write the reflection as
\[
R_p(x) = \exp_p(-\log_px).
\]
The {\it reflection of a proper convex lsc function} $\varphi\colon\HH^n \rightarrow (-\infty,+\infty]$ is given	
by ${\mathcal R}_\varphi\colon \mathcal{H}^n \rightarrow  \mathcal{H}^n$ with
\begin{equation} \label{reflection_hada}
{\mathcal R}_\varphi (x) = \exp_{\prox_{ \varphi }(x)}\bigl(-\log_{\prox_{ \varphi }(x)}(x)\bigr).
\end{equation}

Finally a connected Riemannian manifold ${\mathcal M}$ is called (globally) \emph{symmetric},
if the geodesic reflection at any point $p \in {\mathcal M}$
is an isometry of ${\mathcal M}$, i.e., for all $x,y \in {\mathcal M}$  we have
\begin{equation}
	d\bigl(R_p(x),R_p(y) \bigr) = d(x,y).
\end{equation}
Examples are spheres, connected compact Lie groups, 
Grassmannians, hyperbolic spaces and symmetric positive definite matrices.
The latter two manifolds are symmetric Hadamard manifolds.
For more information we refer to~\cite{E97,H94}.
%
\section{Parallel DR Algorithm on Hadamard Manifolds} \label{sec:hadamard}
%
In this section, we generalize the parallel DR algorithm to
Hadamard manifolds. Given two proper convex lsc functions
$\varphi,\psi\colon \HH^n \rightarrow (-\infty,+\infty]$ the DR algorithm aims to
solve
\begin{equation}\label{eq:dr_split_had}
 \argmin_{x \in \mathbb \HH^n} \bigl\{ \varphi(x) + \psi(x) \bigr\}.
\end{equation}
%
\begin{algorithm}[t]
	\caption[]{DR Algorithm for Data in Hadamard Manifolds} 
	\label{alg:DR_had}
	\begin{algorithmic}
		\State \textbf{Input:}
		$t^{(0)}\in\HH^n$,
		$\lambda_r \in[0,1]$
		with
		$\sum_{r \in \mathbb N} \lambda_r(1-\lambda_r) = + \infty$, 
		$\eta > 0$
		\State \( r = 0 \);
		\Repeat
		\State $s^{(r)} = {\mathcal R}_{\eta \phi} {\mathcal R}_{\eta \psi} \bigl( t^{(r)} \bigr)$
		\State $t^{(r+1)} = \gamma_{\overset{\frown}{t^{(r)},s^{(r)}}} (\lambda_r)$
		\State $r\rightarrow r+1$;
		\Until a stopping criterion is reached
	\end{algorithmic}
\end{algorithm}
%
Adapting Algorithm~\ref{alg:DR_real} to the manifold-valued setting
yields Algorithm~\ref{alg:DR_had}.
If the iterates of Algorithm~\ref{alg:DR_had} converge to a fixed
point~$\hat t \in \HH^n$, then we will see in the next theorem
that
$
\hat x\coloneqq \prox_{\eta\psi} (\hat t)
$
is a solution of~\eqref{eq:dr_split_had}.

%
\begin{theorem}\label{th:soln}
	Let $\HH$ be a Hadamard manifold and
	$\varphi,\psi\colon\HH^n \rightarrow \RR$ proper, convex, lsc functions
	such that 
	$\interior (\dom \varphi) \cap \dom \psi \not = \emptyset$.
	Assume that a solution of~\eqref{eq:dr_split_had} exists. 
	Then, for each solution $\hat x$ of~\eqref{eq:dr_split_had} there exists
	a fixed point $\hat t$ of $\mathcal{R}_{\eta \varphi}\mathcal{R}_{\eta \psi}$
	such that 
	\[
		\hat x = \prox_{\eta \psi} \hat t.
	\] 
	Conversely, if a fixed point $\hat t$
	of $\mathcal{R}_{\eta \varphi} \mathcal{R}_{\eta \psi}$ exists,
	then $\hat x$ defined as above is a solution of~\eqref{eq:dr_split_had}.
\end{theorem}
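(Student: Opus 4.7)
The overall strategy mirrors the classical Hilbert space argument via the characterization of the proximal map by the subdifferential, together with the subdifferential sum rule provided by Theorem \ref{th:subdiff_calculus}. The crucial ingredient is the Riemannian optimality condition for the proximal problem \eqref{eq:prox_mani}: since the gradient of $y \mapsto \tfrac{1}{2}d(t,y)^2$ at $y = x$ equals $-\log_x t$, the subdifferential sum rule gives
\begin{equation}
x = \prox_{\eta\psi}(t) \quad \Longleftrightarrow \quad \log_x t \in \eta \,\partial\psi(x),
\end{equation}
and analogously for $\varphi$. The hypothesis $\interior(\dom\varphi)\cap\dom\psi \neq \emptyset$ is what legitimizes the application of the sum rule in Theorem \ref{th:subdiff_calculus}. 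With this equivalence in hand, both directions are essentially a transcription of the Euclidean proof using $\log$ and $\exp$ in place of vector arithmetic.

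For the forward direction, given a solution $\hat x$ of \eqref{eq:dr_split_had}, I would use Theorem \ref{hadamardmin} together with the sum rule to obtain $0 \in \partial\varphi(\hat x) + \partial\psi(\hat x)$, so there exists $\xi \in T_{\hat x}\HH^n$ with $\xi \in \partial\varphi(\hat x)$ and $-\xi \in \partial\psi(\hat x)$. Set $\hat t \coloneqq \exp_{\hat x}(-\eta\xi)$; then $\log_{\hat x}\hat t = -\eta\xi \in \eta\partial\psi(\hat x)$, so $\prox_{\eta\psi}(\hat t) = \hat x$ and hence $\hat s \coloneqq \mathcal{R}_{\eta\psi}(\hat t) = \exp_{\hat x}(\eta\xi)$. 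Since $\log_{\hat x}\hat s = \eta\xi \in \eta\partial\varphi(\hat x)$, we also have $\prox_{\eta\varphi}(\hat s) = \hat x$, and therefore $\mathcal{R}_{\eta\varphi}(\hat s) = \exp_{\hat x}(-\eta\xi) = \hat t$. This shows $\hat t$ is a fixed point of $\mathcal{R}_{\eta\varphi}\mathcal{R}_{\eta\psi}$ with $\hat x = \prox_{\eta\psi}(\hat t)$.

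For the converse, let $\hat t$ be a fixed point and define $\hat x \coloneqq \prox_{\eta\psi}(\hat t)$, $\hat s \coloneqq \mathcal{R}_{\eta\psi}(\hat t)$, and $\hat y \coloneqq \prox_{\eta\varphi}(\hat s)$. By the definition \eqref{reflection_hada} of the reflections, $\hat x$ is the midpoint of the geodesic from $\hat t$ to $\hat s$, while the fixed point relation $\mathcal{R}_{\eta\varphi}(\hat s) = \hat t$ says that $\hat y$ is the midpoint of the geodesic from $\hat s$ to $\hat t$. Uniqueness of midpoints in the Hadamard manifold $\HH^n$ forces $\hat y = \hat x$. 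Applying the subdifferential characterization of the proximal maps then yields $\log_{\hat x}\hat t \in \eta\partial\psi(\hat x)$ and $\log_{\hat x}\hat s = -\log_{\hat x}\hat t \in \eta\partial\varphi(\hat x)$. Summing these containments and invoking Theorem \ref{th:subdiff_calculus} gives $0 \in \partial(\varphi+\psi)(\hat x)$, so $\hat x$ solves \eqref{eq:dr_split_had}.

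The main obstacle is purely technical: ensuring that the subdifferential sum rule applies in every place it is used, and verifying that the Riemannian analogue $\log_x t \in \eta\partial\psi(x)$ of the resolvent identity $t-x \in \eta\partial\psi(x)$ really does characterize $x = \prox_{\eta\psi}(t)$. Once these two ingredients are secured, the rest is a direct translation of the Euclidean argument in which vector differences $t-x$ and sums $2x-t$ are replaced, respectively, by $\log_x t$ and $\exp_x(-\log_x t)$, and the uniqueness of midpoints in $\HH^n$ plays the role that was implicitly filled by linearity in $\RR^n$.
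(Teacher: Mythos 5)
Your proposal is correct and follows essentially the same route as the paper's proof in Appendix~\ref{app:theo}: both rest on the characterization $x = \prox_{\eta\psi}(t) \Leftrightarrow \log_x t \in \eta\,\partial\psi(x)$ obtained from Theorem~\ref{th:subdiff_calculus}, split $0 \in \partial\varphi(\hat x)+\partial\psi(\hat x)$ into two opposite subgradients to build the fixed point, and use uniqueness of geodesics (midpoints) for the converse. No substantive differences.
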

%

The proof is given in Appendix~\ref{app:theo}.

Next we are interested in the minimization of functionals containing
multiple summands,
\begin{align} \label{eq:many_had}
\argmin_{x\in \HH^n} \Bigl\{ \sum_{k=1}^K \varphi_k(x) \Bigl\},
\end{align}
where $\varphi_k\colon \HH^n \rightarrow (-\infty,+\infty]$, $k=1,\ldots,K,$ are proper convex lsc functions.
As in the Euclidean case the problem can be rewritten as
\begin{equation}\label{eq:prod_problem_had}
	\argmin_{x\in\HH^{nK}} \bigl\{\Phi(\vect{x})+\iota_{\textsf{D}}(\vect{x}) \bigr\},
\end{equation}
where 
$\Phi (\vect{x})\coloneqq\sum_{k=1}^K \varphi_k(x_k)$, 
$\vect{x}\coloneqq (x_k)_{k=1}^K,$ 
and
\[
{\textsf{D}}\coloneqq\{\vect{x}\in\HH^{nK} \colon x_1=\dots=x_n\in\HH^n\}.
\]
Obviously, ${\textsf{D}}$ is a nonempty, closed convex set so that its
indicator function is proper convex and lsc, see~\cite[p. 37]{B2014}.
Further, we have for any $\eta > 0$ and
$\vect{x} = (x_1,\ldots,x_K)^\tT \in \HH^{nK}$ that
\begin{equation} \label{prox_iota_D_had}
\prox_{\iota_{\textsf{D}}}( \vect{x})
 = \Pi_{\textsf{D}}(\vect{x})
 = \Big(
	\argmin_{x \in \HH^n} \sum_{k=1}^K d(x_k,x)^2, \ldots, \argmin_{x \in \HH^n} \sum_{k=1}^K d(x_k,x)^2
	\Big)
	\in\HH^{nK}
	.
 \end{equation}
The minimizer of the sum is a so-called \emph{Karcher mean}, Fr{\'e}chet mean
or Riemannian center of mass~\cite{Karcher1977}.
It can be efficiently computed on Riemannian manifolds using a gradient descent
algorithm as investigated in~\cite{ATV13} or
on Hadamard manifolds by employing, e.g., a cyclic proximal point algoritm algorithm,
see~\cite{Bac14}.

The parallel DR algorithm for data in a symmetric Hadamard manifold is given by
Algorithm~\ref{alg:DR_par_had}.
%
\begin{algorithm}[t]
	\caption[]{Parallel DR Algorithm for Data in Symmetric Hadamard Manifolds} 
	\label{alg:DR_par_had}
	\begin{algorithmic}
		\State \textbf{Input:}  $\vect{t}^{(0)}\in  \HH^{nK}$, $\lambda_r \in[0,1]$ with $\sum_{r \in \mathbb N} \lambda_r(1-\lambda_r) = + \infty$, 
		$\eta > 0$
		\State r = 0;
		\Repeat
		\State $\vect{s}^{(r)} = {\mathcal R}_{\eta \Phi} {\mathcal R}_{\iota_{\textsf{D}}}\bigl( \vect{t}^{(r)} \bigr)$
		\State $\vect{t}^{(r+1)} = \gamma_{\overset{\frown}{\vect{t}^{(r)},\vect{s}^{(r)}}} (\lambda_r)$
		\State $r\rightarrow r+1$;
		\Until a stopping criterion is reached
	\end{algorithmic}
\end{algorithm}
%
If the sequence $\{\vect{t}^{(r)}\}_{r \in \mathbb N}$ converges to
some $\vect{\hat t} = (\hat t_1,\ldots,\hat t_K)^\tT$, then we know by Theorem \ref{th:soln} that
\begin{equation}\label{eq:algDRparhad-x}
	\hat x\coloneqq  \argmin_{x \in \HH^n} \sum_{k=1}^K d(\hat t_k,x)^2
\end{equation}
is a solution of~\eqref{eq:many_had}.

%
\begin{remark} (Cyclic DR Algorithm) \label{rem:cyclic_drs_had}
	Since the cyclic DR iterations can be written similarly to
	Remark~\ref{rem:cyclic_drs} as a Krasnoselski-Mann iteration we have for nonexpansive operators
	$R_{\eta \varphi_k}$, that cyclic DR algorithm converges to some fixed point.
	However, the relation of the fixed point to a solution of the minimization
	problem is, as in the Euclidean case with non indicator functions, completely unknown. 
\end{remark}
%

It remains to examine under which conditions the sequence $\{\vect{t}^{(r)}\}_{r \in \mathbb N}$
converges.
Setting $T\coloneqq\mathcal{R}_{\eta \varphi} \mathcal{R}_{\eta \psi}$, the
DR iteration can be seen as a \emph{Krasnoselski–Mann iteration} given
by
\begin{equation}\label{eq:krasnoleski}
 t^{(r+1)}\coloneqq \gamma_{\overset{\frown}{t^{(r)},T(t^{(r)})}} (\lambda_r). 
\end{equation}
The following theorem on the convergence of Krasnoselski–Mann iterations on
Hadamard spaces was proved in~\cite{K2013}, see also~\cite[Theorem 6.2.1]{B2014}.
%
\begin{theorem} \label{th:kras}
	Let $(\HH,d)$ be a finite dimensional Hadamard space and
	$T\colon\HH\rightarrow\HH$ a nonexpansive mapping with nonempty fixed point set.
	Assume that $(\lambda_r)_{r\in\NN}$
	satisfies $\sum_{r\in\NN} \lambda_r(1-\lambda_r)=\infty$. 
	Then the sequence $\{t^{(r)}\}_{k\in\NN}$ generated by the 
	Krasnoselski–Mann iterations~\eqref{eq:krasnoleski} converges
	for any starting point  $t^{(0)}\in\HH$ to some fixed point $\hat t$ of $T$.
\end{theorem}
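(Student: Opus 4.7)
The plan is to mimic the classical argument for Krasnoselski–Mann iterations in Hilbert space: first establish Fej\'er monotonicity with respect to the fixed point set $\mathrm{Fix}(T)$, then deduce asymptotic regularity from the step-size condition, and finally extract a subsequential limit that will turn out to be a fixed point. The CAT(0) analogue of the parallelogram identity, which is precisely property~(D2) of the Hadamard distance, is what replaces the Hilbert-space identity at every stage.

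For the Fej\'er estimate I would pick any $\hat t\in\mathrm{Fix}(T)$ and apply the strong convexity of $d^2(\cdot,\hat t)$ with $\kappa=1$ along the geodesic $\gamma_{\overset{\frown}{t^{(r)},T(t^{(r)})}}$ at parameter $\lambda_r$; combined with $d(T(t^{(r)}),\hat t)\le d(t^{(r)},\hat t)$ (which follows from nonexpansiveness together with $T\hat t=\hat t$), this yields
\[
d(t^{(r+1)},\hat t)^2 \;\le\; d(t^{(r)},\hat t)^2 \;-\; \lambda_r(1-\lambda_r)\,d(t^{(r)},T(t^{(r)}))^2.
\]
Hence $\{d(t^{(r)},\hat t)\}_{r\in\NN}$ is non-increasing, so in particular $\{t^{(r)}\}_{r\in\NN}$ is bounded, and telescoping gives $\sum_{r\in\NN}\lambda_r(1-\lambda_r)\,d(t^{(r)},T(t^{(r)}))^2<\infty$. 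Combined with the divergence hypothesis $\sum_r\lambda_r(1-\lambda_r)=\infty$, there exists a subsequence $\{r_k\}$ along which $d(t^{(r_k)},T(t^{(r_k)}))\to 0$.

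To close the argument I would use that $\HH$ is a finite-dimensional Hadamard manifold, so by Hopf–Rinow closed bounded subsets are compact; this lets me extract a further subsequence $\{t^{(r_{k_j})}\}$ converging to some $\tilde t\in\HH$. Nonexpansiveness of $T$ implies continuity, so $T(t^{(r_{k_j})})\to T(\tilde t)$, and via the triangle inequality together with the asymptotic regularity just established this forces $\tilde t=T(\tilde t)$, i.e.\ $\tilde t\in\mathrm{Fix}(T)$. Applying Fej\'er monotonicity once more with $\hat t=\tilde t$, the entire sequence $\{d(t^{(r)},\tilde t)\}_{r\in\NN}$ is non-increasing, and since a subsequence already tends to $0$ the whole sequence must converge to $\tilde t$.

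The genuinely non-Euclidean ingredient is the first displayed inequality, which rests on property~(D2) rather than a Hilbert-space identity; modulo this substitution, the manifold setting adds no difficulty. The real obstacle, and the reason the finite-dimensionality hypothesis appears in the statement, would arise in an infinite-dimensional CAT(0) space: Fej\'er monotonicity there yields only a $\Delta$-convergent subsequence, and one must invoke an Opial-type demiclosedness principle to identify the $\Delta$-limit as a fixed point of $T$. Finite-dimensionality sidesteps this entirely via Hopf–Rinow and reduces the endgame to plain compactness plus continuity of $T$.
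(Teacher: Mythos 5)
Your argument is correct, and it is essentially the standard proof of this result: the paper itself does not prove Theorem~\ref{th:kras} but cites \cite{K2013} and \cite[Theorem 6.2.1]{B2014}, where the same Fej\'er-monotonicity scheme (the inequality you display is exactly the specialization of Lemma~\ref{est_bacak}, i.e.\ \eqref{formel_w1} with $y_0=y_1=\hat t$ and $s=t=\lambda_r$, combined with nonexpansiveness) is the engine of the proof. Your shortcut of extracting only a subsequence along which $d\bigl(t^{(r_k)},T(t^{(r_k)})\bigr)\to 0$, rather than proving full asymptotic regularity, is legitimate because Fej\'er monotonicity with respect to the limit point upgrades subsequential convergence to convergence of the whole sequence, and Hopf--Rinow supplies the needed compactness in the finite-dimensional (locally compact) setting the paper works in.
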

%
Hence, if the operator $T = \mathcal{R}_{\eta \Phi}\mathcal{R}_{\iota_{\textsf{D}}}$ is
nonexpansive the convergence of the parallel DR algorithm to some fixed point is
ensured. Obviously, this is true if  $\mathcal{R}_{\eta \varphi_k}$, $k=1,\ldots,K$ 
and $\mathcal{R}_{\iota_{\textsf{D}}}$ are nonexpansive.
Unfortunately, the result from the Euclidean space that proper convex lsc functions 
produce nonexpansive reflections does not carry over the Hadamard manifold setting.
However, we show in the following sections that the parallel DR algorithm is well suited for finding the
minimizer of the ROF-like functional on images with values in a symmetric Hadamard manifold.

%
\section{Application of the DR Algorithm to ROF-like Functionals}  \label{sec:rof}
%
One of the most frequently used variational models for restoring noisy images or images with
missing pixels is the ROF model~\cite{ROF92}. 
In a discrete,  anisotropic setting this model is given by~\eqref{rof_real}.
A generalization of the ROF model to images having cyclic values was proposed in~\cite{SC11} 
and to images with pixel values in a manifolds in~\cite{LSKC13,WDS2014}.
The model looks as follows:
let
${\mathcal G} \coloneqq \{1,\ldots,N\} \times \{1,\ldots,M\}$
be the image grid and
$\emptyset \not = {\mathcal V} \subseteq {\mathcal G}$.
Here \(\mathcal V\) denotes the set of available, in general noisy
pixels. In particular, we have ${\mathcal V} = {\mathcal G}$ in the case 
of no missing pixels.
Given an image
$f\colon {\mathcal V} \rightarrow {\HH}$
which is corrupted by noise or missing pixels,
we want to restore the original image 
$u_0\colon {\mathcal G} \rightarrow {\HH}$
as a minimizer of the functional
\begin{equation}\label{task}
	\begin{split}
{\mathcal E}(u) &= \mathcal D(u; f) + \alpha\TV(u)\\
 &\coloneqq\frac{1}{2}
		 \sum_{(i,j) \in {\mathcal V}} d (f_{i,j},u_{i,j})^2
	+ \alpha 
	\biggl(
		\sum_{(i,j) \in {\mathcal G}} d (u_{i,j},u_{i+1,j}) 
		+\sum_{(i,j) \in {\mathcal G}} d (u_{i,j},u_{i,j+1})
	\biggr)
	\end{split}
\end{equation}
where $\alpha >0$ and we assume mirror boundary conditions, i.e.,
$d(u_{i,j},u_{i+1,j}) = 0$ if $(i+1,j) \not \in {\mathcal G}$ and
$d(u_{i,j},u_{i,j+1}) = 0$ if $(i,j+1) \not \in {\mathcal G}$.
By property~\ref{item:DandDSq-convex} the functional ${\mathcal E}$ is convex.
If ${\mathcal V}\not = \emptyset$ then, regarding that the TV regularizer has only
constant images in its kernel, 
we see by Theorem~\ref{hadamardmin}\,\ref{item:pclsc-minimizer} that
${\mathcal E}$ has a global minimizer.
If ${\mathcal V} = {\mathcal G}$, then, by~\ref{item:DSq-stronly-convex} the functional
is strongly convex and has  a unique minimizer by
Theorem~\ref{hadamardmin}\,\ref{item:psclsc-unique}.

Reordering the image columnwise into a vector of length $n \coloneqq NM$ we can consider
$f$ and $u$ as elements in the product space $\HH^n$ which fits into the notation of the previous section.
However, it is more convenient to keep the 2D formulation here.
Denoting the data fidelity term by 
\[
\varphi_1 (u) = D(u;f) = \sum_{(i,j) \in {\mathcal V}} \tfrac12 d(f_{i,j},u_{i,j})^2
\]
and splitting the regularizing term \(\TV(u)\) by grouping the odd and even indices with respect to both image dimensions
as
\begin{align} \label{eq:splitTVevodd}
	 \alpha \TV(u)
	&=	
\alpha \sum_{\nu_1=0}^1 \sum_{i,j=1}^{\bigl\lfloor\!\frac{N-\nu_1}{2}\!\bigr\rfloor,M}
		 d (u_{2i-1+\nu_1,j},u_{2i+\nu_1,j})
\\&\qquad+ \alpha \sum_{\nu_2=0}^1
	 \sum_{i,j=1}^{N,\bigl\lfloor\!\frac{M-\nu_2}{2}\!\bigr\rfloor}
		 d (u_{i,2j-1+\nu_2},u_{i,2j+\nu_2}) \\
	&= \sum_{k=2}^5 \varphi_k(u)
\end{align}
our functional becomes
\begin{align}\label{task_sum_TV}
{\mathcal E}(u) = \sum_{k=1}^5 \varphi_k(u).
\end{align}
This has exactly the form \eqref{eq:many_had} with $K=5$. We want to apply the parallel DR algorithm.
To this end we have to compute the proximal mappings of the $\varphi_k$, $k=1,\ldots,K$.
The reason for the above special splitting is that every pixel appears in each
functional \(\varphi_k\) at most in one summand so that the proximal values can be computed componentwise.
More precisely, we have for $\varphi_1$ and $x \in \HH^{N,M}$ that
\begin{align}
\prox_{\eta \varphi_1} (x) 
&= 
\argmin_{u \in \HH^{N,M} } \Big\{ \frac12 \sum_{(i,j) \in {\mathcal G}} d( x_{i,j},u_{i,j} )^2 
+  \frac{\eta}{2} \sum_{(i,j) \in {\mathcal V}} d( f_{i,j},u_{i,j} )^2 \Big\}\\
&=
\Big( \prox_{\eta g_{i,j}} (x_{i,j}) \Big)_{(i,j) \in {\mathcal G}},
\end{align}
where $g_{i,j}\colon \HH \rightarrow \mathbb R$ is defined by 
\begin{equation}\label{klein_g}
g_{i,j} \coloneqq \frac12 d(f_{i,j},\cdot)^2
\end{equation} 
if $(i,j) \in {\mathcal V}$ and $g_{i,j} \coloneqq 0$ otherwise.
\\
For $\varphi_2$ we get
\begin{align}
 \prox_{\eta {\varphi}_2 }(x) 
 &= \argmin_{u \in {\HH^{N,M}}} \sum_{i,j=1}^{N,M} \tfrac12 d (x_{i,j},u_{i,j}) ^2 + \eta
 \sum_{i,j=1}^{\bigl\lfloor\!\frac{N}{2}\!\bigr\rfloor,M}
		 d (u_{2i-1,j},u_{2i,j})\\
 &=\argmin_{u \in {\HH^{N,M}}} \sum_{i,j=1}^{\bigl\lfloor\!\frac{N}{2}\!\bigr\rfloor,M} \left( \tfrac12 d (x_{2i-1,j},u_{2i-1,j}) ^2 + \tfrac12 d (x_{2i,j},x_{2i,j}) ^2
		 + \eta d (u_{2i-1,j},u_{2i,j}) \right)\\
 &= \left( \prox_{\eta G }(x_{2i-1,j},x_{2i,j}) \right)_{i,j=1}^{\bigl\lfloor\!\frac{N}{2}\!\bigr\rfloor,M}, 
\end{align}
where $G\colon \HH^2 \rightarrow \mathbb R$ is given by 
\begin{equation}\label{gross_G}
G \coloneqq d(\cdot,\cdot).
\end{equation}
Similarly the proximal mappings of $\varphi_k$, $k=3,4,5$, can be computed for pairwise components.

The proximal mappings of $g$ and $G$, and consequently of our functions $\varphi_k$, $k=1,\ldots,K$, 
are given analytically, see~\cite{FO2002,WDS2014}.

\begin{lemma} \label{lem:proxies}
Let $(\HH,d)$ be an Hadamard manifold, $\eta > 0$ and $a \in \HH$.
\begin{enumerate}[label={\textup{\roman*)}}]
	\item\label{item:dataProx}
	The proximal mappings of $g\coloneqq \frac{1}{\nu} d(a,\cdot)^\nu$, $\nu \in \{1,2\}$,
		are given by 
		\begin{equation}
			p_x = \prox_{\eta g}(x) = \gamma_{\overset{\frown}{x,a}}(\hat s)
		\end{equation}
		with
		\begin{align}
			\hat s \coloneqq
			\begin{cases}
				\min\big\{ \tfrac{\eta}{d(x,a)} ,1 \big\}&\mbox{\textup{ for }} \nu = 1,\\
				\tfrac{\eta}{1+\eta} & \mbox{\textup{ for }} \nu = 2.
			\end{cases}
\end{align}
	\item\label{item:diffProx}
		The proximal mappings of
		$G\coloneqq d(\cdot,\cdot)^\nu$, $\nu \in \{1,2\}$,
		are given for $x = (x_0,x_2)$ by 
		\begin{equation}
			p_x = \prox_{\eta G} (x)
			= \bigl(\gamma_{\overset{\frown}{x_0,x_1}}(\hat s),\gamma_{\overset{\frown}{x_1,x_0}}(\hat s)\bigr),
		\end{equation}
		with
		\begin{align}
			\hat s \coloneqq
			\begin{cases}
				\min\big\{ \tfrac{\eta}{d(x_0,x_1)} , \tfrac{1}{2} \big\}&\mbox{\textup{ for }} \; \nu = 1,\\
				\tfrac{\eta}{1+2\eta}&\mbox{\textup{ for }}\nu = 2.
			\end{cases}
		\end{align}	
\end{enumerate}
\end{lemma}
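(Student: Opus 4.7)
The plan is to reduce both proximal computations to a one-dimensional convex optimization along the geodesic joining the anchor points, and then verify the result via first-order optimality. In both parts the objective
\[
h_{\mathrm{(i)}}(y)=\tfrac12 d(x,y)^2+\eta g(y),\qquad h_{\mathrm{(ii)}}(u_0,u_1)=\tfrac12\bigl(d(x_0,u_0)^2+d(x_1,u_1)^2\bigr)+\eta G(u_0,u_1)
\]
is proper, lsc and strongly convex with parameter $1$ thanks to property~\ref{item:DSq-stronly-convex}, so Theorem~\ref{hadamardmin}\,\ref{item:psclsc-unique} guarantees a unique minimizer and Theorem~\ref{th:subdiff_calculus} supplies the subdifferential sum rule. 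It therefore suffices to exhibit a candidate $y^{*}$ with $0\in\partial h(y^{*})$ and read off~$\hat s$.

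For part~\ref{item:dataProx} I would take the candidate $y^{*}=\gamma_{\overset{\frown}{x,a}}(\hat s)$. Since $y^{*}$ lies on the geodesic connecting $x$ and $a$, the tangent vectors $\log_{y^{*}}x$ and $\log_{y^{*}}a$ are anti-parallel, with
\[
\log_{y^{*}}x=-\tfrac{\hat s}{1-\hat s}\,\log_{y^{*}}a,\qquad d(y^{*},a)=(1-\hat s)\,d(x,a).
\]
For $\nu=2$, $g$ is smooth with $\nabla g(y^{*})=-\log_{y^{*}}a$, and the optimality relation $-\log_{y^{*}}x=\eta\log_{y^{*}}a$ collapses to the scalar equation $\tfrac{\hat s}{1-\hat s}=\eta$, yielding the stated $\hat s$. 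For $\nu=1$ with $y^{*}\ne a$ one uses $\partial g(y^{*})=\bigl\{-\log_{y^{*}}a/d(y^{*},a)\bigr\}$; the analogous scalar equation gives $\hat s=\eta/d(x,a)$ whenever this is at most $1$. The remaining case $\eta\ge d(x,a)$ needs separate treatment: here $y^{*}=a$, the subdifferential $\partial d(a,\cdot)(a)$ equals the closed unit ball $\bar B_{1}\subset T_{a}\HH$, and $0\in\{-\log_{a}x\}+\eta \bar B_{1}$ holds exactly when $d(x,a)\le\eta$, consistent with the clamp $\hat s=\min\{\eta/d(x,a),1\}$.

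Part~\ref{item:diffProx} is handled on the product Hadamard manifold $\HH^{2}$ in the same spirit. The functional is invariant under the swap $(u_{0},u_{1},x_{0},x_{1})\leftrightarrow(u_{1},u_{0},x_{1},x_{0})$, so by uniqueness both coordinates of the minimizer must sit on the geodesic $\gamma\coloneqq\gamma_{\overset{\frown}{x_{0},x_{1}}}$ at symmetric parameters; I would therefore set $u_{0}^{*}=\gamma(\hat s)$ and $u_{1}^{*}=\gamma(1-\hat s)$ with $\hat s\le\tfrac12$. The four logarithms $\log_{u_{0}^{*}}x_{0},\log_{u_{0}^{*}}u_{1}^{*},\log_{u_{1}^{*}}x_{1},\log_{u_{1}^{*}}u_{0}^{*}$ are then all parallel to the geodesic tangent with easily computable lengths, so the two coupled block-optimality conditions—identical in form to those of part~\ref{item:dataProx} but with $a$ replaced by the corresponding other optimal point—reduce to a single scalar equation whose solution is the $\hat s$ stated in the lemma. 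The $\tfrac12$-clamp in the $\nu=1$ case covers the degenerate situation $u_{0}^{*}=u_{1}^{*}$, paralleling the boundary case in part~\ref{item:dataProx}.

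The main subtlety is justifying the geodesic ansatz: in Euclidean space convexity along straight lines trivializes this, but on a Hadamard manifold one does not argue it a priori. The cleanest route is to produce the geodesic candidate and verify $0\in\partial h$ via Theorem~\ref{th:subdiff_calculus}, after which uniqueness from strong convexity forces this candidate to be the global minimum. Extra care is required at the non-smooth locus of the distance function (in~\ref{item:dataProx} when $y^{*}=a$, and in~\ref{item:diffProx} when $u_{0}^{*}=u_{1}^{*}$), where one must work with the full subdifferential—a closed unit ball in the relevant tangent space—rather than a single gradient vector.
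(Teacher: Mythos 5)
The paper gives no in-text proof of this lemma (it only cites \cite{FO2002,WDS2014}), so your argument has to stand on its own. Its strategy is the standard one and is sound: write the prox objective, note it is strongly convex so the minimizer is unique, exhibit the geodesic candidate, and certify it by $0\in\partial h$ using the sum rule of Theorem~\ref{th:subdiff_calculus}; your treatment of part~\ref{item:dataProx}, including the nonsmooth point $y^*=a$ via $\partial d(a,\cdot)(a)=\{\xi\in T_a\HH:\lVert\xi\rVert\le 1\}$, is correct, and the worry you raise about the geodesic ansatz is indeed resolved exactly as you say (verify, then invoke uniqueness). Your swap-symmetry remark in part~\ref{item:diffProx} only shows equivariance of the prox under exchanging the data, not symmetry of the minimizer for fixed data, but this is harmless since you do not actually rely on it.

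Two concrete points in part~\ref{item:diffProx} need attention. First, you assert without computing that the coupled stationarity conditions ``reduce to a single scalar equation whose solution is the $\hat s$ stated in the lemma.'' For $\nu=2$ with $G=d(\cdot,\cdot)^2$ exactly as written, the equation is $\hat s\,d = 2\eta(1-2\hat s)\,d$, giving $\hat s=\tfrac{2\eta}{1+4\eta}$; the stated value $\tfrac{\eta}{1+2\eta}$ solves $\hat s=\eta(1-2\hat s)$, i.e.\ it is the prox of $\tfrac12 d^2$, so the lemma's $\nu=2$ entry is consistent only with the normalization $G=\tfrac1\nu d^\nu$ as in part~\ref{item:dataProx}. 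Carrying out the two-line computation would have surfaced this; it is immaterial for the rest of the paper (only $\nu=1$ of part~\ref{item:diffProx} is used, and Theorem~\ref{prop:had_2} needs only $2\hat s\in[0,1]$), but your blanket claim is not literally verified. Second, in the degenerate case $2\eta\ge d(x_0,x_1)$ of $\nu=1$, checking each block against ``a closed unit ball'' is not the right certificate: block-wise optimality of a nonsmooth convex function does not in general imply joint optimality, so you must verify $0\in\partial h(u_0^*,u_1^*)$ on the product manifold. This works out: the subdifferential of $d(\cdot,\cdot)$ at a diagonal point $(m,m)$ contains every pair $(\xi,-\xi)$ with $\lVert\xi\rVert\le1$, because $\lvert\langle\xi,\log_m v_0\rangle-\langle\xi,\log_m v_1\rangle\rvert\le\lVert\log_m v_0-\log_m v_1\rVert\le d(v_0,v_1)$ on a Hadamard manifold, and at the midpoint $m$ of $\gamma_{\overset{\frown}{x_0,x_1}}$ one may take $\xi=\tfrac1\eta\log_m x_0$ (norm $\tfrac{d}{2\eta}\le1$, and $\log_m x_1=-\log_m x_0$) to obtain $0\in\partial h$. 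In the remaining configurations the objective is differentiable at the candidate, so block-wise and joint stationarity coincide and your argument goes through.
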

%

It follows that the reflection of $\varphi_1$ is just given by the componentwise reflections
of the $g_{i,j}$, i.e., for $x \in \HH^{N,M}$ we have 
\[{\mathcal R}_{\varphi_1}(x) =  \left( {\mathcal R}_{g_{i,j}}(x_{i,j}) \right)_{i,j \in {\mathcal G}}.\]
Similarly, the reflection of $\varphi_k$, $k=2,\ldots,5$, are determined by pairwise reflections
of $G$, e.g., 
\[{\mathcal R}_{\varphi_2}(x) =  \left( {\mathcal R}_{G}(x_{2i-1,j},x_{2i,j}) \right)_{i,j=1}^{\bigl\lfloor\!\frac{N}{2}\!\bigr\rfloor,M}.\]

Based on these considerations and Theorem~\ref{th:kras} we see immediately that
the parallel DR algorithm~\ref{alg:DR_par_had} applied to the special splitting~\eqref{task_sum_TV}
of the ROF-like functional on images with values in $\HH$ converges if the reflections
${\mathcal R}_{g_{i,j}}$, 
${\mathcal R}_{G}$ and ${\mathcal R}_{\iota_{\textsf{D}}}$
are nonexpansive.
This is covered in the next two sections.
%
\section{Reflections of Functions Related to Distances} \label{subsec:dist}
%
In this section we will show that reflections with respect to the distance-like functions
$g$ and $G$ in~\eqref{klein_g} and~\eqref{gross_G} are indeed nonexpansive
which is an interesting result on its own.

We will  need the following auxiliary lemma.

%
\begin{lemma} \label{est_bacak}
Let $(\HH,d)$ be an Hadamard space. Then for $x_0,x_1,y_0,y_1 \in \HH$
it holds
\begin{equation}\label{formel_w3}
	\begin{split}
	0 &\le d^2(x_0,y_0) + d^2(x_1,y_1) + d^2(x_0,x_1)+d^2(y_0,y_1)
	\\
	&\qquad -d^2(x_0,y_1) - d^2(x_1,y_0).		
	\end{split}
\end{equation}
For $s,t \in [0,1]$ we have
\begin{align} \label{formel_w1}
	d^2\bigl(\gamma_{\overset{\frown}{x_0,x_1}}(s),
		\gamma_{\overset{\frown}{y_0,y_1}}(t)\bigr)
	&\le st d^2(x_1,y_1)+(1-s)(1-t)d^2(x_0,y_0)\\
	&\quad + (1-s)td^2(x_0,y_1)+s(1-t)d^2(x_1,y_0)\\
	&\quad - s(1-s)d^2(x_0,x_1)-t(1-t)d^2(y_0,y_1).
\end{align}
For $s=t$, this becomes 
\begin{align} \label{formel_w2}
	d^2\bigl(\gamma_{\overset{\frown}{x_0,x_1}}(s),
		\gamma_{\overset{\frown}{y_0,y_1}}(s)\bigr)
	&\le s^2 d^2(x_1,y_1)+(1-s)^2d^2(x_0,y_0)\\
	& + (1-s)s\bigl(d^2(x_0,y_1)+ d^2(x_1,y_0) - d^2(x_0,x_1)-d^2(y_0,y_1) \bigr).
\end{align}
and for $x=x_0$, $y = y_0$ and $x_1=y_1 = a$ we obtain further
\begin{align} \label{formel_w4}
d^2 \bigl(\gamma_{\overset{\frown}{x,a}}( s),\gamma_{\overset{\frown}{y,a}}(t) \bigr) 
&\le
d^2(x,y) + s(s-2) d^2(x,a) + t(t-2) d^2(y,a) \\
&\quad + 2 (s+t-st) d(x,a)d(y,a).
\end{align}
\end{lemma}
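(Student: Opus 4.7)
The plan is to derive the four inequalities in sequence, using only the Reshetnyak quadrilateral inequality \eqref{eq:reshet} and the strong convexity of the squared distance (property \ref{item:DSq-stronly-convex}), and then to specialize.

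For \eqref{formel_w3}, I would apply \eqref{eq:reshet} with the identification $x \leftarrow x_0$, $y \leftarrow x_1$, $v \leftarrow y_1$, $w \leftarrow y_0$ to obtain
$d^2(x_0, y_1) + d^2(x_1, y_0) \le d^2(x_0, y_0) + d^2(x_1, y_1) + 2 d(x_0, x_1) d(y_0, y_1),$
and then bound the mixed term by $d^2(x_0, x_1) + d^2(y_0, y_1)$ via $2ab \le a^2 + b^2$. A rearrangement gives \eqref{formel_w3}.

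For \eqref{formel_w1}, the key tool is the strong convexity of $d^2(\cdot, z)$, which reads
$d^2\bigl(\gamma_{\overset{\frown}{x,y}}(\tau), z\bigr) \le (1-\tau) d^2(x,z) + \tau d^2(y,z) - \tau(1-\tau) d^2(x,y).$
I would apply this first with $\tau = s$, endpoints $x_0, x_1$, and reference point $z = \gamma_{\overset{\frown}{y_0,y_1}}(t)$; this reduces the left-hand side to a convex combination of $d^2(x_0, \gamma_{\overset{\frown}{y_0,y_1}}(t))$ and $d^2(x_1, \gamma_{\overset{\frown}{y_0,y_1}}(t))$, minus a $s(1-s) d^2(x_0, x_1)$ correction. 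Applying the same inequality a second time to each of these two terms, now with $\tau = t$ along the second geodesic, produces precisely the four $s,t$-bilinear distance terms together with the $-t(1-t) d^2(y_0, y_1)$ correction. Setting $t = s$ in the resulting inequality is the entire proof of \eqref{formel_w2}.

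Finally, for \eqref{formel_w4}, I would specialize \eqref{formel_w1} with $x_0 = x$, $x_1 = a$, $y_0 = y$, $y_1 = a$. Using $d(a,a) = 0$ and collecting gives
$d^2\bigl(\gamma_{\overset{\frown}{x,a}}(s), \gamma_{\overset{\frown}{y,a}}(t)\bigr) \le (1-s)(1-t) d^2(x,y) + (1-s)(t-s) d^2(x,a) + (1-t)(s-t) d^2(y,a).$
To reach the stated form, I would then use the reverse triangle inequality $|d(x,a) - d(y,a)| \le d(x,y)$, i.e., $d^2(x,y) - d^2(x,a) - d^2(y,a) + 2 d(x,a) d(y,a) \ge 0$, multiply it by the nonnegative factor $s + t - st = s + t(1-s) \ge 0$ (for $s,t \in [0,1]$), and add to the right-hand side. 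The coefficient of $d^2(x,y)$ becomes $1$, the cross-coefficient of $d(x,a) d(y,a)$ becomes $2(s+t-st)$, and the coefficients of $d^2(x,a)$ and $d^2(y,a)$ collapse to $s(s-2)$ and $t(t-2)$ respectively, which is the claim.

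The main obstacle is the bookkeeping in the last step: one must check that the specific nonnegative multiple of the reverse-triangle identity that is added to upgrade the bound from \eqref{formel_w1} to \eqref{formel_w4} is exactly the one that produces the target coefficients $(1, s(s-2), t(t-2), 2(s+t-st))$ simultaneously. Everything else is an application of the two CAT(0) tools \eqref{eq:reshet} and \ref{item:DSq-stronly-convex} followed by a specialization of parameters.
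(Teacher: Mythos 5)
Your proposal is correct and follows essentially the same route as the paper: \eqref{formel_w1} by applying the strong convexity of the squared distance (property \ref{item:DSq-stronly-convex}) twice, \eqref{formel_w2} by setting $s=t$, and \eqref{formel_w4} by specializing \eqref{formel_w1} to $x_1=y_1=a$ and then trading $-(s+t-st)\,d^2(x,y)$ for $-(s+t-st)\bigl(d^2(x,a)+d^2(y,a)-2d(x,a)d(y,a)\bigr)$ via the reverse triangle inequality, with the coefficient bookkeeping checking out exactly as you indicate. The only (harmless) deviation is that you derive \eqref{formel_w3} directly from the quadrilateral inequality \eqref{eq:reshet} together with $2ab\le a^2+b^2$, whereas the paper simply cites \cite[Corollary 1.2.5]{B2014} for this fact.
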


\begin{proof}
Estimate~\eqref{formel_w3} was proved in~\cite[Corollary 1.2.5]{B2014}.
Relation~\eqref{formel_w1} can be deduced by applying (D2) twice.
Formula~\eqref{formel_w2} follows directly from the previous one by setting $s=t$.
It remains to show \eqref{formel_w4}.
For $x=x_0$, $y = y_0$ and $x_1=y_1 = a$, inequality~\eqref{formel_w1} becomes 
\begin{align}
d^2 \big(&\gamma_{\overset{\frown}{x,a}}( s),\gamma_{\overset{\frown}{y,a}}(t) \big)
\\&\le
(1-s) (1-t) d^2(x,y) + (1-s) (t-s) d^2(x,a) - (1-t)(t-s) d^2(y,a) 
\\
&= d^2(x,y) - (s+t-st)  d^2(x,y)
 + (1-s) (t-s) d^2(x,a) - (1-t)(t-s) d^2(y,a).
\end{align}
By the triangle inequality we have
\begin{align}
d(x,y) &\ge |d(x,a) - d(y,a)|,\\
d^2(x,y) &\ge d^2(x,a) + d^2(y,a) - 2 d(x,a)d(y,a).\label{triang}
\end{align}
Since $s,t \in [0,1]$, it holds $s+t-st \ge 0$. Replacing $(s+t-st) d^2(x,y)$ 
we obtain
\begin{align}
d^2 \big(\gamma_{\overset{\frown}{x,a}}( s),\gamma_{\overset{\frown}{y,a}}(t) \big) 
&\le
d^2(x,y) - (s+t-st) \left( d^2(x,a) + d^2(y,a) - 2 d(x,a)d(y,a) \right)\\
&\quad + (1-s) (t-s) d^2(x,a) - (1-t)(t-s) d^2(y,a).
\end{align}
Resorting yields~\eqref{formel_w4}.
\end{proof}

%
\begin{figure}[t]\centering
	\begin{subfigure}{0.49\textwidth}\centering
		\includegraphics{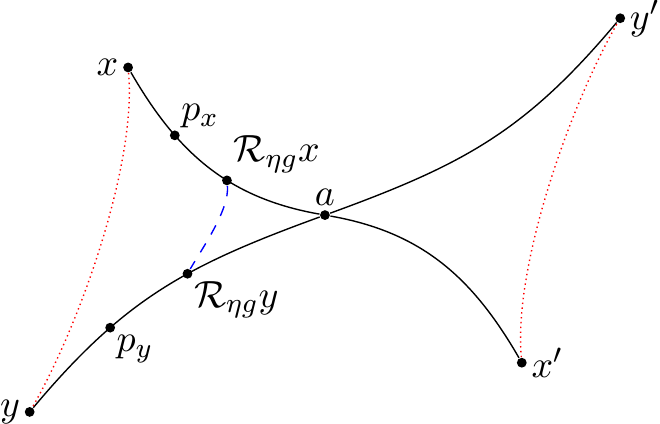}			
		\caption[]{$\hat{s}= \tfrac{\eta}{1+\eta}\in\bigl[0,\tfrac{1}{2}\bigr]$}\label{fig:d2data:case1}
	\end{subfigure}	
	\begin{subfigure}{0.49\textwidth}\centering
		\includegraphics{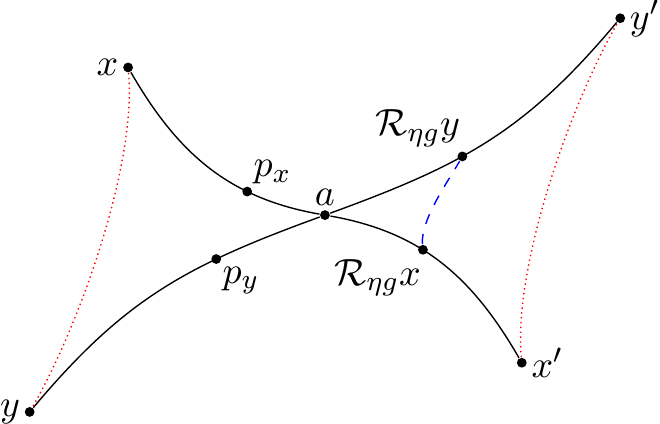}				
		\caption[]{$\hat{s}= \tfrac{\eta}{1+\eta}\in\bigl(\tfrac{1}{2},1\bigr]$}\label{fig:d2data:case2}
	\end{subfigure}
	\caption[]{Illustration of the nonexpansiveness of the reflections in Theorem \ref{reflex:d(xa)} for $g(x) = d^2(x,a)$.}
\end{figure}
\begin{figure}[t]\centering
	\begin{subfigure}{0.49\textwidth}\centering
		\includegraphics{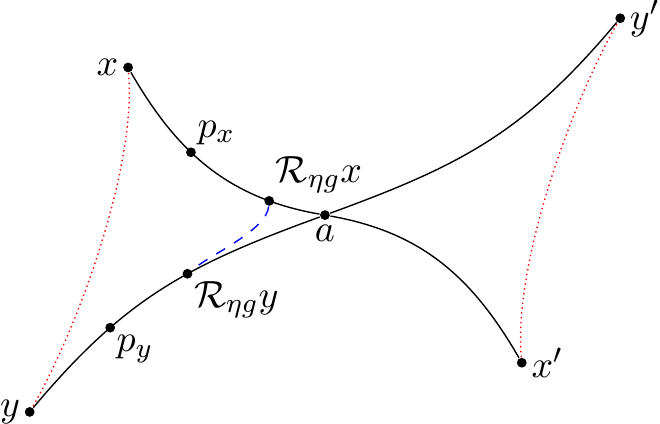}	
		\caption[]{$\hat{s},\hat{t}\in\bigl[0,\tfrac{1}{2}\bigr]$}\label{fig:d1data:case1}
	\end{subfigure}	
	\begin{subfigure}{0.49\textwidth}\centering
		\includegraphics{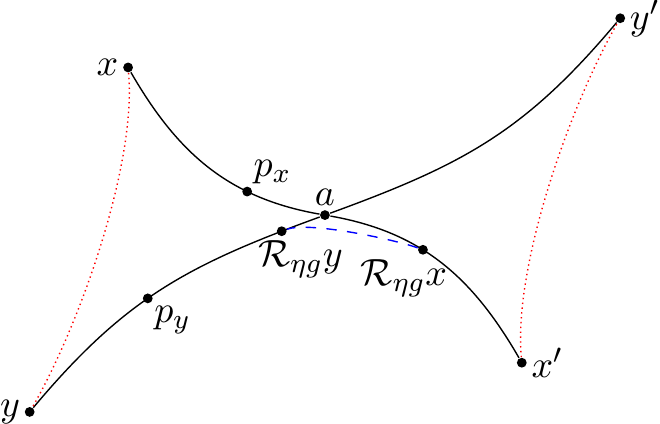}
		\caption[]{$\hat{s}\in\bigl[0,\tfrac{1}{2}\bigr],\hat{t}\in\bigl(\tfrac{1}{2},1\bigr]$}\label{fig:d1data:case3}
	\end{subfigure}
	\caption[]{Illustration of the nonexpansiveness of the reflections in Theorem \ref{reflex:d(xa)} for $g(x) = d(x,a)$, where $\hat{s} =\min\big\{ \tfrac{\eta}{d(x,a)} ,1 \big\}$ and $\hat{t} = \min\big\{ \tfrac{\eta}{d(y,a)} ,1 \big\}$.}
\end{figure}

Now we consider the reflections. 
We will use that for two points \(x,a\in\mathcal H\) and
$p =  \gamma_{\overset{\frown}{x,a}}( \hat t)$, $\hat t \in [0,1]$,
the reflection of $x$ at $p$ can be written as
\begin{equation} \label{eq:reflect_2}
R_p(x) = 
\begin{cases}
	\gamma_{\overset{\frown}{x,a}}( 2\hat t\,)&\mbox{ if } 2\hat t \in [0,1],\\
	\gamma_{\overset{\frown}{x', a}}( 2- 2\hat t\,)&\mbox{ if } 2\hat t \in (1,2]
\end{cases}
\end{equation}
with $x'\coloneqq R_a(x)$. 

%
\begin{theorem} \label{reflex:d(xa)}
For an arbitrary fixed $a \in {\mathcal H}$ and $g \coloneqq \frac{1}{\nu} d^\nu(a,\cdot)$,
$\nu \in\{1,2\}$,
the reflection ${\mathcal R}_{\eta g}$, $\eta >0$, is nonexpansive.
\end{theorem}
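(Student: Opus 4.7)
The plan is to combine the explicit formula for $\prox_{\eta g}$ from Lemma~\ref{lem:proxies}, the geodesic description \eqref{eq:reflect_2} of the reflection, the CAT(0)-style estimates of Lemma~\ref{est_bacak}, and the fact that on a symmetric Hadamard manifold the geodesic reflection $R_a$ is an isometry. By Lemma~\ref{lem:proxies} the proximal point $p_x$ sits on $\gamma_{\overset{\frown}{x,a}}$ at some parameter $\hat s\in[0,1]$, and \eqref{eq:reflect_2} then identifies the reflection as $\mathcal R_{\eta g}(x)=\gamma_{\overset{\frown}{x,a}}(2\hat s)$ if $\hat s\le 1/2$, and as $\mathcal R_{\eta g}(x)=\gamma_{\overset{\frown}{x',a}}(2-2\hat s)$ with $x'=R_a(x)$ if $\hat s>1/2$. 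My proof splits according to where $\hat s$ and the analogous parameter $\hat t$ for $y$ lie in $[0,1/2]$ versus $(1/2,1]$.

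For $\nu=2$ the value $\hat s=\eta/(1+\eta)$ is independent of the base point, so $\hat s=\hat t$. If $\hat s\le 1/2$, I apply \eqref{formel_w4} with $s=t=2\hat s$; the three extra terms on the right-hand side collapse to $-s(2-s)(d(x,a)-d(y,a))^2\le 0$. If $\hat s>1/2$, I first use that $R_a$ maps $\gamma_{\overset{\frown}{x',a}}$ isometrically to $\gamma_{\overset{\frown}{x,a}}$ with the same parameter value to rewrite
\[
d\bigl(\mathcal R_{\eta g}(x),\mathcal R_{\eta g}(y)\bigr)=d\bigl(\gamma_{\overset{\frown}{x,a}}(2-2\hat s),\gamma_{\overset{\frown}{y,a}}(2-2\hat s)\bigr),
\]
and then re-apply \eqref{formel_w4} with parameter $2-2\hat s\in[0,1)$ to obtain an analogous non-positive leftover.

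For $\nu=1$ the parameters $\hat s=\min\{\eta/d(x,a),1\}$ and $\hat t=\min\{\eta/d(y,a),1\}$ can differ, and I distinguish three regimes. (a) If $\hat s,\hat t\le 1/2$, that is $d(x,a),d(y,a)\ge 2\eta$, I apply \eqref{formel_w4} with $s=2\eta/d(x,a)$, $t=2\eta/d(y,a)$; using $s\,d(x,a)=t\,d(y,a)=2\eta$, the three extra terms telescope exactly to zero. (b) If $\hat s,\hat t>1/2$, the same mechanism works after transporting both reflections by the isometry $R_a$; the sub-sub-cases where $d(x,a)\le\eta$ or $d(y,a)\le\eta$ (so that $\mathcal R_{\eta g}$ coincides with $R_a$ on that side) are absorbed by specialising the appropriate parameter to $0$, the residual extras then simplifying to $4(d(x,a)-\eta)(d(y,a)-\eta)$, which is non-positive whenever exactly one of $d(x,a),d(y,a)$ is $\le\eta$, while the case of both $\le\eta$ follows immediately from $R_a$ being an isometry.

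The main obstacle is the mixed regime (c), where $\hat s\le 1/2<\hat t$, that is $d(x,a)\ge 2\eta>d(y,a)$: one reflection lies inside the geodesic segment from $x$ to $a$ while the other lies past $a$, and transporting one side via $R_a$ leaves \eqref{formel_w4} with the unwanted quantity $d^2(x',y)=d^2(x,y')$, which Lemma~\ref{est_bacak} does not bound by $d^2(x,y)$. My remedy is to drop \eqref{formel_w4} here and use only the triangle inequality through $a$: a direct check of the position of each reflection gives $d(\mathcal R_{\eta g}(x),a)=d(x,a)-2\eta$, together with $d(\mathcal R_{\eta g}(y),a)=2\eta-d(y,a)$ when $d(y,a)\ge\eta$ and $d(\mathcal R_{\eta g}(y),a)=d(y,a)$ otherwise. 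In either sub-sub-case the triangle bound $d(\mathcal R_{\eta g}(x),\mathcal R_{\eta g}(y))\le d(\mathcal R_{\eta g}(x),a)+d(a,\mathcal R_{\eta g}(y))$ simplifies to at most $d(x,a)-d(y,a)$, and the reverse triangle inequality $d(x,y)\ge d(x,a)-d(y,a)$ completes the proof.
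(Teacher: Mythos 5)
Your proof is correct and takes essentially the same route as the paper's: the explicit proximal/reflection formulas from Lemma~\ref{lem:proxies} and \eqref{eq:reflect_2}, the estimate \eqref{formel_w4} with vanishing or non-positive leftover terms in the aligned cases, the isometry $R_a$ to transport the reflections lying past $a$, and a triangle-inequality-through-$a$ argument in the mixed regime (which is exactly how the paper handles its cases 2.2\,ii) and 2.3\,i)). The only cosmetic differences are the case bookkeeping (you split by whether $\hat s,\hat t$ exceed $\tfrac12$, the paper by comparing $d(x,a),d(y,a)$ with $\eta$) and that for $\nu=2$ the paper invokes joint convexity of the distance instead of \eqref{formel_w4}.
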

%

\begin{proof}
By~\eqref{eq:reflect_2} we have 
\[
	{\mathcal R}_{\eta g}(x) = R_{p_x} (x) =
	\begin{cases}
		\gamma_{\overset{\frown}{x,a}}( 2\hat s)&\mbox{ if } 2\hat s \in [0,1],\\
		\gamma_{\overset{\frown}{x', a}}( 2- 2\hat s)&\mbox{ if } 2\hat s \in (1,2]
	\end{cases}
\] 
where $x' \coloneqq R_a(x)$, and $p_x$ resp.~$\hat s$ are given by Lemma~\ref{lem:proxies}\,\ref{item:dataProx}.
Similarly, we get
$p_y = \prox_{\eta g}(y) = \gamma_{\overset{\frown}{y,a}}(\hat t)$, where 
$\hat t = \hat s$ if $\nu = 2$ and
$\hat t = \min\big\{ \tfrac{\eta}{d(y,p)} ,1 \big\}$ if $\nu = 1$.
\begin{enumerate}[label={\arabic*.},leftmargin=0pt,itemindent=*]
	%
	\item Let $\nu = 2$. We distinguish two cases. 
	\begin{enumerate}[label={\arabic{enumi}.\arabic*.},leftmargin=0pt,itemindent=*]
	\item
	If
	$2\hat s \in [0,1]$, cf. Fig.~\ref{fig:d2data:case1}, we obtain with $s\coloneqq 2 \hat s$ by the joint convexity of the distance
	function
	\begin{align}
		d\bigl(\mathcal{R}_{\eta g}(x),\mathcal{R}_{\eta g}(y)\bigr)
		= d \bigl(\gamma_{\overset{\frown}{x,a}}( s),
			\gamma_{\overset{\frown}{y,a}}( s) \bigr)
		\le (1- s)d(x,y) \le d(x,y).
	\end{align}
	\item
	In the case $2\hat s \in (1,2]$, see Fig.~\ref{fig:d2data:case2}, we verify with
	$s\coloneqq 2- 2\hat s \in (0,1]$
	by the joint convexity of the distance function
	and since the reflection is an isometry that
	\begin{align}
		d\bigl(\mathcal{R}_{\eta g}(x),\mathcal{R}_{\eta g}(y)\bigr)
		= d \bigl(\gamma_{\overset{\frown}{x',a}}( s),
			\gamma_{\overset{\frown}{y',a}}( s) \bigr)
		\le (1- s)d\bigl(R_a(x),R_a(y)\bigr) \le d(x,y).
	\end{align}
	\end{enumerate}
	%
	\item Let $\nu = 1$. Without loss of generality we assume that
	$d(x,a) \le d(y,a)$ so that $\tfrac{\eta}{d(y,a)} \le \tfrac{\eta}{d(x,a)}$.
	We distinguish three cases.
	\begin{enumerate}[label={\arabic{enumi}.\arabic*.},leftmargin=0pt,itemindent=*]
		%
		\item In the case $d(x,a) \le d(y,a) \le \eta$ we obtain
		$\hat s = \hat t = 1$ and consequently we set
		$s \coloneqq 2 - 2 \hat s =0$ and $t \coloneqq 2 - 2 \hat t =0$.
		Since the reflection is an isometry this implies
		\begin{align}
			d\bigl(\mathcal{R}_{\eta g}(x),\mathcal{R}_{\eta g}(y)\bigr)
			=
			d \bigl(\gamma_{\overset{\frown}{x',a}}(0),
			\gamma_{\overset{\frown}{y',a}}(0) \bigr)
			=
			d\bigl(R_a(x),R_a(y)\bigr)
			=
			d(x,y).
		\end{align}
		%
		\item In the case $\eta \le d(x,a) \le d(y,a)$ we have
		$\hat s = \tfrac{\eta}{d(x,a)}$ and $\hat t = \tfrac{\eta}{d(y,a)}$.
		We distinguish three cases.
		\begin{enumerate}[label={\roman*)},leftmargin=1em,itemindent=*]
			%
			\item\label{item:bothDist-latnu-hatshatt-lethalf}
			If $\hat t, \hat s \le \tfrac12$, cf.~Fig.~\ref{fig:d1data:case1}, we obtain with
			$s \coloneqq 2\hat s$ and $t \coloneqq 2\hat t$
			by~\eqref{formel_w4} the estimate
			\begin{align} \label{eq_basic}
				d^2\bigl(\mathcal{R}_{\eta g}(x),\mathcal{R}_{\eta g}(y)\bigr)
				&= 
				d^2\bigl(\gamma_{\overset{\frown}{x,a}}( s),
				\gamma_{\overset{\frown}{y,a}}(t) \bigr) \\
				&\le d^2(x,y) + s(s-2) d^2(x,a) + t(t-2) d^2(y,a)\\
				&\quad + 2(s+t-st) d(x,a) d(y,a).
			\end{align}
			Plugging in $s = \tfrac{2\eta}{d(x,a)}$
			and $t = \tfrac{2\eta}{d(y,a)}$ we see that the expressions
			containing $s$ and $t$ vanish so that the reflection is non-expansive.
			%
			\item \label{}
			If $\hat t \le  \tfrac12 \le \hat s$, cf.~Fig.~\ref{fig:d1data:case3}, we get
			with $s \coloneqq 2-2 \hat s$ and $t \coloneqq 2\hat t$ that
			\begin{align}
				d\bigl(\mathcal{R}_{\eta g}(x),\mathcal{R}_{\eta g}(y)\bigr)
				&=
				d \big(\gamma_{\overset{\frown}{x',a}}( s),
					\gamma_{\overset{\frown}{y,a}}(t) \big)
				=
				d(\tilde x,\tilde y)
			\end{align}
			with
			$\tilde x = \gamma_{\overset{\frown}{x',a}}( s)$
			and $\tilde y = \gamma_{\overset{\frown}{y,a}}(t)$.
			By the assumption we have
			$d(a,\tilde x) = 2\eta - d(x,a)$
			and
			$d(y,a) = 2\eta + d(a,\tilde y)$. Further, it follows by the
			triangle inequality
			\begin{align}
				d(\tilde y,\tilde x)
				&\le  d(a,\tilde x) + d(a,\tilde y) = 2\eta - d(x,a) + d(a,\tilde y),\\
				d(x,y)
				&\ge d(y,a) - d(x,a) = 2\eta + d(a,\tilde y) - d(x,a).
			\end{align}
			Hence we conclude $d(\tilde y,\tilde x) \le d(x,y)$.
			%
			\item
			If $\hat t, \hat s \ge \tfrac12$ we
			set $s \coloneqq 2-2 \hat s$ and $t \coloneqq 2 - 2\hat t$
			and obtain by~\eqref{formel_w4} and the isometry property of the
			reflection
			\begin{align} \label{22iii}
				d^2\bigl(\mathcal{R}_{\eta g}(x),\mathcal{R}_{\eta g}(y)\bigr)
				&=
				d^2 \bigl(\gamma_{\overset{\frown}{x',a}}( s),
					\gamma_{\overset{\frown}{y',a}}(t) \bigr)
				\\
				&\le
				d^2(x,y) + s(s-2) d^2(x,a) + t(t-2) d^2(y,a)\\
				&\quad + 2 (s+t-st) d(x,a)d(y,a).
			\end{align}
			Plugging in the expressions for $s$ and $t$ and noting that the above expression is 
			symmetric with respect to $(s,t)$ and $(2-s,2-t)$ we obtain the assertion as in~\ref{item:bothDist-latnu-hatshatt-lethalf}.
		\end{enumerate} 
		\item 
		In the case $d(x,a) \le \eta \le d(y,a)$ we have $\hat s = 1$
		and $\hat t = \tfrac{\eta}{d(y,a)}$. Again, we distinguish two cases:
		\begin{enumerate}[label={\roman*)},leftmargin=1em,itemindent=*]
			%
			\item If $\hat t \le \tfrac12$ we use $t\coloneqq 2 \hat t$ and  obtain
			\begin{align}
				d\bigl(\mathcal{R}_{\eta g}(x),\mathcal{R}_{\eta g}(y)\bigr) 
				&=
				d \bigl(\gamma_{\overset{\frown}{x',a}}(0),
					\gamma_{\overset{\frown}{y,a}}(t) \bigr)
				=
				d \big(x',\tilde y \big)
			\end{align}
			with $\tilde y\coloneqq \gamma_{\overset{\frown}{y,a}}(t)$.
			By assumption we have
			$2d(x,a) \le 2\eta$ so that
			\[
				d(a,y)  \ge 2 d(a,x) + d(a,\tilde y ).
			\]
			By the triangle inequality it follows
			\begin{align}
				d (x',\tilde y ) 
				&\le d(x',a) + d (a,\tilde y )
				= d(x,a) + d (a,\tilde y),\\
				d(x,y) &\ge d(a,y) - d(a,x) \ge d(a,x) + d(a,\tilde y ) 
			\end{align}
			and consequently $ d (x',\tilde y ) \le d(x,y)$.
			\item If $\hat t > \tfrac12$ we put $t = 2- 2 \hat t$ and get
			by~\eqref{formel_w4} and since the reflection is an isometry
			\begin{align}
				d^2\bigl(\mathcal{R}_{\eta g}(x),\mathcal{R}_{\eta g}(y)\bigr)
				&=
				d^2 \bigl(\gamma_{\overset{\frown}{x',a}}(0),
					\gamma_{\overset{\frown}{y',a}}(t) \bigr)\\
				&\le
				d^2(x,y) + 2 t d(x,a)d(y,a) +  t(t-2) d^2(y,a)\\
				&= d^2(x,y) + 2td(y,a)(d(x,a) - \eta) \le d^2(x,y).
			\end{align}
		\end{enumerate} 
	\end{enumerate} 
\end{enumerate} 
This finishes the proof. 
\end{proof}

%
%
\begin{figure}[t]
	\centering
	\begin{subfigure}{0.49\textwidth}
		\includegraphics{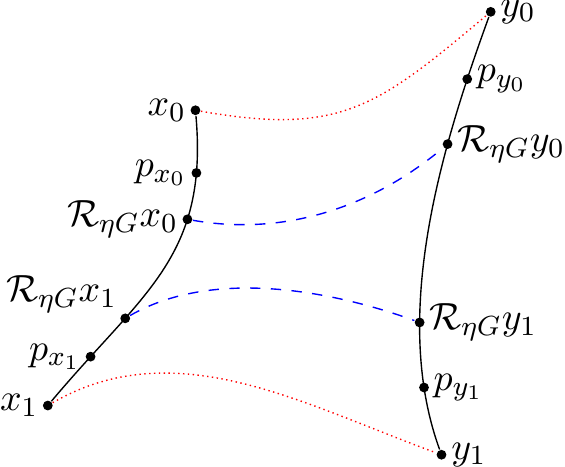}
		\caption[]{$d^2(x_1,x_2)$, $\hat{s}\in\bigl[0,\tfrac{1}{4}\bigr]$}\label{fig:d2tv:case1}
	\end{subfigure}
	\begin{subfigure}{0.49\textwidth}\centering
		\includegraphics{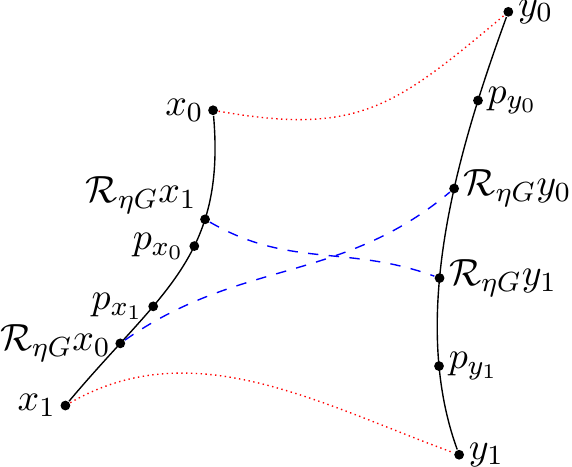}	
		\caption[]{$d(x_1,x_2),\ \hat{s}\in\bigl(\tfrac{1}{4},\ \tfrac{1}{2}\bigr],\ \hat{t}\in\bigl[0,\tfrac{1}{4}\bigr]$ }\label{fig:d2tv:case2}
	\end{subfigure}
	\caption[]{Illustration of the nonexpansiveness of the reflections in Theorem \ref{prop:had_2}, where $p_x = \bigr(p_{x_0},p_{x_1}\bigr)$ and $p_y = \bigr(p_{y_0},p_{y_1}\bigr)$, see Lemma \ref{lem:proxies}. \subref{fig:d2tv:case1} case \ref{prop:had_2:case1}, i.e., $\nu = 2$ and $\hat{s}\in\bigl[0,\tfrac{1}{4}\bigr]$, \subref{fig:d2tv:case2} case \ref{prop:had_2:case23}, i.e., $\nu = 1$, $\hat{s}=\tfrac{1}{2}$, and $\hat{t}\in\bigl[0,\tfrac{1}{4}\bigr]$.}\label{fig:d2tv}
\end{figure}

Recall that the distance $d_{\HH^2}$ on $\HH^2={\mathcal H} \times {\mathcal H}$ is defined by
\begin{align}
	d_{\HH^2}\bigl((x_0,x_1),(y_0,y_1)\bigr)\coloneqq \sqrt{d^2(x_0,y_0)+d^2(x_1,y_1)}.
\end{align}
%
\begin{theorem} \label{prop:had_2}
	For $G\coloneqq d^\nu (\cdot,\cdot)$, $\nu \in \{1,2\}$, the reflection $\mathcal{R}_{\eta G}$, $\eta>0$, is nonexpansive.
\end{theorem}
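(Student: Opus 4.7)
The plan is to follow the pattern of Theorem~\ref{reflex:d(xa)}. Write $(u_0, u_1) \coloneqq \mathcal{R}_{\eta G}(x)$ and $(v_0, v_1) \coloneqq \mathcal{R}_{\eta G}(y)$ for $x = (x_0, x_1)$, $y = (y_0, y_1) \in \HH^2$. By Lemma~\ref{lem:proxies}\,\ref{item:diffProx} and formula~\eqref{eq:reflect_2}, $u_0 = \gamma_{\overset{\frown}{x_0,x_1}}(2\hat s)$ and $u_1 = \gamma_{\overset{\frown}{x_0,x_1}}(1-2\hat s)$ (using $\gamma_{\overset{\frown}{x_1,x_0}}(\alpha) = \gamma_{\overset{\frown}{x_0,x_1}}(1-\alpha)$), and analogously for $v_0, v_1$ with parameter $\hat t$. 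Abbreviate $a = d^2(x_0,y_0)$, $b = d^2(x_1,y_1)$, $c = d^2(x_0,y_1)$, $e = d^2(x_1,y_0)$, $f = d^2(x_0,x_1)$, $g = d^2(y_0,y_1)$; the target is $d^2(u_0, v_0) + d^2(u_1, v_1) \le a+b$.

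For $\nu = 2$, the value $\hat s = \eta/(1+2\eta) = \hat t$ is independent of the data. Setting $s \coloneqq 2\hat s \in [0,1)$, I apply~\eqref{formel_w2} with parameter $s$ to bound $d^2(u_0, v_0)$ and with parameter $1-s$ to bound $d^2(u_1, v_1)$. Summing yields $(s^2 + (1-s)^2)(a+b) + 2s(1-s)(c+e-f-g)$. The weaker Reshetnyak-type form~\eqref{formel_w3}, i.e. $c+e-f-g \le a+b$, suffices: the total coefficient collapses to $(s + (1-s))^2 = 1$, giving nonexpansiveness.

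For $\nu = 1$, I split into three subcases according to whether $d(x_0, x_1)$ and $d(y_0, y_1)$ exceed $2\eta$. (i) If both are $\le 2\eta$, then $\hat s = \hat t = 1/2$, the reflection swaps the two components on each side, and $d_{\HH^2}$ is preserved. (ii) In the mixed case $d(x_0,x_1) \le 2\eta < d(y_0,y_1)$, one has $u_0 = x_1$, $u_1 = x_0$, while $v_0, v_1$ sit in the interior of the $y$-geodesic. Applying strong convexity~\ref{item:DSq-stronly-convex} of $d^2$ gives $d^2(x_1, v_0) + d^2(x_0, v_1) \le (1-t)(c+e) + t(a+b) - 2t(1-t)g$ with $t = 2\eta/d(y_0,y_1)$; then the sharp Hadamard inequality~\eqref{eq:reshet} in the form $c+e \le a+b+2\sqrt{fg}$ leaves the residual $2(1-t)\sqrt g(\sqrt f - 2\eta)$, which is non-positive by the case hypothesis $\sqrt f = d(x_0,x_1) \le 2\eta$. (iii) In the interior case $d(x_0,x_1), d(y_0,y_1) > 2\eta$, set $s = 2\eta/d(x_0,x_1)$ and $t = 2\eta/d(y_0,y_1)$ in $(0,1)$, apply~\eqref{formel_w1} twice (with $(s,t)$ and $(1-s,1-t)$) and sum; invoking~\eqref{eq:reshet} in its sharp form and using the calibrations $s\sqrt f = t\sqrt g = 2\eta$, the residual $2(s+t-2st)\sqrt{fg} - 2s(1-s)f - 2t(1-t)g$ simplifies to zero identically.

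The main obstacle is the interior subcase~(iii) for $\nu = 1$. A naive combination of~\eqref{formel_w1} with the weaker inequality~\eqref{formel_w3} produces the non-negative residual $(t-s)\bigl[(1-2s)f - (1-2t)g\bigr]$, which in terms of $\sqrt f, \sqrt g$ equals $2\eta(\sqrt f - \sqrt g)^2(\sqrt f + \sqrt g - 4\eta)/\sqrt{fg}$ and is strictly positive whenever $d(x_0,x_1)+d(y_0,y_1) > 4\eta$, so \eqref{formel_w3} does \emph{not} close the estimate. The cure is to retain the tighter dependence in the defining Hadamard inequality~\eqref{eq:reshet} (i.e., $c+e \le a+b+2\sqrt{fg}$ instead of $c+e \le a+b+f+g$), which is precisely calibrated to cancel with the $\eta$-soft-thresholding structure $s \, d(x_0,x_1) = t \, d(y_0,y_1) = 2\eta$ built into $\prox_{\eta d}$. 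The same sharp form of Reshetnyak, together with the single saturation condition $d(x_0,x_1) \le 2\eta$, also closes the mixed subcase~(ii).
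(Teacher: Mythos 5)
Your proof is correct and follows essentially the same route as the paper: the same three-way case split for $\nu=1$, the bound \eqref{formel_w1}/\eqref{formel_w2} applied componentwise, \eqref{formel_w3} to close the $\nu=2$ case, and the sharp inequality \eqref{eq:reshet} combined with the calibration $s\,d(x_0,x_1)=t\,d(y_0,y_1)=2\eta$ to make the residual vanish (resp.\ become nonpositive) in the remaining cases. The only differences are cosmetic --- you reparametrize the second geodesic and phrase the mixed case via strong convexity instead of substituting $s=1$ into the general estimate --- and your side remark that \eqref{formel_w3} alone cannot close the interior case correctly explains why the paper switches to \eqref{eq:reshet} there.
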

%

\begin{proof}
By~\eqref{eq:reflect_2} and since $s\coloneqq 2 \hat s \in [0,1]$ we have 
\begin{equation} \label{reflect_10}
	{\mathcal R}_{\eta G}(x)
	= R_{p_x} (x)
	= \big(\gamma_{\overset{\frown}{x_0,x_1}}(s),
		\gamma_{\overset{\frown}{x_1,x_0}}(s) \big),
\end{equation}
where $p_x$ is given by Lemma~\ref{lem:proxies}\,\ref{item:diffProx}.
Similarly, we conclude for $y \coloneqq (y_0,y_1)$
with $\hat t = \hat s$ if $\nu = 2$ and
$\hat t = \min\big\{ \tfrac{\eta}{d(y_0,y_1)} , \tfrac{1}{2} \big\}$
if $\nu = 1$. 
\begin{enumerate}[label={\arabic*.},leftmargin=0pt,itemindent=*]
\item \label{prop:had_2:case1} Let $\nu = 2$, cf.~Fig.~\ref{fig:d2tv:case1}. We obtain
	\begin{align}
		d_{\HH^2}^2 \bigl(\mathcal{R}_{\eta G}(x),\mathcal{R}_{\eta G}(y) \bigr) 
		=
		d^2 \big(\gamma_{\overset{\frown}{x_0,x_1}}(s),
			\gamma_{\overset{\frown}{y_0,y_1}}(s) \big)
			+
		d^2 \big(\gamma_{\overset{\frown}{x_1,x_0}}(s),
			\gamma_{\overset{\frown}{y_1,y_0}}(s) \big)
	\end{align}
	and applying~\eqref{formel_w2} we further have
	\begin{align}
		d^2_{\HH^2}&\bigl(\mathcal{R}_{\eta G}(x),\mathcal{R}_{\eta G}(y)\bigr) 
		\\[0.25\baselineskip]
		&\le\bigl(s^2+(1-s)^2\bigr)\bigl(d^2(x_0,y_0)+d^2(x_1,y_1)\bigr)\\
		&\quad+2(1-s)s\bigl(d^2(x_0,y_1)+d^2(x_1,y_0)-d^2(x_0,x_1)-d^2(y_0,y_1)\bigr)
		\\[0.25\baselineskip]
		&=d^2(x_0,y_0)+d^2(x_1,y_1) + 2s(s-1) \bigl(d^2(x_0,y_0)+d^2(x_1,y_1)\bigr)\\
		&\quad-2s(s-1)\bigl(d^2(x_0,y_1)+d^2(x_1,y_0)-d^2(x_0,x_1)-d^2(y_0,y_1)\bigr)
		\\[0.25\baselineskip]
		&= d^2_{\HH^2} (x,y) - 2s(1-s)\\
		&\quad\times\bigl(d^2(x_0,y_0)+d^2(x_1,y_1) + d^2(x_0,x_1)+d^2(y_0,y_1) - d^2(x_0,y_1)-d^2(x_1,y_0)\bigr).
	\end{align}
	By~\eqref{formel_w3} we know that the last factor is non-negative.
	Since $s \in [0,1]$ we conclude $2s(1-s) \ge 0$ so that
	\[
		d^2_{\HH^2}\bigl(\mathcal{R}_{\eta G}(x),\mathcal{R}_{\eta G}(y)\bigr)
		\le d^2_{\HH^2} (x,y).
	\]
	\item Let $\nu = 1$. Without loss of generality we assume
	that $d(x_0,x_1) \le d(y_0,y_1)$ and
	consequently $\tfrac{\eta}{d(y_0,y_1)} \le \tfrac{\eta}{d(x_0,x_1)}$.
	We distinguish three cases.
	\begin{enumerate}[label={\arabic{enumi}.\arabic*.},leftmargin=0pt,itemindent=*]
	\item  
	If $d(x_0,x_1) \le d(y_0,y_1) \le 2 \eta$ we
	have $s = 1$ so that by~\eqref{reflect_10} it holds
	\begin{equation}
		d_{\HH^2}^2 \bigl(\mathcal{R}_{\eta G} (x),\mathcal{R}_{\eta G} (y) \bigr) 
		=
		d^2(x_1,y_1) + d^2(x_0,y_0) =d_{\HH^2}^2(x,y).
	\end{equation}
	\item \label{prop:had_2:case23} If $2 \eta \le d(x_0,x_1) \le d(y_0,y_1)$, cf.~Fig.~\ref{fig:d2tv:case2}, we use~\eqref{formel_w1}
	to obtain 
	\begin{align}
		d^2_{\HH^2}&\bigl(\mathcal{R}_{\eta G}(x),\mathcal{R}_{\eta G}(y)\bigr) \\
		&= d^2\bigl(\gamma_{\overset{\frown}{x_0,x_1}}(s),\gamma_{\overset{\frown}{y_0,y_1}}(t)\bigr)+
		d^2\bigl(\gamma_{\overset{\frown}{x_1,x_0}}(s),\gamma_{\overset{\frown}{y_1,y_2}}(1-t)\bigr)\\[0.2\baselineskip]
		&\le \bigl(st+(1-s)(1-t)\bigr)\bigl(d^2(x_0,y_0)+d^2(x_1,y_1)\bigr)\\
		&\quad +\bigl((1-s)t+s(1-t)\bigr)\bigl(d^2(x_0,y_1)+d^2(x_1,y_0)\bigr)\\
		&\quad -2(s-s^2)d^2(x_0,x_1)-2(t-t^2)d^2(y_0,y_1)\\[0.2\baselineskip]
		&=d^2(x_0,y_0)+d^2(x_1,y_1)\\
		&\quad+(t+s-2st)\bigl(d^2(x_0,y_1)+d^2(x_1,y_0)-d^2(x_0,y_0)-d^2(x_1,y_1)\bigr)\\
		&\quad -2(s-s^2)d^2(x_0,x_1)-2(t-t^2)d^2(y_0,y_1).
	\end{align}
	Since $t,s\in(0,1]$ we get $s+t-2st>0$ and we can
	use~\eqref{eq:reshet} to estimate
	\begin{equation}
	\begin{split}
			d^2_{\HH^2}\bigl(\mathcal{R}_{\eta G}(x),\mathcal{R}_{\eta G}(y)\bigr)
				&\le d^2(x_0,y_0)+d^2(x_1,y_1)
					+(s+t-2st)2d(x_0,x_1)d(y_0,y_1)\\
				&\quad-2(s-s^2)d^2(x_0,x_1)-2(t-t^2)d^2(y_0,y_1).
		\end{split}\label{eq:tv_last_step}
	\end{equation}
	Plugging in the definitions of $s,t$ we conclude
	\begin{align}
		d^2_{\HH^2}&\bigl(\mathcal{R}_{\eta G}(x),\mathcal{R}_{\eta G}(y)\bigr)\\
		&\le d^2(x_0,y_0)+d^2(x_1,y_1)\\
		&\quad+2\Bigl(\frac{2\eta}{d(x_0,x_1)}+\frac{2\eta}{d(y_0,y_1)}-\frac{8\eta^2}{d(x_0,x_1)d(y_0,y_1)}\Bigr)d(x_0,x_1)d(y_0,y_1)\\
		&\quad-2\Bigl(\frac{2\eta}{d(x_0,x_1)}-\frac{4\eta^2}{d^2(x_0,x_1)}\Bigr)d^2(x_0,x_1)-2\Bigl(\frac{2\eta}{d(y_0,y_1)}-
		\frac{4\eta^2}{d^2(y_0,y_1)}\Bigr)d^2(y_0,y_1)\\[0.2\baselineskip]
		&=d_{\HH^2}^2\bigl((x_0,x_1),(y_0,y_1)\bigr).
	\end{align}
	\item For $d(x_0,x_1) \le 2 \eta \le d(y_0,y_1)$ we have
	$s=1$ and $t = \tfrac{2\eta}{d(y_0,y_1)}$. 
	Substituting these values into~\eqref{eq:tv_last_step} we obtain
	\begin{align}
		d^2_{\HH^2}\bigl(\mathcal{R}_{\eta G}(x),\mathcal{R}_{\eta G}(y)\bigr)&\le d^2(x_0,y_0)+d^2(x_1,y_1)\\
		&\quad+\Bigl(1 - \frac{2\eta}{d(y_0,y_1)}\Bigr)2d(x_0,x_1)d(y_0,y_1)
		-4\eta d(y_0,y_1)+8\eta^2
	\end{align}
	and using $d(x_0,x_1) \le 2 \eta$ we obtain further
	\begin{align}
		d^2_{\HH^2}\bigl(\mathcal{R}_{\eta G}(x),\mathcal{R}_{\eta G}(y)\bigr)
		&\le d^2(x_0,y_0)+d^2(x_1,y_1)
		= d_{\HH^2}^2(x,y).
	\end{align}
	\end{enumerate} 
\end{enumerate}
This finishes the proof.
\end{proof}

\section{Reflections on Convex Sets} \label{subsec:proj}
%
Next we deal with the reflection operator ${\mathcal R}_{\iota_{\textsf{D}}}$,
i.e., the reflection operator which corresponds to the orthogonal projection
operator onto
the convex set $\textsf{D}$. Unfortunately, in symmetric Hadamard manifolds,
reflections corresponding to orthogonal projections onto convex sets are in
general not nonexpansive.
Counterexamples can be found in~\cite{BH1999,FL2013}.
Unfortunately this is also true for our special set $\textsf{D}$ as the
following example with symmetric positive definite $2 \times 2$ matrices $\SPD(2)$ shows.
For the manifold ${\mathcal P}(n)$ of symmetric positive definite matrices see Appendix~\ref{app:spd}.
\begin{example}
Let $\textsf{D} = \bigl\{(x,x,x) : x\in\SPD(2)\bigr\}$ and $\vect{x},\vect{y}\in\SPD(2)^3$ be given by
\begin{align}
\vect{x} = \Biggl(
\begin{pmatrix}
20.9943 & 3.3101 \\ 3.3101 & 6.8906
\end{pmatrix} ,\begin{pmatrix}
17.2428 & 4.3111 \\ 4.3111 & 9.9950
\end{pmatrix} ,\begin{pmatrix}
19.4800 & 19.8697 \\ 19.8697 & 21.2513
\end{pmatrix}
\Biggr),\\
\vect{y}=\Biggl(
\begin{pmatrix}
7.5521 & 6.0509 \\ 6.0509 & 19.8961
\end{pmatrix}, \begin{pmatrix}
6.4261 & 5.7573 \\ 5.7573 & 15.2775
\end{pmatrix}, \begin{pmatrix}
12.4792 & 12.9202 \\ 12.9202 & 13.8620
\end{pmatrix}
\Biggr).
\end{align}
The distance between $\vect{x}$ and $\vect{y}$ is  $d_{{\mathcal P}(2)}(\vect{x},\vect{y}) \approx 2.2856.$\\
The projection onto $D$ can be calculated using the gradient descent method from \cite{ATV13}. We obtain 
\begin{align}
 \prox_{\iota_{\textsf{D}}}(\vect{x})  \approx (1,1,1)^\tT \otimes \begin{pmatrix}
 13.8254 & 8.7522 \\ 8.7522 & 10.8436
 \end{pmatrix},\\
  \prox_{\iota_{\textsf{D}}}( \vect{y})  \approx (1,1,1)^\tT \otimes \begin{pmatrix}
  8.3908 & 8.2797 \\ 8.2797 & 12.4013
  \end{pmatrix},
\end{align}
where $\otimes$ denotes the Kronecker product.
For the distance between them we obtain 
\[d_{{\mathcal P}(2)}({\mathcal R}_{\iota_{\textsf{D}}}\vect{x},{\mathcal R}_{\iota_{\textsf{D}}}\vect{y})
\approx 2.7707 > d_{{\mathcal P}(2)}(\vect{x},\vect{y}),\]
i.e., 
the reflection is not nonexpansive. 
The computations were done with machine precision in \textsc{Matlab} and the results are rounded to four digits.
\end{example}
The situation changes if we consider manifolds with constant
curvature $\kappa$.
We do not restrict ourselves to Hadamard manifolds now, but deal instead with
the model spaces for constant curvature ${\mathcal M}^d_\kappa$ which are
defined as follows:
For $\kappa>0$ the model space ${\mathcal M}^d_\kappa$ is obtained from the $d$-dimensional
sphere by multiplying the distance with $\frac{1}{\sqrt{\kappa}}$; 
for $\kappa < 0$ we get ${\mathcal M}^d_\kappa$ from the $d$-dimensional
hyperbolic plane by multiplying the distance with $\frac{1}{\sqrt{-\kappa}}$; 
finally ${\mathcal M}^n_0$ is the $d$-dimensional Euclidean space. 
The model spaces inherit their geometrical properties from the three Riemannian
manifolds that define them. 
Thus, if $\kappa<0$, then ${\mathcal M}^d_\kappa$ is uniquely geodesic, balls
are convex and we have a counterpart for the hyperbolic law of cosine. 
By $r_{\kappa}$ we denote the convex radius of the model
spaces ${\mathcal M}^d_\kappa$ which is the supremum of radii of balls, which
are convex in ${\mathcal M}^d_\kappa$, i.e.,
\begin{equation}
r_{\kappa}\coloneqq \begin{cases}
\infty,\quad &\kappa\le0,\\
\frac{\pi}{\sqrt{k}},\quad &\text{otherwise}.
\end{cases}
\end{equation} 
To show, that reflections at convex sets in manifolds with constant curvature are nonexpansive, we need the following properties of projections onto convex sets.
\begin{proposition}\cite[Proposion II.2.4, Exercise II.2.6(1)]{BH1999}\label{prop:proj}
	
	Let X be a complete CAT$(\kappa),\ \kappa\in\RR,$ space, $V=\{x:d(x,V)\le r_\kappa/2\},\ x\in V$, and $C\subset X$ is closed and convex. Then the following statements hold
	\begin{enumerate}
		\item The metric projection $\Pi_C(x)$ of $x$ onto $C$ is a singleton.
		\item If $y\in[x,\Pi_C(x)]$, then $\Pi_C(x) = \Pi_C(y)$.
		\item If $x\notin C,\ y\in C,$ and $ y\neq \Pi_C(x)$, then $\angle_{\Pi_C(x)}(x,y)\ge\frac{\pi}{2}$, where $\angle_{\Pi_C(x)}(x,y)$ denotes the angle at $\Pi_C(x)$ between the geodesics $\gamma_{\overset{\frown}{\Pi_C(x),x}}$ and $\gamma_{\overset{\frown}{\Pi_C(x),y}}$.
	\end{enumerate}	
\end{proposition}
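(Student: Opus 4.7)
The plan is to establish the three assertions in sequence, exploiting the strict (indeed uniform) convexity of the distance function inherent to CAT($\kappa$) spaces together with Alexandrov comparison geometry; the radius condition $d(x,C) \le r_\kappa/2$ ensures that all relevant geodesics are unique and that the CAT($\kappa$) inequality applies in its sharp form to every triangle I shall consider.

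\textbf{Step 1 (existence and uniqueness of $\Pi_C(x)$).} I would set $\delta := \inf_{c \in C} d(x,c)$ and take a minimizing sequence $(c_n) \subset C$. For $c_n, c_m$ the midpoint $m_{n,m}$ of the unique geodesic $[c_n,c_m]$ lies in $C$ by convexity. Comparing the triangle $(x,c_n,c_m)$ with its Euclidean/model comparison triangle and using the CAT($\kappa$) inequality applied at the midpoint yields a bound of the form
\begin{equation}
 d(c_n,c_m)^2 \le 2d(x,c_n)^2 + 2d(x,c_m)^2 - 4d(x,m_{n,m})^2 \le 2(d(x,c_n)^2 + d(x,c_m)^2) - 4\delta^2,
\end{equation}
which tends to zero. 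Thus $(c_n)$ is Cauchy, and completeness together with closedness of $C$ produces a limit $p \in C$ with $d(x,p)=\delta$. The same midpoint argument applied to two putative minimizers shows uniqueness, giving $\Pi_C(x) = \{p\}$.

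\textbf{Step 2 (idempotency along $[x,\Pi_C(x)]$).} For $y \in [x,\Pi_C(x)]$ the geodesic identity reads $d(x,\Pi_C(x)) = d(x,y) + d(y,\Pi_C(x))$. For any $z \in C$, combine the optimality $d(x,z) \ge d(x,\Pi_C(x))$ with the triangle inequality $d(x,z) \le d(x,y) + d(y,z)$ to conclude $d(y,z) \ge d(y,\Pi_C(x))$. Hence $\Pi_C(x)$ is also a metric projection of $y$ onto $C$, and by the uniqueness already established in Step 1 we obtain $\Pi_C(y) = \Pi_C(x)$.

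\textbf{Step 3 (the obtuse angle condition).} This is the main obstacle and I would prove it by contradiction. Write $p := \Pi_C(x)$ and assume $\alpha := \angle_p(x,y) < \pi/2$ for some $y \in C \setminus \{p\}$. Parametrize the geodesic from $p$ toward $y$ by $p_t = \gamma_{\overset{\frown}{p,y}}(t)$; by convexity of $C$ we have $p_t \in C$ for all $t \in [0,1]$. Let $\bar{x},\bar{p},\bar{p}_t$ be a comparison triangle in the model space $M^2_\kappa$. By definition of the Alexandrov angle, the comparison angle $\bar{\alpha}_t := \bar{\angle}_p(x,p_t)$ satisfies $\bar{\alpha}_t \to \alpha < \pi/2$ as $t \to 0^+$. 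The $\kappa$-law of cosines in $M^2_\kappa$, expanded in $t$, gives
\begin{equation}
 \bar{d}(\bar{x},\bar{p}_t)^2 = d(x,p)^2 - 2t\, d(x,p) d(p,y) \cos \bar{\alpha}_t + O(t^2),
\end{equation}
so $\bar{d}(\bar{x},\bar{p}_t) < d(x,p)$ for all sufficiently small $t > 0$. The CAT($\kappa$) inequality then yields $d(x,p_t) \le \bar{d}(\bar{x},\bar{p}_t) < d(x,p)$, contradicting minimality of $p$. Hence $\alpha \ge \pi/2$.

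The delicate ingredient is Step 3, where I must make sure that the first-order expansion of the comparison distance has the claimed sign uniformly in $t$, which relies on continuity of the comparison angle $\bar{\alpha}_t$ in $t$ and on the ambient radius bound guaranteeing that all comparison triangles lie in the appropriate region of $M^2_\kappa$. The rest is standard bookkeeping with the $\kappa$-law of cosines.
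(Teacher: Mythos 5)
This proposition is quoted in the paper from Bridson--Haefliger without proof, so I can only measure your argument against the standard one in that source, which it follows in outline. Step 2 is complete and correct (note you implicitly also use uniqueness of the projection at $y$, which holds since $d(y,C)\le d(x,C)$). Step 1, however, is only valid for $\kappa\le 0$ as written: the displayed estimate is the CN (Bruhat--Tits) inequality, which characterizes CAT$(0)$ and is genuinely false in CAT$(\kappa)$ spaces with $\kappa>0$, even under the radius restriction. On the unit sphere take $x$ the north pole and $c_n,c_m$ on the equator at angular distance $\theta$: then $d(x,c_n)=d(x,c_m)=d(x,m_{n,m})=\pi/2$, so the right-hand side of your inequality is $0$ while the left-hand side is $\theta^2>0$. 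For $\kappa>0$ the Cauchy property of the minimizing sequence must instead be extracted from the uniform convexity of the model distance on balls of radius strictly smaller than $r_\kappa/2=\pi/(2\sqrt{\kappa})$ in ${\mathcal M}^2_\kappa$; this is precisely where the hypothesis $d(x,C)<r_\kappa/2$ is needed (with equality, the pole/equator example shows uniqueness actually fails, so the strict form of the bound is essential in the positive-curvature case, which is the case the paper actually uses).

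In Step 3, the claim that $\bar\alpha_t\to\alpha$ ``by definition of the Alexandrov angle'' is not accurate: the definition uses comparison angles $\bar\angle_p(x_s,p_t)$ with \emph{both} points tending to the vertex, and monotonicity of comparison angles only yields $\bar\alpha_t\ge\alpha$ together with existence of the limit, not equality. The equality with the far endpoint $x$ held fixed is true in CAT$(\kappa)$ spaces, but it is essentially the first variation formula and requires its own argument or citation. The standard repair avoids the claim entirely: since $\alpha<\pi/2$, the definition of the angle (the limit exists by monotonicity of comparison angles, using the perimeter bound guaranteed by the radius condition) provides $s_0>0$ and $\delta>0$ such that $\bar\angle_p(x_{s_0},p_t)<\tfrac{\pi}{2}-\delta$ for all small $t$, where $x_{s_0}\in[p,x]$ with $d(p,x_{s_0})=s_0$; the $\kappa$-law of cosines in the small comparison triangle then gives $d(x_{s_0},p_t)<s_0$ once $d(p,p_t)$ is small compared with $s_0\sin\delta$, and hence $d(x,p_t)\le d(x,x_{s_0})+d(x_{s_0},p_t)<d(x,p)$, contradicting minimality because $p_t\in C$ by convexity. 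With these two repairs (uniform convexity of small balls in ${\mathcal M}^2_\kappa$ in Step 1 for $\kappa>0$, and the two-point comparison-angle argument or an explicit appeal to the first variation formula in Step 3), your proof becomes the standard one.
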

%
\begin{theorem}\label{thm:refl-iota-nonexp}
	Let $k\in\RR$ and $d\in\NN$. 
	Suppose that ${\mathcal C}$ is a nonempty closed and convex subset
	of ${\mathcal M}^d_\kappa$. Let $x,y\in{\mathcal M}^d_\kappa$ 
	such that
	$\operatorname{dist}(x,{\mathcal C}),\operatorname{dist}(y,{\mathcal C})\le r_\kappa$ be given.
	Then 
	\begin{equation}
	d(\mathcal{R}_{\iota_{\mathcal C}}x,\mathcal{R}_{\iota_{\mathcal C}}y)\le d(x,y).
	\end{equation}
\end{theorem}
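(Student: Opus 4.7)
The plan is to reduce the statement to a direct computation in the ambient linear model of ${\mathcal M}^d_\kappa$ (Euclidean space for $\kappa=0$, the Lorentzian hyperboloid in $\mathbb{R}^{d,1}$ for $\kappa<0$, and the unit sphere in $\mathbb{R}^{d+1}$ for $\kappa>0$), where $\cos_\kappa$ of the distance is a bilinear form and point reflections admit the closed form $R_p(z) = 2\langle z,p\rangle\,p - z$ with the appropriate ambient inner product. The crucial input is the obtuse-angle property of projections, Proposition~\ref{prop:proj}(iii).

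I would first dispose of the degenerate cases. If $x\in{\mathcal C}$, then $\mathcal{R}_{\iota_{\mathcal C}}(x)=x$; writing $p_y=\Pi_{\mathcal C}(y)$ and $y'=R_{p_y}(y)$, Proposition~\ref{prop:proj}(iii) gives $\angle_{p_y}(x,y)\ge \pi/2$, while the point reflection at $p_y$ yields $\angle_{p_y}(x,y')=\pi-\angle_{p_y}(x,y)\le \pi/2$. Since the $\kappa$-law of cosines makes the third side of a triangle monotone in the included angle (with the two sides $d(x,p_y)$ and $d(p_y,y)=d(p_y,y')$ fixed), we obtain $d(x,y')\le d(x,y)$. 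The case $y\in{\mathcal C}$ is symmetric, and the case $p_x=p_y$ follows because $R_{p_x}$ is a local isometry. In the main case $x,y\notin{\mathcal C}$ with $p_x\coloneqq\Pi_{\mathcal C}(x)$ and $p_y\coloneqq\Pi_{\mathcal C}(y)$ distinct, set $a\coloneqq d(x,p_x)$, $b\coloneqq d(y,p_y)$, $c\coloneqq d(p_x,p_y)$; two applications of Proposition~\ref{prop:proj}(iii) give $\alpha\coloneqq\angle_{p_x}(x,p_y)\ge\pi/2$ and $\beta\coloneqq\angle_{p_y}(y,p_x)\ge\pi/2$, while the reflections satisfy $d(x',p_x)=a$, $d(y',p_y)=b$ with supplementary angles $\pi-\alpha,\pi-\beta\le \pi/2$.

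The key step is the identity
\[
    \cos_\kappa d(x,y) - \cos_\kappa d(x',y') \;=\; \sgn(\kappa)\cdot 2\sin_\kappa(c)\bigl(\cos_\kappa(a)\sin_\kappa(b)\cos\beta + \cos_\kappa(b)\sin_\kappa(a)\cos\alpha\bigr),
\]
obtained by expanding $\langle R_{p_x}(x), R_{p_y}(y)\rangle$ in the ambient model, so that the term $\langle x,y\rangle$ cancels, and invoking the $\kappa$-law of cosines twice (in $\triangle(x,p_x,p_y)$ and in $\triangle(y,p_y,p_x)$) to replace $\langle x,p_y\rangle$ and $\langle y,p_x\rangle$ by expressions in $a,b,c,\alpha,\beta$. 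The bounds $\cos\alpha,\cos\beta\le 0$ then give the stated inequality via the monotonicity of $\cos_\kappa$ in the distance. The flat case reduces cleanly to $d(x,y)^2 - d(x',y')^2 = -4c\,(a\cos\alpha + b\cos\beta)\ge 0$. A pleasant feature of this approach is that the computation is dimension-free: the dihedral angle between the 2-planes spanned by $\{x,p_x,p_y\}$ and $\{y,p_x,p_y\}$ cancels identically in the difference, so no a priori reduction to a single totally geodesic 2-submanifold is required. The main obstacle will be the careful bookkeeping of signs across the three curvature regimes, and verifying that under the hypothesis $\operatorname{dist}(x,{\mathcal C}),\operatorname{dist}(y,{\mathcal C})\le r_\kappa$ the quantities $a,b,c$ lie in the range where $\cos_\kappa$ and $\sin_\kappa$ carry the signs required by the argument and where the projections exist uniquely by Proposition~\ref{prop:proj}(1).
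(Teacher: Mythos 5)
Your proposal is correct, and it takes a genuinely different route from the paper's proof. The paper treats the three curvature regimes separately: for $\kappa<0$ it cites \cite{FL2013}, for $\kappa=0$ it invokes the Hilbert space result, and only the spherical case is argued in detail, by applying the spherical law of cosines in several geodesic triangles to obtain the ``parallelogram'' identities of the form $\cos d(x,y)+\cos d(x',y)=2\cos d(x,c_x)\cos d(c_x,y)$ together with $\cos d(x,c_y)\le\cos d(x',c_y)$, and then deriving a contradiction from the assumption $d(x',y')>d(x,y)$. You instead compute once in the ambient quadratic-form model, where the term $\langle x,y\rangle$ cancels and the law of cosines (applied in $\triangle(x,p_x,p_y)$ and $\triangle(y,p_y,p_x)$) turns the difference into your explicit defect identity; I verified it in all three regimes --- on the unit sphere it reads $\cos d(x,y)-\cos d(x',y')=2\sin c\,\bigl(\cos a\,\sin b\,\cos\beta+\cos b\,\sin a\,\cos\alpha\bigr)$, in the hyperboloid model the analogous formula with $\cosh,\sinh$ and the opposite overall sign, and in the flat case your displayed quadratic identity --- so the conclusion follows directly from $\cos\alpha,\cos\beta\le 0$ (Proposition~\ref{prop:proj}), with your degenerate cases ($x\in\mathcal C$, $p_x=p_y$; note the point reflection is even a global isometry of the model space) handled soundly. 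What your approach buys is a single constructive argument valid for every sign of $\kappa$, with a quantitative contraction defect and no appeal to \cite{FL2013} or to a contradiction; what the paper's route buys is an intrinsic argument that never leaves the manifold and stays close to the published hyperbolic proof it adapts. One caveat, which you yourself flag: on the sphere your sign argument needs $\cos_\kappa(a),\cos_\kappa(b)\ge 0$, i.e.\ $\operatorname{dist}(x,\mathcal C),\operatorname{dist}(y,\mathcal C)\le\pi/(2\sqrt{\kappa})$, which is stronger than the stated hypothesis; but the paper's own proof has the same hidden requirement (it multiplies $\cos d(y,c_x)\le\cos d(y',c_x)$ by $2\cos d(x,c_x)$, which needs $\cos d(x,c_x)\ge0$, and Proposition~\ref{prop:proj} is only stated within distance $r_\kappa/2$ of the set), so this is an imprecision inherited from the theorem statement rather than a gap in your argument --- just make the restriction explicit when writing it up.
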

%
\begin{proof} 
	The case $\kappa<0$ was proved in~\cite{FL2013}.
	For $\kappa = 0$ the assumption follows from the Hilbert space setting. 
	We adapt the proof from~\cite{FL2013} to show the case $\kappa>0$.
	Due to the structure of the model spaces it is sufficient to show the nonexpansiveness for $\kappa = 1$.
	
	For simplicity, we denote $c_x = \Pi_C(x),\ c_y = \Pi_C(y),\ x'=R_C(x),\ y' = R_C(y),\ \alpha= \angle_{c_x}(x,c_y),$ and $\alpha'= \angle_{c_x}(y,c_x)$. Notice, that by Proposition \ref{prop:proj}, $\alpha,\alpha' \ge \frac{\pi}{2}$, and $d(x,y)\le\pi,\forall y,x\in\mathbb{S}^n$. Consider the geodesic triangles $\triangle xc_xc_y$ and $\triangle x'c_xc_y$. By the spherical law of cosines we have that
	\begin{equation}
	\cos \bigl(d(x,c_y)\bigr) = \cos\bigl( d(x,c_x)\bigr) \cos\bigl( d(c_x,c_y)\bigr)+\sin\bigl( d(x,c_x)\bigr)\sin\bigl( d(c_x,c_y)\bigr)\cos(\alpha)
	\end{equation}
	and 
	\begin{equation}
	\cos \bigl(d(x',c_y)\bigr) = \cos\bigl( d(x',c_x)\bigr) \cos\bigl( d(c_x,c_y)\bigr)+\sin\bigl( d(x',c_x)\bigr)\sin\bigl( d(c_x,c_y)\bigr)\cos(\pi-\alpha).
	\end{equation}
	Since $d(x,c_x)=d(x',c_x)$ and $\cos(\pi-\alpha)\ge0$ we get
	\begin{equation}
	 \cos \bigl(d(x,c_y)\bigr)\le\cos \bigl(d(x',c_y)\bigr).\label{eq:reflection_to_prox_x}
	\end{equation}
	Similarly we get 
	\begin{equation}
	\cos \bigl(d(y,c_x)\bigr)\le\cos\bigl( d(y',c_x)\bigr).\label{eq:reflection_to_prox_y}
	\end{equation}	
	Consider now the geodesic triangles $\triangle x'c_xc_y$ and $\triangle x'c_xc_y$ and denote $\beta = \angle_{c_x}(x,y)$. Applying again the spherical law of cosines we obtain that
	\begin{equation}
	\cos\bigl( d(x,y)\bigr) = \cos \bigl(d(x,c_x)\bigr) \cos\bigl( d(c_x,y)\bigr)+\sin \bigl(d(x,c_x)\bigr)\sin\bigl( d(c_x,y)\bigr)\cos(\beta)
	\end{equation}
	and
	\begin{equation}
	\cos \bigl(d(x',y)\bigr) = \cos \bigl(d(x',c_x)\bigr) \cos \bigl(d(c_x,y)\bigr)+\sin \bigl(d(x',c_x)\bigr)\sin \bigl(d(c_x,y)\bigr)\cos(\pi-\beta).
	\end{equation}
	Since $d(x,c_x)=d(x',c_x)$ and $\cos(\pi-\beta)= -\cos\beta$, we get by adding
	\begin{align}
	\cos \bigl(d(x,y)\bigr)+\cos\bigl( d(x',y) \bigr)= 2\cos \bigl(d(x,c_x)\bigr) \cos \bigl(d(c_x,y)\bigr)\label{eq:cos_contra1}
	\end{align}
	and similar
	\begin{align}
	\cos \bigl(d(x',y')\bigr)+\cos\bigl( d(x,y')\bigr) &= 2\cos \bigl(d(x,c_x)\bigr) \cos\bigl( d(c_x,y')\bigr)\label{eq:cos_contra2},\\
	\cos\bigl( d(x,y)\bigr)+\cos \bigl(d(x,y')\bigr) &= 2\cos \bigl(d(x,c_y)\bigr) \cos\bigl( d(c_y,y)\bigr),\label{eq:cos_contra3}\\
	\cos\bigl( d(x',y')\bigr)+\cos\bigl( d(x',y)\bigr) &= 2\cos\bigl( d(x',c_y)\bigr) \cos\bigl( d(c_y,y)\bigr).\label{eq:cos_contra4}	
	\end{align}
	Suppose now that $d(x',y')>d(x,y)$. From \eqref{eq:cos_contra1}, \eqref{eq:reflection_to_prox_y}, and \eqref{eq:cos_contra2} we obtain
	\begin{align}
	\cos\bigl( d(x',y')\bigr)+\cos\bigl( d(x,y')\bigr) &= 2\cos \bigl(d(x,c_x) \bigr)\cos\bigl( d(c_x,y')\bigr)\\
	&\ge 2\cos\bigl( d(x,c_x) \bigr)\cos\bigl( d(c_x,y)\bigr)\\
	&=\cos \bigl(d(x,y)\bigr)+\cos\bigl( d(x',y)\bigr)\\
	&>\cos \bigl(d(x',y')\bigr)+\cos\bigl( d(x',y)\bigr)
	\end{align}
	which implies $d(x,y')<d(x',y)$. Using  \eqref{eq:cos_contra3}, \eqref{eq:reflection_to_prox_x}, and \eqref{eq:cos_contra4} we get
	\begin{align}
	\cos\bigl(d(x,y)\bigr)+\cos \bigl(d(x,y')\bigr) &= 2\cos\bigl(d(x,c_y)\bigr) \cos\bigl( d(c_y,y)\bigr)\\
	&\le 2\cos\bigl( d(x',c_y)\bigr) \cos \bigl(d(c_y,y)\bigr)\\
	&=\cos\bigl( d(x',y')\bigr)+\cos\bigl( d(x',y)\bigr)\\	
	&<\cos\bigl( d(x,y)\bigr)+\cos\bigl( d(x',y)\bigr)
	\end{align}
	which is a contradiction to $d(x,y')<d(x',y)$, thus the result follows.
\end{proof}
		
%
\section{Numerical Examples} \label{sec:numerics}
In this section, we demonstrate the performance of the Parallel DR Algorithm \ref{alg:DR_par_had} (PDRA)
by numerical examples. We chose a constant step size $\lambda_r\coloneqq \lambda \in (0,1)$ in the reflections
for all $r \in \mathbb N$, where the specific value of $\lambda$ is stated in each experiment.

We compare our algorithm with two other approaches to minimize the functional ${\mathcal E}$ in~\eqref{task}, 
namely 
the Cyclic Proximal Point Algorithm (CPPA)~\cite{BBSW2015,WDS2014} 
and the 
Half-Quadratic Minimization Algorithm in its multiplicative form (HQMA)~\cite{BCHPS15}. 
Let us briefly recall these algorithms.
\begin{description}
 \item[CPPA.] Based on the splitting \eqref{task_sum_TV} of our functional~\eqref{task} 
 the CPPA computes the iterates
 \[
 u^{(r+1)} = \prox_{\eta_r{\varphi}_5} \circ \ldots \circ \prox_{\eta_r{\varphi}_1} \bigl( u^{(r)} \bigr).
 \]
 The algorithm converges for any starting point $u^{(0)}$ to a minimizer of the functional~\eqref{task} 
 supposed that
 \[
 \sum_{r \in \mathbb N} \eta_r = +\infty, \quad  \sum_{r \in \mathbb N} \eta_r^2 < +\infty.
 \]
 Therefore the sequence  $\{\eta_r\}_r$ must decrease.
 In our computations we chose $\eta_r\coloneqq \frac{\eta}{r+1}$, where the specific value of $\eta$ is addressed in the experiments.
 
  \item[HQMA.] The HQMA cannot be directly applied to ${\mathcal E}$, but to its
  differentiable substitute
  \begin{equation}
{\mathcal E}_\delta (u) \coloneqq\frac{1}{2}
\sum_{(i,j) \in {\mathcal V}} d (f_{i,j},u_{i,j})^2
+ \alpha 
\biggl(
\sum_{(i,j) \in {\mathcal G}} \varphi\bigl(d (u_{i,j},u_{i+1,j}) \bigr)
+\sum_{(i,j) \in {\mathcal G}} \varphi\bigl(d (u_{i,j},u_{i,j+1})\bigr)
\biggr),
\end{equation}
where $\varphi(t)\coloneqq \sqrt{t^2+\delta^2}$, $\delta > 0$. For small $\delta$ the functional ${\mathcal E}_\delta$ approximates  ${\mathcal E}$.
Other choices of $\varphi$ are possible, see, e.g., \cite{BCHPS15,NN2005}.
The functional can be rewritten as
\begin{align}
\begin{split}
{\mathcal E}_\delta (u)  
=&\min_{\vect{v},\vect{w}\in\RR^{N,M}}\Biggl\{\frac{1}{2}
\sum_{(i,j) \in {\mathcal V}} d (f_{i,j},u_{i,j})^2\\
&+ \alpha
\biggl(
\sum_{(i,j) \in {\mathcal G}}v_{i,j}d^2 (u_{i,j},u_{i+1,j}) +\psi(v_{i,j})+ w_{i,j}d^2 (u_{i,j},u_{i,j+1})+\psi(w_{i,j})
\biggr)\Biggr\}
\end{split}\\
=&\min_{\vect{v},\vect{w}\in\RR^{N,M}} E(u,\vect{v},\vect{w}),
\end{align}
where $\psi(s)\coloneqq \min_{s\in\RR}\bigl\{t^2s-\varphi(t)\bigr\}$, see~\cite{BCHPS15}. 
Then an alternating algorithm is used to minimize the right-hand side, i.e., the HQMA computes
\begin{align}
\bigl(\vect{v}^{(r)},\vect{w}^{(r)}\bigr) &= \argmin_{\vect{v},\vect{w}\in\RR^{N,M}} \operatorname{E}(u^{(r)},\vect{v},\vect{w}),\label{eq:minv_1}\\
u^{(r+1)} &= \argmin_{u\in\mathcal{H}^{N,M}} \operatorname{E}(u,\vect{v}^{(r)},\vect{w}^{(r)}).\label{eq:minu}
\end{align}
For \eqref{eq:minv_1} there exists an analytic expression, while the minimization in \eqref{eq:minu} is done with a Riemannian Newton method \cite{AMS08}. 
The whole HQMA can be considered as a quasi-Newton algorithm to minimize ${\mathcal E}_\delta$.
The convergence of $\{u^{(r)} \}_r$ to a minimizer of ${\mathcal E}_\delta$ is ensured by  \cite[Theorem 3.4]{BCHPS15}.
\end{description}

In order to compare the algorithms, we implemented them in a common framework in \textsc{Matlab}. 
They rely on the same manifold functions, like the distance function, the proximal mappings, and the exponential and logarithmic map.
These basic ingredients as well as the Riemannian Newton step in the HQMA, especially the Hessian needed therein, 
were implemented in C++ and imported into \textsc{Matlab} using \lstinline!mex!-interfaces while for all algorithms
the main iteration was implemented in \textsc{Matlab} for convenience.
The implementation is based on the \lstinline!Eigen! library%
\footnote{open source, available at
\href{http://eigen.tuxfamily.org}{http://eigen.tuxfamily.org}}
3.2.4.
All tests were run on a MacBook Pro running Mac OS X 10.11.1, Core i5, 2.6 GHz,
with 8 GB RAM using \textsc{Matlab} 2015b and the clang-700.1.76 compiler.

As \emph{initialization} of the iteration we use $u^{(0)}=f$ if all pixels of the initial image $f$ are known
and a nearest neighbor initialization for the missing pixels otherwise.
As \emph{stopping criterion} we employ  
\(\epsilon^{(r)}\coloneqq \dist\bigl(x^{(r)},x^{(r-1)}\bigr) < \epsilon\) often
in combination with a maximal number of permitted iteration steps $r_{\max}$.

We consider three manifolds: 
\begin{enumerate}[label={\arabic*.}]
 \item The space of univariate Gaussian probability distributions with the Fisher metric: 
 As in \cite{AV2014},  we associate a univariate Gaussian probability distribution to each image pixel in Subsection \ref{sec:gaussian}.
 Then we denoise this image by our approach.
 
 \item Symmetric positive definite $2\times 2$ matrices of determinant 1 with the affine invariant Riemannian metric:
Following an idea of \cite{CF2009}, we model structure tensors as elements in this space
and denoise the tensor images in Subsection~\ref{subsec:numerics:SSPD}.

 \item Symmetric positive definite $3 \times 3$ matrices with the affine invariant Riemannian metric:
 We are interested in denoising diffusion tensors in DT-MRI in Subsection~\ref{subsec:numerics:SPD}.
 While the first two experiments deal only with the denoising of images, we further perform a combined inpainting and denoising approach 
 in Subsection~\ref{subsec:numerics:InpaintSPD}.
\end{enumerate}
The first two settings are manifolds with constant curvature $-\frac12$.
Indeed, Appendix \ref{app:hn} shows that there are isomorphisms between these spaces
and the hyperbolic manifold $\Hn_{\mathrm{M}}$. Our implementations 
use  these isomorphisms to work finally on the hyperbolic manifold $\Hn_{\mathrm{M}}$.
The symmetric positive definite matrices in the third setting form a symmetric Hadamard manifold,
but its curvature is not constant. Although the reflection ${\mathcal R}_{\iota_D}$ is in general not nonexpansive
which is required in the convergence proof of Theorem \ref{th:kras}, we observed convergence in all
our numerical examples.

\subsection{Univariate Gaussian Distributions}\label{sec:gaussian}
\begin{figure}[tbp]
	\centering
	\begin{subfigure}{0.3\textwidth}
		\centering
		\includegraphics{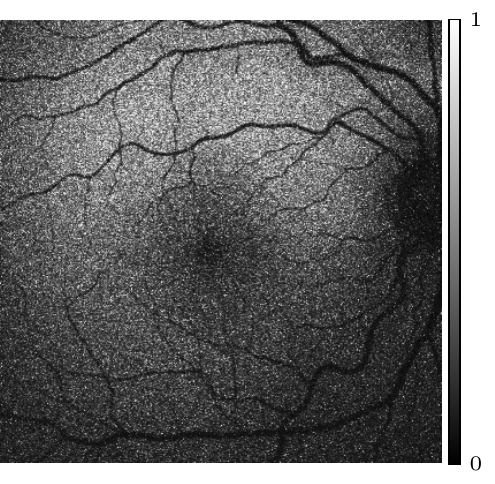}
		\vspace{-0.5\baselineskip}
		\caption[]{First image $g^{(1)}$}
	\end{subfigure}
	\hspace{3em}
	\begin{subfigure}{0.3\textwidth}
		\centering
		\includegraphics{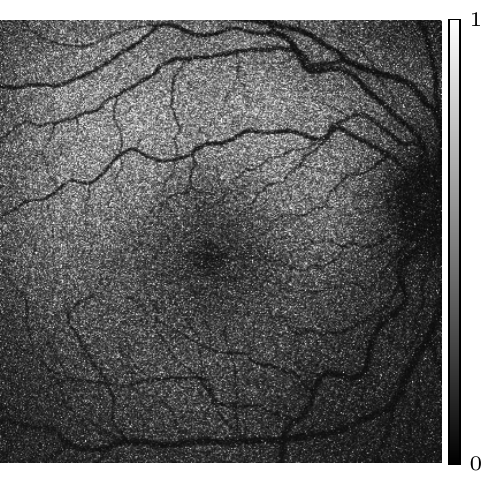}
		\vspace{-0.5\baselineskip}
		\caption[]{Last image $g^{(20)}$}
	\end{subfigure}\\
	\begin{subfigure}[t]{0.3\textwidth}
		\centering
		\includegraphics{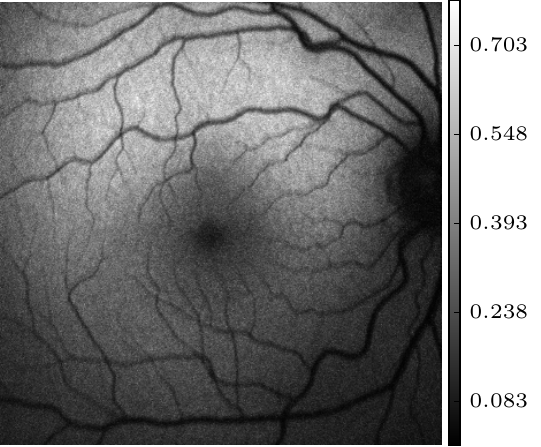}
		\caption[]{Original mean}
	\end{subfigure}
	\hspace{3em}
	\begin{subfigure}[t]{0.3\textwidth}
		\centering
		\includegraphics{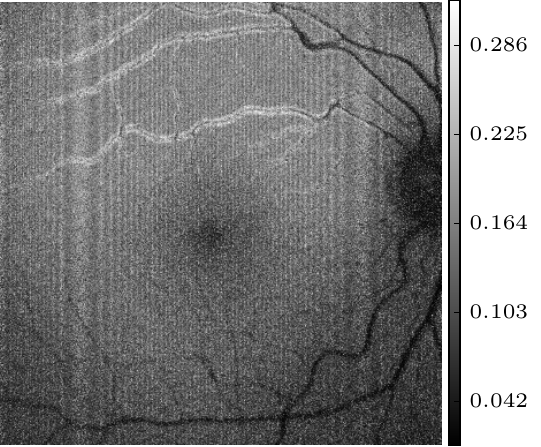}
		\caption[]{Original standard deviation}
	\end{subfigure}	
	\\
	\begin{subfigure}[t]{0.3\textwidth}
		\centering
		\includegraphics{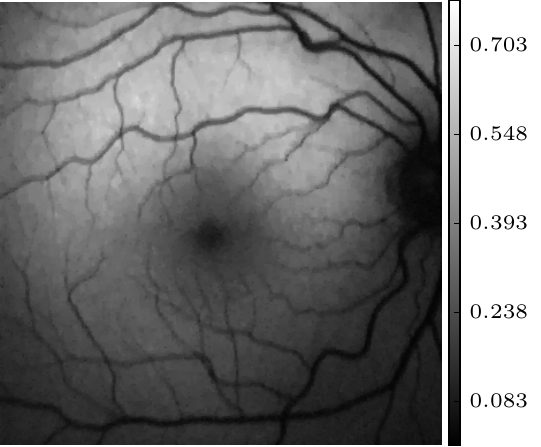}
		\caption[]{Restored mean}
	\end{subfigure}
	\hspace{3em}
	\begin{subfigure}[t]{0.3\textwidth}
		\centering
		\includegraphics{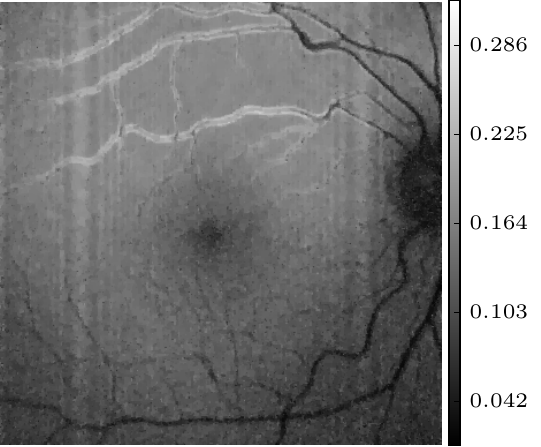}
		\caption[]{Restored standard deviation}
	\end{subfigure}
	\caption[]{Denoising of the retina data using model \eqref{task} with \(\alpha=0.2\) and the PDRA ($\eta = \tfrac{1}{2}$, $\lambda =\tfrac{9}{10}$). 
	The restored image keeps the main features, e.g. veins in the mean and their movement
	in the variance in an area around them.}\label{fig:RetinaPictures}
\end{figure}

In this section, we deal with a series of $m$ similar gray-value images
$\big( g^{k}_{i,j} \big)_{(i,j) \in {\mathcal G}}$, $k=1,\ldots,m$.
We assume that the gray-values $g^{k}_{i,j}$, $k=1,\ldots,m$, of each pixel $(i,j) \in {\mathcal G}$
are realizations of a univariate Gaussian random variable with distribution $N(\mu_{i,j},\sigma_{i,j})$. 
We estimate the corresponding mean and the standard deviation
 by the maximum
likelihood estimators
\begin{equation} \label{ML}
\mu_{i,j} = \frac{1}{m}\sum_{k=1}^m g^{k}_{i,j},\quad
\sigma_{i,j} =\sqrt{ \frac{1}{m}\sum_{k=1}^m\bigl(g^{k}_{i,j}-\mu_{i,j}\bigr)^2}.
\end{equation}
We consider images $f\colon \mathcal{G}\rightarrow\mathcal{N}$ mapping into the Riemannian
manifold $\mathcal{N}$ of univariate non-degenerate Gaussian probability distributions, parameterized by
the mean and the standard deviation, with the Fisher metric.
For the definition of the Fisher metric as well as its relation to the hyperbolic manifold,
see Appendix \ref{app:hn}.

In our numerical examples we use a sequence of \( m=20 \) images $\big( g^{k}_{i,j} \big)_{i,j = 1}^{384}$ of
size \(384\times384\) taken from the same retina by a CCD (coupled charged-device) camera,
cf.~\cite[Fig.~13]{AV2014}, 
over a very short time frame, each with a very short exposure time.
Hence, we have noisy images that are also affected by the movement of the eye.
In the top row of Fig. \ref{fig:RetinaPictures} we see the first and the last image of this sequence.
From these images, we obtain the ---still noisy--- image $f\colon \mathcal{G}\rightarrow\mathcal{N}$ with
\(
\big( f_{i,j} \big)_{i,j=1}^{384} = \big( (\mu_{i,j},\sigma_{i,j}) \big)_{i,j=1}^{384} 
\)
depicted in the middle row of Fig. \ref{fig:RetinaPictures} by \eqref{ML}.
We denoise this image by minimizing the functional \eqref{task} with $\alpha = 0.2$ by the
PDRA ($\eta = \tfrac{1}{2}$, $\lambda =\tfrac{9}{10}$).
The result is shown in the bottom row of Fig. \ref{fig:RetinaPictures}.

\begin{table}
	\centering
	\begin{tabular}{lcccc}
		\toprule
		\multirow{2}{*}{$\eta$} & \multirow{2}{*}{CPPA} & \multicolumn{3}{c}{PDRA}\\
		\cmidrule{3-5}
		\multicolumn{2}{r}{\scriptsize{in sec.}} & $\lambda = 0.5$ & $\lambda = 0.9$ & $\lambda = 0.95$\\
		\midrule
$0.05$ & $56.85$ & $129.26$ & $65.21$  & $59.84$\\
 $0.1$ & $56.54$ & $59.21$  & $34.32$ & $36.67$\\
 $0.5$ & $65.17$ & $57.41$  & $42.06$  & $46.07$\\
 $1$   & $57.14$ & $93.75$  & $63.58$  & $58.66$\\		\bottomrule
	\end{tabular}
	\caption[]{Computation times for the CPPA and PDRA for different values of \(\eta\)
	and \(\lambda\) in seconds. 
	}
	\label{tab:times}
\end{table}

Next we compare the performance of the PDRA with the CPPA
for different values 
\(\eta\) of the proximal mapping and various parameters \(\lambda\) of the reflection, both applied to a subset of $50\times 50$ pixels of the Retina dataset.
The stopping criterion is 
\(r_{\text{max}} = 1500\) iterations or \(\epsilon = 10^{-6}\). 

Table~\ref{tab:times} records the computational time of both algorithms.
The CPPA requires always \(1500\) iterations which takes roughly a minute.
It appears to be robust with respect to the choice of \(\eta\). 
The PDRA on the other
hand depends on the chosen value of \(\eta\), e.g., for \(\lambda=0.5\) the computation time for a small \(\eta\) is \(129\) seconds, 
while its minimal computation time is just \(34\) seconds, for \(\lambda=0.9\) and \(\eta=0.1\). For the latter parameters, 
the iteration stops after \(278\) iterations with an \(\epsilon^{(278)} < 10^{-6}\). 
The runtime per iteration is longer  than for the CPPA due to the Karcher mean computation 
in each iteration which is implemented using the gradient descent method of~\cite{ATV13}.

\begin{table}
	\centering
	\begin{tabular}{lcccc}
		\toprule
		\multirow{2}{*}{$\eta$} & \multirow{2}{*}{CPPA} & \multicolumn{3}{c}{PDRA}\\
		\cmidrule{3-5}
		\multicolumn{2}{r}{\scriptsize{\(184.3643\)+...}} & $\lambda = 0.5$ & $\lambda = 0.9$ & $\lambda = 0.95$\\
		\midrule
$0.05$ & $44.80$ & $1.021\times10^{-5}$ & $1.180\times10^{-5}$  & $1.627\times10^{-5}$\\
 $0.1$ & $10.65$ & $2.514\times10^{-5}$ & $2.969\times10^{-5}$  & $3.429\times10^{-5}$\\
 $0.5$ & $1.055\times10^{-2}$ & $5.082\times10^{-4}$ & $2.785\times10^{-4}$  & $2.256\times10^{-4}$\\
 $1$ & $1.953\times10^{-2}$ & $8.189\times10^{-4}$ & $5.027\times10^{-4}$  & $4.992\times10^{-4}$\\ 	\bottomrule
	\end{tabular}
		\caption[]{Distance of the computed minimum to the assumed one of 184.3643.}
		\label{tab:l2tv}
\end{table}

While the computation time does not change much for the CPPA with different $\eta$, the resulting
values of the functional ${\mathcal E}$ in \eqref{task} do significantly. 
We compare the same values for \(\eta\) and \(\lambda\) as for the time measurements in Table~\ref{tab:l2tv}. 
To this end, we estimated the minimum of the functional as \(184.3643\) by performing the PDRA with \(3000\) iterations. 
We use this value to compare the results obtained for different parameters with this estimated minimum. 
Though the fastest PDRA $(\lambda = 0.9,\ \eta=0.1)$ does not yield the lowest value of ${\mathcal E}$, 
the slowest one $(\lambda = 0.5,\ \eta=0.05)$ does. 
Looking at the values for the CPPA, 
they depend heavily on the parameter \(\eta\) and are further away from the minimum than all PDRA tests.

\begin{figure}\centering
	\begin{subfigure}{.49\textwidth}
		\includegraphics{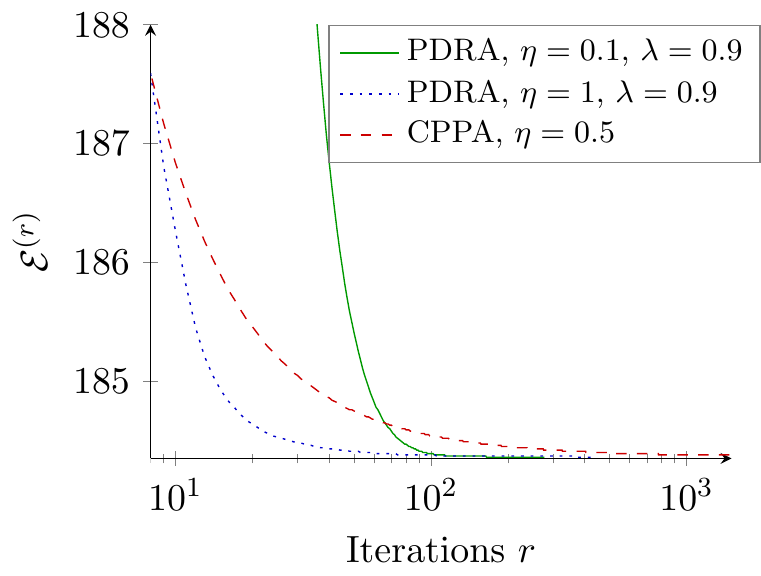}
			\caption[]{Functional value \({\mathcal E}^{(r)}\)  }
			\label{subfig:plotl2TV}
		\end{subfigure}%
	\begin{subfigure}{.49\textwidth}\centering
		\includegraphics{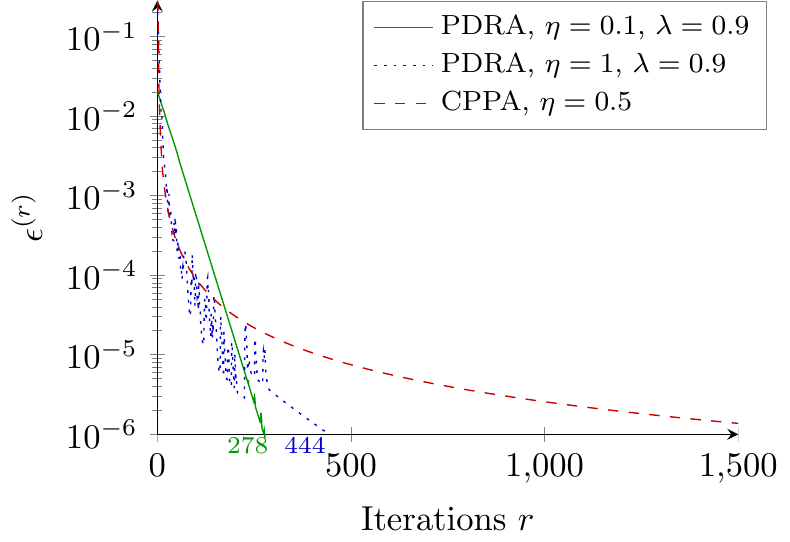}
	\caption[]{Error $\epsilon^{(r)} $}
	\label{subfig:ploteps}
	\end{subfigure}
	\caption[]{Comparison of \subref{subfig:plotl2TV} the functional value \({\mathcal E}^{(r)}\)
	and \subref{subfig:ploteps} the error \(\epsilon^{(r)}\)
	for the CPPA and the PDRA for different  parameters. 
	The  PDRA converges faster both with respect to the functional value 
	and the error.}
	\label{fig:Retina:Iterations}
\end{figure}

Fig.~\ref{subfig:plotl2TV} left shows the development of  \({\mathcal E}^{(r)}\coloneqq{\mathcal E} (u^{(r)}) \) with respect to the number of iterations. 
The CPPA converges quite slowly. Both plotted PDRA tests, the fastest (solid green) and the one starting with the steepest descent (dashed blue) from Table \ref{tab:l2tv}, 
decay faster. Due to the fast decay of $\mathcal{E}^{(r)}$ at the beginning, the iteration number is shown in a logarithmic scale.
Furthermore, also the errors \(\epsilon^{(r)}\coloneqq  \dist\bigl(x^{(r)},x^{(r-1)}\bigr)\), which are plotted in a logarithmic scale in Fig.~\ref{subfig:ploteps}, 
decay much faster for the PDRA than for the CPPA. Nevertheless, for $\eta = 1$, 
the decay is not monotone at the beginning of the algorithm.

In summary, the PDRA performs better than the CPPA with respect to runtime and minimal functional value. 
It is nearly independent of the choice of \(\eta\) when looking at the functional values in contrast to the CPPA. 
However, both \(\eta\) and \(\lambda\) have an influence on the runtime of the algorithm. 

\subsection{Scaled Structure Tensor} \label{subsec:numerics:SSPD}
The structure tensor of F\"orstner and G\"ulch~\cite{FG1987}, see also \cite{We97}, can be used to
determine edges and corners in images. For each pixel $x \in {\mathcal G}$ of an image $f\colon{\mathcal G} \rightarrow \mathbb R$ 
it is defined 
as the $2 \times 2$ matrix
\[
{\mathcal J}_\rho(x)\coloneqq \bigl(G_\rho* \nabla f_\sigma  \nabla f_\sigma ^\tT\bigr)(x) = \sum_{y \in {\mathcal G}} G_\rho(x-y) \nabla f_\sigma (y) \nabla f_\sigma (y)^\tT  .
\]
Here 
\begin{itemize}
 \item
$f_\sigma$ is the convolution of the initial image $f$ with a discretized Gaussian of zero mean and small standard deviation $\sigma$
truncated between $[-3\sigma,3\sigma]$ and mirrored at the boundary;
this slight smoothing of the image before taking the discrete derivatives avoids too noisy gradients,
\item
$\nabla$ denotes a discrete gradient operator, e.g., in this paper, forward differences in vertical and horizontal directions
with mirror boundary condition, and 
\item 
the convolution of the rank-1 matrices $\nabla f_\sigma (y) \nabla f_\sigma (y)^\tT$ with the discrete Gaussian $G_\rho$
of zero mean and standard deviation $\rho$ truncated between $[-3\rho,3\rho]$ is performed for every coefficient of the matrix;
the choice of the parameter $\rho$ usually results in
a tradeoff between
keeping too much noise in the image for small values and
blurring the edges for large values.
\end{itemize}
Clearly, ${\mathcal J}_\rho(x)$ is a symmetric, positive semidefinite matrix. For natural or  noisy images $f$ it is in general
positive definite, i.e. an element of ${\mathcal P}(2)$.
Otherwise one could also exclude matrices with zero eigenvalues by choosing~\(\mathcal V\subset\mathcal G\) appropriately in our model~\eqref{task}.

Let $\lambda_1 \ge \lambda_2 >0$ denote the eigenvalues of ${\mathcal J}_\rho(x)$, $x \in {\mathcal G}$,
with corresponding normed eigenvectors $v_1$ and $v_2 = v_1^\perp$. Then
$\lambda_1 \gg \lambda_2$, i.e., $\frac{\lambda_1}{\lambda_2} \gg 1$ indicates an edge at $x\in {\mathcal G}$.
If the quotient is near 1 we have a homogeneous neighborhood of $x$ or a vertex.
In this paper, we are interested in edges. 
To this end, we follow an approach from \cite{CF2009} and consider
\[
J_\rho(x)\coloneqq \frac{1}{\sqrt{ \det {\mathcal J}_\rho(x) }} {\mathcal J}_\rho(x).
\]
Clearly, this matrix has the same eigenvectors as ${\mathcal J}_\rho(x)$ and the eigenvalues $\mu_1 = \sqrt{\frac{\lambda_1}{\lambda_2}}$ and $\mu_2 = 1/\mu_1$.
Further, $\mu_1 \gg \mu_2$ indicates an edge.
The matrix belongs to the subspace ${\mathcal P}_1(2) \subset {\mathcal P}(2)$ of symmetric positive definite matrices
of determinant 1. 
The set ${\mathcal P}_1(2)$ together with the affine invariant metric of ${\mathcal P}(2)$ 
forms a Riemannian manifold which is isomorphic to the hyperbolic manifold
of constant curvature $-\frac12$, see Appendix \ref{app:hn}.

We consider images $\big( J_\rho(x) \big)_{x \in {\mathcal G}}$ with values in ${\mathcal P}_1(2)$.
The structure tensors are visualized as ellipses using
the scaled eigenvectors \(\mu_i v_i\), $i=1,2$ as major axes.
The colorization is done with the anisotropy index from~\cite{MoBa06}, i.e.,
tensors indicating edges are purple or blue, while red or green ellipses
indicate constant regions.

We take the artificial image of size \(N=M=64\) with values in $[0,1]$ shown in Fig.~\ref{fig:str:oi}
and add white Gaussian noise with standard deviation \(\sigma_{\text{n}} = 0.2\)
to the image.
The noisy image is depicted in Fig.~\ref{fig:str:ni}.
Fig. \ref{fig:str:sp} shows the structure tensor image $J_{\rho_1}$ for $\sigma = 0.8$ and small
$\rho_1= \frac{35}{100}$.
On the one hand, the structure tensors nicely show sharp edges,
but on the other hand 
they are affected by a huge
amount of noise.

Searching for a value of the parameter \(\rho_2\) on a grid of size \(\frac{1}{20}\) in order to
find a structure tensors with thinnest edges and constant regions results
in \(\rho_2=\frac{6}{5}\) and the structure tensors depicted in Fig.~\ref{fig:str:op}. 
The larger parameter $\rho$ increases the size of the neighborhood
in the smoothing of the matrices 
$\nabla f_\sigma  \nabla f_\sigma ^\tT$. 
This yields smoother structure tensors, but the edges become
broader (blurred). Some artifacts from the noise are still visible.

A remedy is shown in Fig.~\ref{fig:str:dr}, where 
we denoised the structure tensor image from Fig.~\ref{fig:str:sp}
using the PDRA applied to model~\eqref{task} with the parameters \(\alpha=1.05\), \(\eta=0.4\), and \(\lambda = 0.9\). 
The denoised structure tensor image shows thinner edges and less noise than the one in Fig.~\ref{fig:str:op}.

Note that the manifold-valued computations in the PDRA 
were done with respect to the hyperbolic manifold with pre-processing (post-processing) 
by the isomorphism (inverse isomorphism)  from ${\mathcal P}_1(2)$ to the hyperbolic manifold described in Appendix~\ref{app:hn}.

\begin{figure}[tbp]
	\centering
	\hfill
	\begin{subfigure}{0.27\textwidth}
		\centering
		\includegraphics[width = 0.98\textwidth]{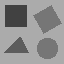}
		\caption[]{Original image}\label{fig:str:oi}
	\end{subfigure}
	\hfill
	\begin{subfigure}{0.27\textwidth}
		\centering
		\includegraphics[width = 0.98\textwidth]{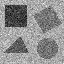}
		\caption[]{Noisy image, $\sigma_{\text{n}} = 0.2$}\label{fig:str:ni}
	\end{subfigure}
	\hfill~
	\\
	\begin{subfigure}{0.27\textwidth}
		\centering
		\ \\[-\baselineskip]
		\includegraphics[width=1.43\textwidth]{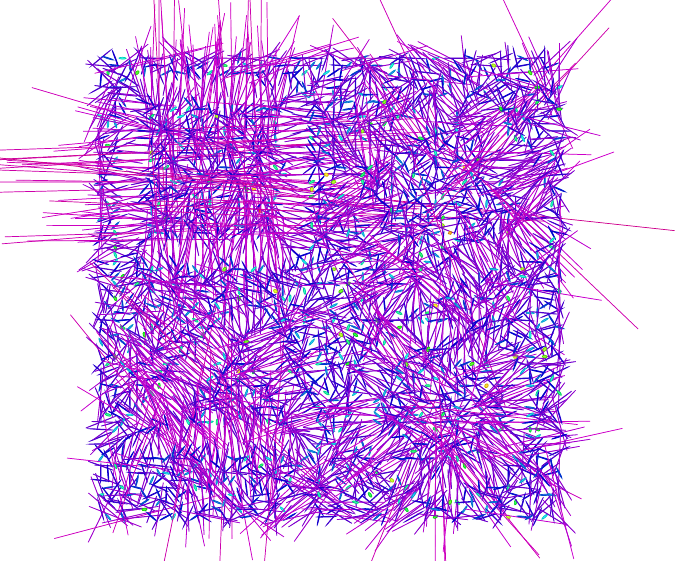}
		\\[-.5\baselineskip]
		\caption[]{Structure tensors of~\subref{fig:str:ni},
		$\sigma=\tfrac{4}{5},\rho_1=\tfrac{35}{100}$}
		\label{fig:str:sp}
	\end{subfigure}
	\hfill
	\begin{subfigure}{0.27\textwidth}
		\centering
		\includegraphics[width = \textwidth]{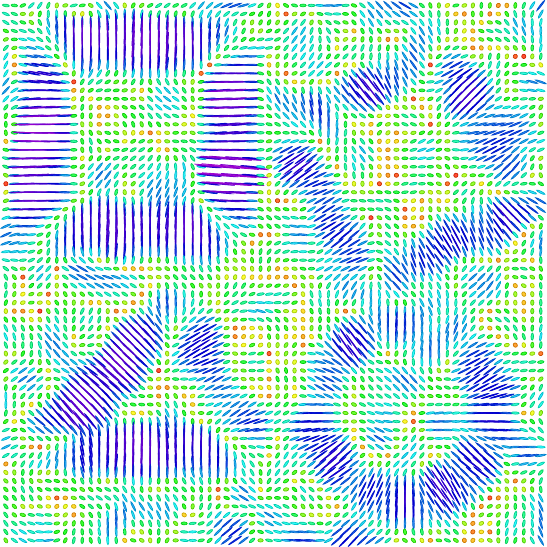}
		\caption[]{Structure tensors of~\subref{fig:str:ni},
		$\sigma=\frac{4}{5},\rho_2=\tfrac{6}{5}$.}\label{fig:str:op}
	\end{subfigure}
	\hspace{.5cm}
	\begin{subfigure}{0.27\textwidth}
		\centering
		\includegraphics[width = \textwidth]{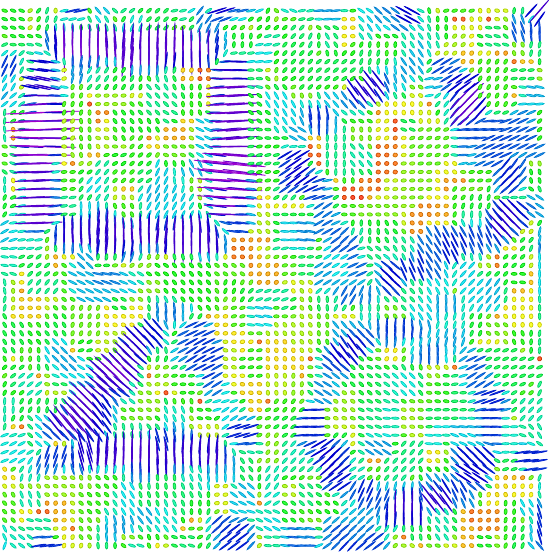}
		\caption[]{Restored structure tensors from~\subref{fig:str:sp}}\label{fig:str:dr}
	\end{subfigure}
	\caption[]{Application of model \eqref{task} and the PDRA to ${\mathcal P}_1(2)$-valued images.}
\end{figure}

\subsection{Denoising of Symmetric Positive Definite Matrices: DT-MRI} \label{subsec:numerics:SPD}
\begin{figure}\centering
	\begin{subfigure}{.495\textwidth}\centering
		\includegraphics[width=.9\textwidth]{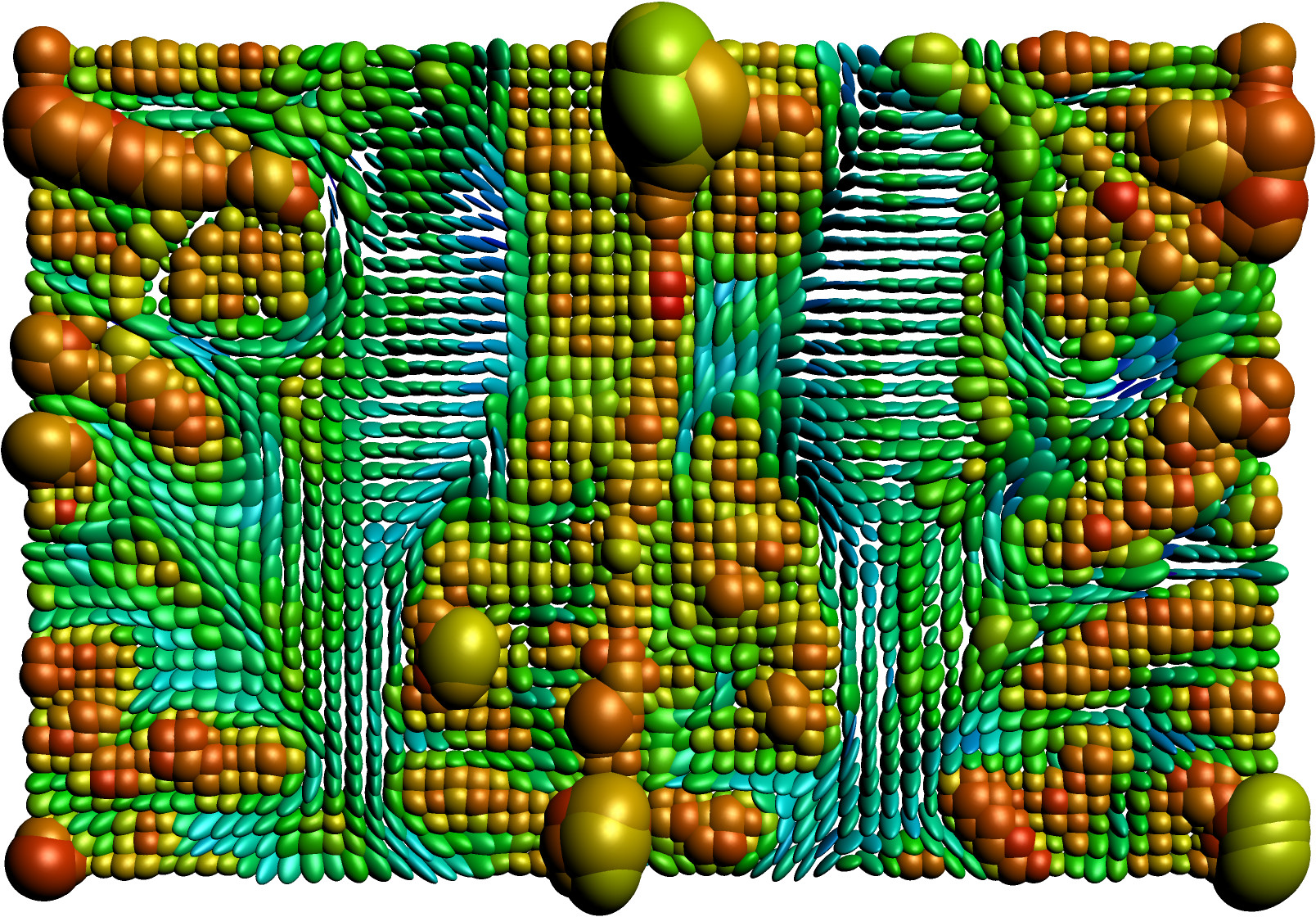}
		\caption[]{Original data from the Camino set}%
		\label{subfig:Camino:orig}
	\end{subfigure}
	\begin{subfigure}{.495\textwidth}\centering
		\includegraphics[width=.9\textwidth]{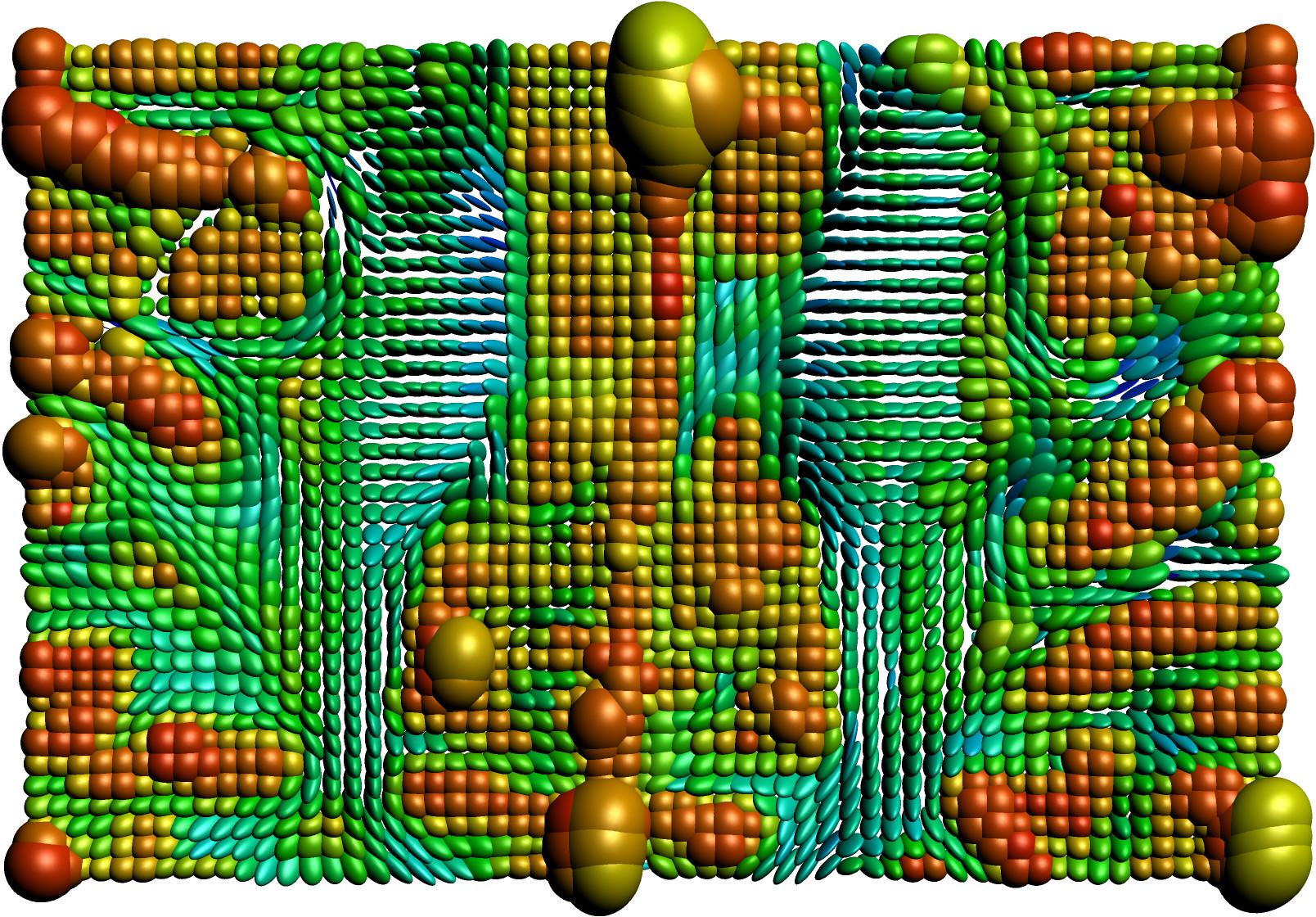}
		\caption[]{Reconstruction by model \eqref{task}, \(\alpha=0.05\)}%
		\label{subfig:Camino:reg}
	\end{subfigure}
	\caption[]{The Camino DT-MRI data of slice \(28\) and traversal plane subset
		\((i,j)\in\{28,...,87\}\times\{24,\ldots,73\}\) (left) and its denoised version by
		model \eqref{task} with the PDRA.
	}
	\label{fig:Camino}
\end{figure}

In magnetic resonance tomography, it is possible to capture the diffusivity
of the measured material and obtain diffusion tensor images (DT-MRI), where each
pixel is a \(3\times 3\) symmetric positive definite matrix. 
As already mentioned, the manifold \(\mathcal P(3)\) is not of constant curvature,
so that we cannot ensure the convergence of the PDRA in general.
Nevertheless, we can investigate the convergence numerically. 
We compare the  algorithm with the CPPA and HQMA.
For the HQMA we chose two different parameters \(\delta_1\coloneqq\tfrac{1}{10}\) and \(\delta_2\coloneqq\frac{1}{100}\) in the smoothing function $\varphi$.

We process the Camino
dataset\footnote{see~\href{http://cmic.cs.ucl.ac.uk/camino/}{http://cmic.cs.ucl.ac.uk/camino}}\cite{Camino}
which captures the diffusion inside a human head. 
To be precise, we take a subset from the complete
dataset \(f = \bigl(f_{i,j,k}\bigr)\in\mathcal P(3)^{112\times112\times50}\), 
namely from the traversal plane \(k=28\) the data points $\bigl(f_{i,j,28}\bigr)$ with \((i,j)\in\{28,...,87\}\times\{24,\ldots,73\}\).
In the original Camino data some of the pixel are
missing which we reset using a
nearest neighbor approximation depicted in Fig.~\ref{subfig:Camino:reg}. 
The denoising result by applying the PDRA to model~\eqref{task} with \(\alpha=0.05\)
is shown in Fig.~\ref{subfig:Camino:reg}.
We have used the parameters $\eta = 0.58$, $\lambda = 0.93$, and $\epsilon = 10^{-6}$ within the algorithm. The other three algorithms yield similar results. 

\begin{figure}\centering
	\begin{subfigure}{.49\textwidth}
		\includegraphics{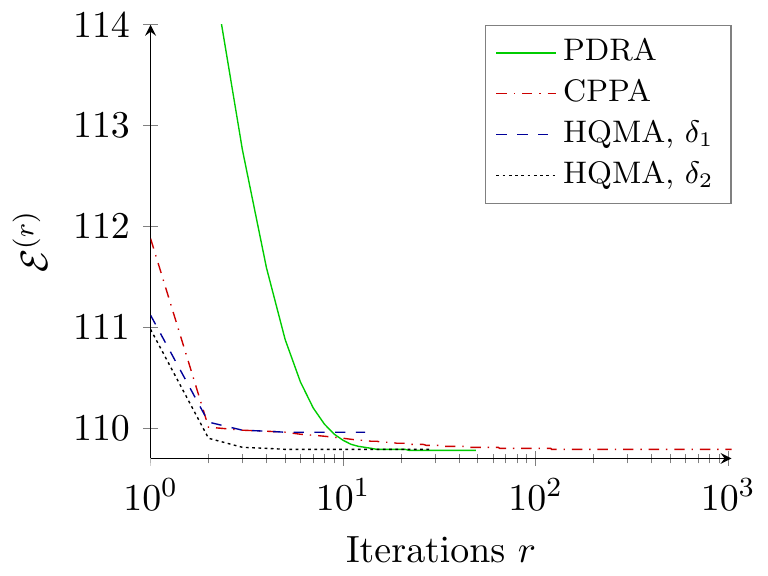}
		\caption[]{$\ell^2$-$\TV$ functional}
		\label{subfig:SPDplotl2TV}
	\end{subfigure}
	\begin{subfigure}{.49\textwidth}\centering
		\includegraphics{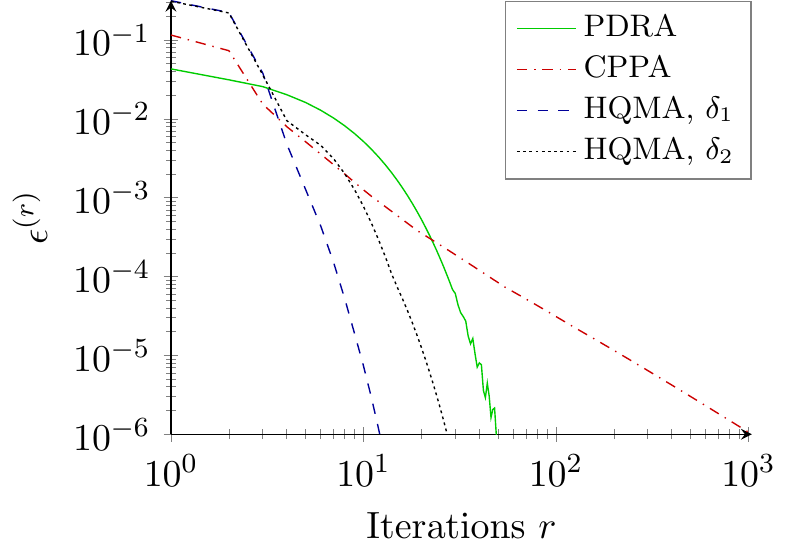}
		\caption[]{$\epsilon$ stopping criterion}
		\label{subfig:SPDploteps}
	\end{subfigure}
	\caption[]{For the development of both the \(\ell^2\)-\(\TV\) functional value in~\subref{subfig:SPDplotl2TV}
		and the stopping criterion \(\epsilon^{(r)} = \dist(x^{(r)},x^{(r-1)})\) in~\subref{subfig:SPDploteps}
		we compare the CPPA, DR and HQ minimization, with two choices of $\delta$. While the CPPA and the HQMA decrease the functional value a lot during the first iterations, 
		the PDRA yields the minimal value first and the HQ minimization can not reach the minimum due to the relaxation. 
		Regarding the $\epsilon^{(r)}$ the PDRA and HQMA behave similar, while the CPPA a slower decrease.}
	\label{fig:SPD:Iterations}
\end{figure}
Again we compare the development of  functional ${\mathcal E}^{(r)}$
and the error \(\epsilon^{(r)}\), where
we use \(\epsilon = 10^{-6}\) as stopping criterion. The results
are shown in Fig.~\ref{fig:SPD:Iterations}. 
Both values decay fast
in the first few iterations, so we plotted both with a logarithmic \(x\) axis. 
Comparing PDRA and CPPA, the results
are similar to the experiments on the Retina data.
Furthermore, the functional during the PDRA decreases below those of the HQMA, which is expected, because the HQMA is applied to the smoothed functional ${\mathcal E}_\delta$. 
The error \(\epsilon^{(r)}\) behaves quite similar in the PDRA and HQMA differing only by a factor.

The functional values, number of iterations and runtimes of the algorithms
are presented in Table~\ref{tab:SPD}.
While the HQMA  with \(\delta_1\) is the fastest both in number of iterations
and runtime, it is not able to reach the minimal functional value due to the smoothing. 
Even more, looking at a single iteration, the PDRA is faster than both HQMAs. 

Though there is no mathematical proof of converging to a minimizer, the PDRA
yields results, that still outperform the CPPA with respect to
iterations and runtime. It is compatible with the HQMA,
that minimizes a smoothed functional to gain performance.

\begin{table}\centering
	\begin{tabular}{lcccc}\toprule
		& PDRA & CPPA & HQMA, \(\delta_1\) & HQMA, \(\delta_2\)\\\midrule
		\(\ell^2\)-\(\TV\) value & $\mathbf{109.7844}$ & $109.7854$ & $109.9617$ & $109.7890$\\
		\# iterations & $49$ & $1042$ & $\mathbf{13}$ & $28$\\	
		runtime (sec.) & $139.2664$ & $643.2270$ & $\mathbf{113.0849}$ & $225.3906$\\\bottomrule
 	\end{tabular}
	\caption[]{Comparison of all four algorithms to minimize the \(\ell^2\)-\(\TV\) functional~\eqref{task} with respect to 
	the resulting minimal value, the number of iterations and the computational time.}
	\label{tab:SPD}
\end{table}

\subsection{Inpainting of Symmetric Positive Definite Matrices}\label{subsec:numerics:InpaintSPD}
\begin{figure}\centering
	\begin{subfigure}{.32\textwidth}
		\includegraphics[width=\textwidth]{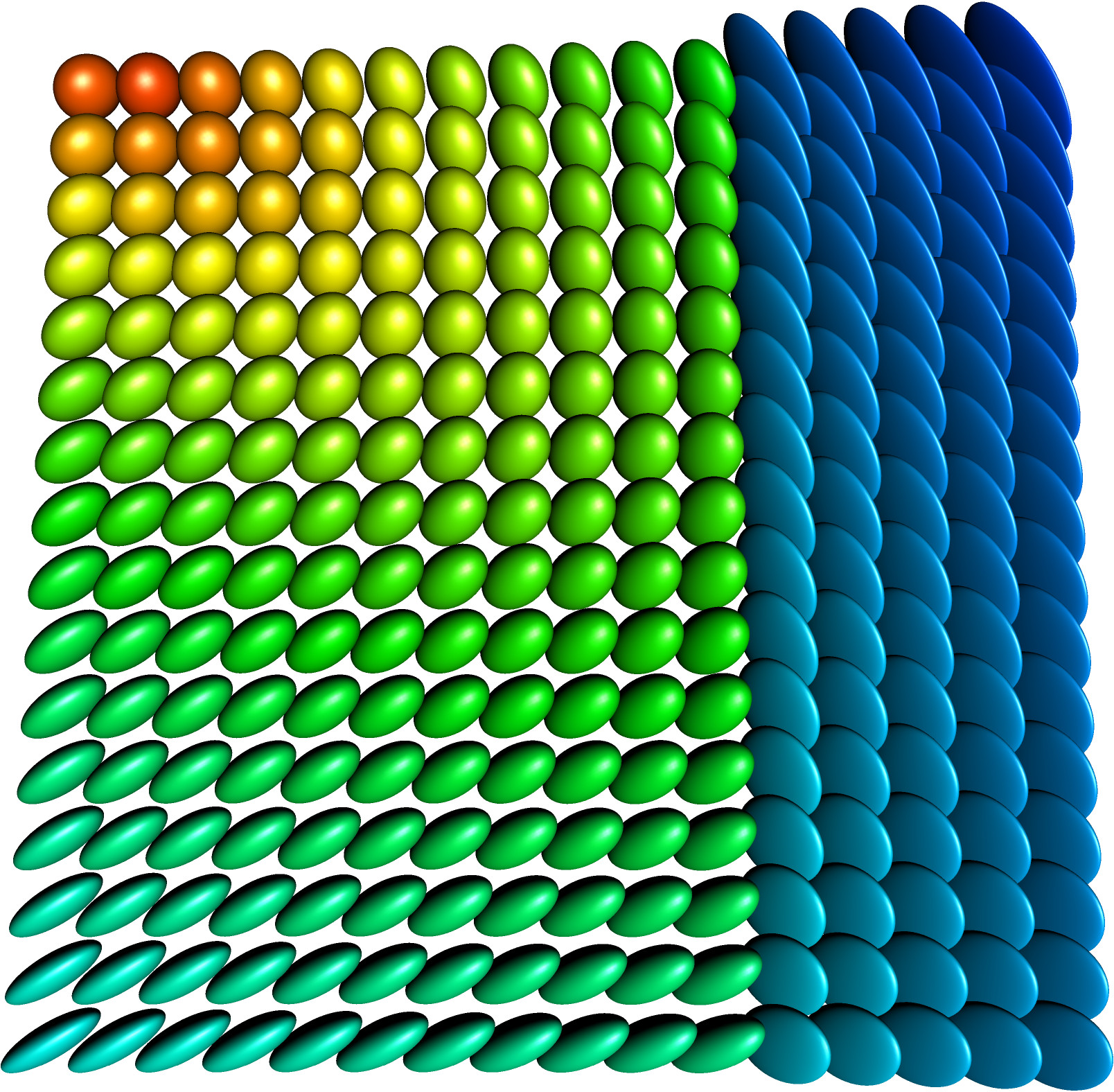}
		\caption[]{Original \(\SPD(3)\)-valued\\image}
		\label{fig:InpaintSPD:orig}
	\end{subfigure}
	\hfill
	\begin{subfigure}{.32\textwidth}
		\includegraphics[width=\textwidth]{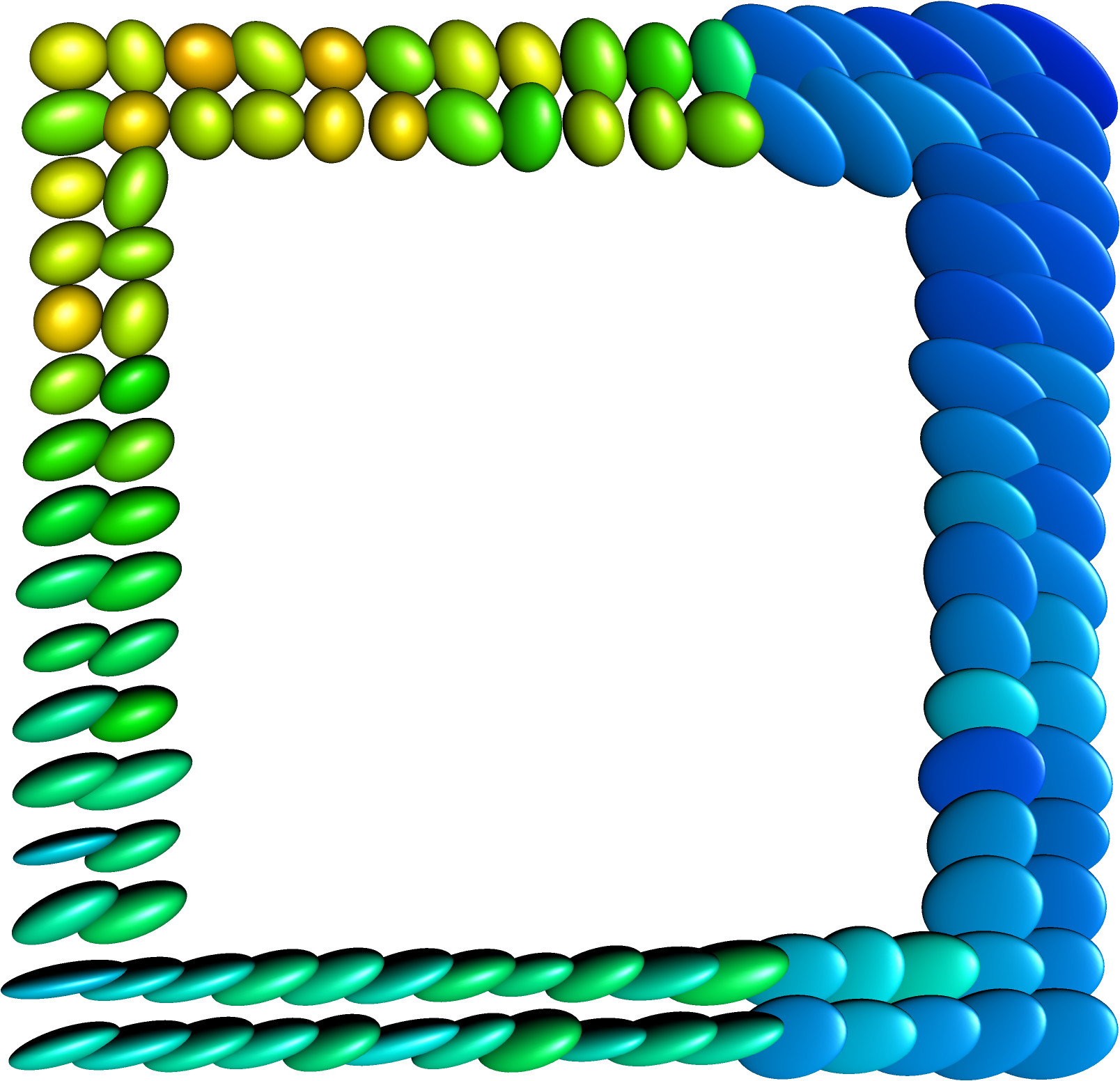}
		\caption[]{Noisy Data, \(\sigma_{\text{n}} = 0.01\),\\ on \(\mathcal V \subset \mathcal G\)}
		\label{fig:InpaintSPD:lossy}
	\end{subfigure}
	\hfill
	\begin{subfigure}{.32\textwidth}
		\includegraphics[width=\textwidth]{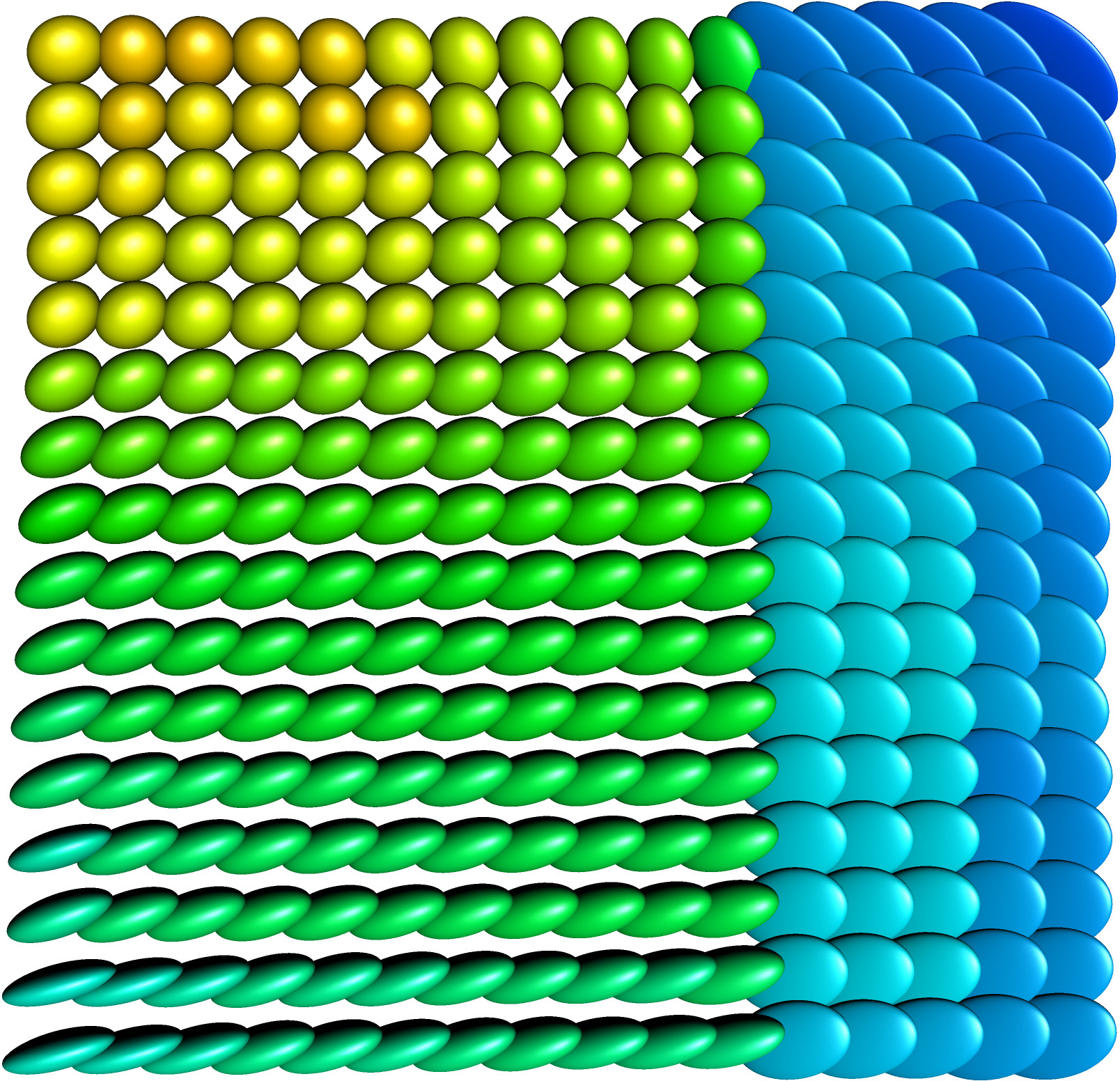}
		\caption[]{Inpainting \& denoising by\\model \eqref{task}, \(\alpha=0.01\)}
		\label{fig:InpaintSPD:DR}
	\end{subfigure}
	\caption[]{An artificial example of an \(\SPD(3)\)-valued image, where~\subref{fig:InpaintSPD:orig} an original image is~\subref{fig:InpaintSPD:lossy}
	obstructed by noise and loss of data. The reconstruction using the PDRA yields~\subref{fig:InpaintSPD:DR} a reconstruction that takes edges into account.}
	\label{fig:InpaintSPD}
\end{figure}

In many situations, the measured data is not only affected by noise, but 
several data items are lost. 
In the artificial experiment shown in
Fig.~\ref{fig:InpaintSPD}, the original image
\(f\) on $\mathcal G = \{1,\ldots,16\}^2$
is destroyed in the inner part, i.e.~only the indices
\(\mathcal V = \bigl\{(i,j) : \min\{i,16-i,j,16-j\}\leq 2\bigr\}\)
are kept, and the remaining data is disturbed by
Rician noise, \(\sigma_{\text{n}} = 0.01\), 
cf.~Fig.~\ref{fig:InpaintSPD:lossy}.
After initialization of the missing pixels by the nearest neighbor method, we
employ the PDRA to minimize \eqref{task} with \(\alpha = 0.1\), \(\eta = 3\) and \(\lambda = 0.95\). 
The algorithm stops after 117 iterations in 
\( 36.188 \) seconds with the stopping criterion 
\(\epsilon = 10^{-5}\). The result is shown in Fig. \ref{fig:InpaintSPD:DR}.
We obtain \(\mathcal E\bigl(u^{(117)}\bigr) = 5.7107\). 
The CPPA with \(\eta=12\)  stops after \(2210\) iterations
with the same stopping criterion as above in \(161\) seconds. 
The functional \(\mathcal E\) has a slightly higher value 5.7117.

In total, the PDRA requires far less iterations and a shorter computational time than the CPPA, even though one iteration of the PDRA takes four times as long as one iteration of the CPPA due to the computation of the Karcher mean.
Nevertheless, the functional value of the PDRA also beats the CPPA when using the same stopping criterion. So even if we do not have a proof of convergence, 
the PDRA even performs better than the CPPA in case of noisy and lossy data.
%
\section{Conclusions} \label{sec:conclusions}
%
We considered the restoration (denoising and inpainting) of images having values in symmetric Hadamard manifolds.
We proposed a model with an \(L_2^2\) data term, and anisotropic TV-like regularization term,
and examined the performance of a parallel Douglas Rachford algorithm for minimizing the corresponding functional.
Note that this carries over directly to an \(L_1\) data term.
Convergence can be proved for manifolds with constant non positive curvature.
Univariate Gaussian probability distributions or symmetric positive definite matrices with determinant 1
are typical examples of such manifolds.
Numerically, the algorithm works also well for other symmetric Hadamard manifolds as the 
symmetric positive definite matrices. 
However, having a look at the convergence proof, it is not
necessary that the reflection operators are nonexpansive for arbitrary values on the manifold, but instead
a fixed point is involved in the estimates.
Can the convergence  be proved under certain assumptions on the locality
of the data?
We are also interested in the question under which conditions the reflection operator a every proper convex lsc function on a
Hadamard manifold with constant curvature is nonexpansive.
We have seen that this is true for indicator functions of convex sets and various distance-like functions.
For the latter, we have proved that the reflection is nonexpansive on general symmetric Hadamard manifolds.
We are currently working on a software package for \textsc{Matlab} which includes
several algorithms to minimize ROF-like functionals involving first and second
order differences applied to manifold-valued images. The code will be made public
soon and the code is already available on request.

\paragraph{Acknowledgments}
We would like to thank M. Ba{\v{c}}{\'a}k for
bringing reference~\cite{LMWY2011} to our attention. Thanks to J.~Angulo for providing the data of the Retina experiment. 
We gratefully acknowledge funding by the DFG within the project STE 571/13-1 \& BE 5888/2-1.

\appendix

\section{Proof of Theorem~\ref{th:soln} }\label{app:theo}
\begin{proof}
	\begin{enumerate}[label={\arabic*.},leftmargin=0pt,itemindent=*]
\item Let $\hat x \in \HH^n$ be a a solution of~\eqref{eq:dr_split_had}. 
	Then we obtain by Theorem~\ref{th:subdiff_calculus} that
	\begin{align}
		0\in \partial\bigl(\varphi+\psi\bigr)(\hat x) = \partial{\varphi}(\hat x)+\partial{\psi}( \hat x).
	\end{align}
	This set inclusion can be split into two parts, namely there exists a point $y\in \mathcal{H}^n$ such that
	\begin{align}\label{eq:start}
		 \log_{\hat x} y \in \eta\partial \psi (\hat x) \; \text{ and } \; -\log_{\hat x} y\in\eta\partial \varphi(\hat x), 
	\end{align}
	i.e.,
	\begin{align} \label{eq:inclusions}
		0\in\eta\partial \psi(\hat x)-\log_{\hat x} y \; \text{ and } \; 0\in\eta\partial \varphi(\hat x) + \log_{\hat x} y.
	\end{align}
	By Theorem~\ref{th:subdiff_calculus} and since $\nabla d(y,\cdot)^2 (\hat x) = - 2 \log_{\hat x} y$, we have 
\begin{equation} \label{eq:prox_alter}
	\tilde x
	= \prox_{\eta \psi} (y)
	= \argmin_{z \in \HH^n}\bigl\{ \tfrac{1}{2} d(y,z)^2 + \eta \psi(z)\bigr\}
	\quad \Leftrightarrow \quad
	 0 \in -\log_{\tilde x} y  + \eta \partial \psi( \tilde x ).
\end{equation}
	Hence the first inclusion is equivalent to
	\begin{align}\label{eq:expression_g}
		\hat x  = \prox_{\eta \psi}(y).
	\end{align}
	This implies
	\begin{equation} \label{eq:log_exp}
		\exp_{\hat x}(-\log_{\hat x}y) = R_{\hat x} y = \mathcal{R}_{\eta \psi}(y).
	\end{equation}
	From the second inclusion in~\eqref{eq:inclusions} we obtain
	\begin{align}\label{eq:inc_to_ref}
		0\in \eta\partial \varphi({\hat x}) - \log_{\hat x}\bigl(\exp_{\hat x}(-\log_{\hat x}y)\bigr) = \eta\partial \varphi({\hat x}) - \log_{\hat x}\bigl( \mathcal{R}_{\eta \psi}(y) \bigr) .
	\end{align}
	Using again~\eqref{eq:prox_alter} this is equivalent to
	\begin{align} \label{eq:expression_f}
		{\hat x} = \prox_{\eta \varphi} \bigl(\mathcal{R}_{\eta{\psi}}(y)\bigr).
	\end{align}
	Now~\eqref{eq:log_exp} can be rewritten as
	\[
		y = \exp_{\hat x}\Bigl(-\log_{\hat x} \bigl( \mathcal{R}_{\eta \psi}(y) \bigr)\Bigr)
	\]
	and plugging in~\eqref{eq:expression_f} we get
	\begin{align}
	y &= \exp_{\prox_{\eta \varphi} {\displaystyle (}\mathcal{R}_{\eta{\psi}}(y){\displaystyle )}}
	\bigl(-\log_{\prox_{\eta \varphi} {\displaystyle (}\mathcal{R}_{\eta{\psi}}(y){\displaystyle )}}\mathcal{R}_{\eta \psi}(y)\bigr)\\
		   &=\mathcal{R}_{\eta \varphi}\mathcal{R}_{\eta \psi}(y).
		\end{align}
		Hence $y$ is a fixed point of $\mathcal{R}_{\eta \varphi}\mathcal{R}_{\eta \psi}$ which is related to ${\hat x}$ by~\eqref{eq:expression_g}.
\item Conversely, let $y$ be a fixed point of $\mathcal{R}_{\eta \varphi}\mathcal{R}_{\eta \psi}$. 
Expanding the reflection on $\eta\varphi$, we obtain
\begin{equation}\label{eq:expansion_r}
y = \exp_{\prox_{\eta \varphi} {\displaystyle (}\mathcal{R}_{\eta{\psi}}(y){\displaystyle )}}\bigl(-\log_{\prox_{\eta \varphi} 
{\displaystyle (}\mathcal{R}_{\eta{\psi}}(y){\displaystyle )}}\mathcal{R}_{\eta \psi}(y)\bigr).
\end{equation}
We set ${\hat x}\coloneqq \prox_{\eta\varphi}\bigl(\mathcal{R}_{\eta\psi}(y)\bigr)$.
Rewriting \eqref{eq:expansion_r} yields 
\begin{equation}
\exp_{\hat x}(-\log_{\hat x}y) = \exp_{\prox_{\eta\psi}(y)}(-\log_{\prox_{\eta\psi}(y)}y).
\end{equation}
From the uniqueness of geodesics we get 
\begin{equation}
\prox_{\eta\varphi}\bigl(\mathcal{R}_{\eta\psi}(y)\bigr)= \hat x = \prox_{\eta\psi}(y).
\end{equation}
By \eqref{eq:prox_alter} we conclude
\begin{equation}\label{eq:inc_psi}
0\in\eta\partial\psi({\hat x})-\log_{\hat x}y,
\end{equation}
and similar to \eqref{eq:inc_to_ref} we have
\begin{equation}\label{eq:inc_phi}
0\in\eta\partial\varphi({\hat x})+\log_{\hat x}y.
\end{equation}
Adding these inclusions we obtain 
\begin{equation}
0\in \partial{\varphi}({\hat x})+\partial{\psi}( {\hat x})\subseteq \partial\bigl(\varphi+\psi\bigr)({\hat x}),
\end{equation}
i.e., ${\hat x}$ is a solution of \eqref{eq:dr_split_had}.\qedhere
\end{enumerate}
\end{proof}

\section{Symmetric Positive Definite Matrices \texorpdfstring{\(\mathcal P(n)\)}{P(n)}}\label{app:spd}
The manifold $\bigl(\SPD(n),\langle\cdot,\cdot\rangle_{\SPD(n)}\bigr)$ of  symmetric positive definite  $n \times n$ matrices $\SPD(n)$ is given by 
\begin{equation}
\SPD(n)\coloneqq \bigl\{x\in\RR^{n\times n} : x = x^\text{T} \text{ and } \vect{a}^{\text{T}}x\vect{a} > 0 \; \text{ for all } \vect{a}\in\RR^n\bigr\}
\end{equation}
with the affine invariant metric
\begin{equation}
 \langle u, v\rangle_{\SPD(n)} \coloneqq \operatorname{Trace}\bigl(x^{-1}ux^{-1}v\bigr),\quad u,v\in T_x\SPD(n).
\end{equation}
We denote by $\Exp$ and $\Log$ the matrix exponential and logarithm defined by
\(
\Exp x \coloneqq \sum_{k=0}^\infty \frac{1}{k!} x^k
\)
and
\( 
\Log x \coloneqq \sum_{k=1}^\infty \frac{1}{k} (I-x)^k, \ \rho(I-x) < 1.
\)  
Note that another metric for $\SPD(n)$, the so-called Log-Euclidean metric, was proposed in \cite{AFPA2007,pennec2006riemannian}
which is not considered in this paper.

Further, we use the following functions, see, e.g.,~\cite{SH14}:
\begin{description}
	\item[Geodesic Distance.] The distance between two points  \(x,y\in\SPD(n)\) is defined as
\[d_{\SPD(n)} (x,y) \coloneqq \norm[\big]{\Log(x^{-\frac12}yx^{-\frac12} )}.
\]
	\item[Exponential Map.] The exponential map \(\exp_{x}\colon T_{x}\SPD(n)\to\SPD(n)\) at a point \(x\in\SPD(n)\) is defined as
	\[
\exp_{x} ( v) \coloneqq x^{\frac12} \Exp(x^{-\frac12} v x^{-\frac12}) x^{\frac12},\qquad  v\in T_{x}\SPD(n).
\]
	\item[Logarithmic Map.] The logarithmic map \(\log_{x}\colon\SPD(n)\to T_{x}\SPD(n)\) at a point \(x\in\SPD(n)\) is given by 
	\[
		\log_{x} (y) \coloneqq x^{\frac12} \Log(x^{-\frac12} y x^{-\frac12}) x^{\frac12},\qquad y\in\SPD(n).
\]
	
	\item[Unit Speed Geodesic.] The geodesic connecting \(x\) and \(y\) is given by
\[
\gamma_{\overset{\frown}{x,y} }(t)
	\coloneqq x^{\frac12} \Exp \bigl(t \Log (x^{-\frac12} y x^{-\frac12} ) \bigr)x^{\frac12},
	\quad t\in[0,1],
\]
with $\gamma_{\overset{\frown}{x,y} }(0)=x$ and $\gamma_{\overset{\frown}{x,y} }(1) = y$.
\end{description}
%
\section{Hyperbolic Space}\label{app:hn}
In this section, we recall equivalent models of the hyperbolic space $\Hn_{\mathrm{M}}$ 
which were used in our implementations.
We start with the hyperbolic manifold 
which is the basis of our computations 
and introduce the relevant manifold functions.
Then we consider other equivalent models,
namely the Poincar\'e ball, the Poincar\'e upper half-plane,
the manifold of univariate Gaussian probability measures, and the space of
symmetric positive definite matrices
with determinant 1. 
In particular, we determine the isometries from these spaces to the  hyperbolic manifold.

The \emph{hyperbolic manifold} \(\Hn^d_{\mathrm{M}}\) of dimension \(d\) can be embedded into the \(\mathbb R^{d+1}\) 
using the Minkowski inner product \(\langle x,y\rangle_{\mathrm{M}} \coloneqq -x_{d+1}y_{d+1} + \sum_{i=1}^d x_iy_i \). Then
\[
		\Hn_{\mathrm{M}}^d\coloneqq
		\Bigl\{
			x\in\mathbb R^{d+1}
			:
			 \langle x,x\rangle_{\mathrm{M}} = -x_{d+1}^2 + \sum_{i=1}^{d} x_i^2 = -1,\ x_{d+1}>0
		\Bigr\}
		\subseteq\mathbb R^{d+1},
	\]
together with the metric $g_\mathrm{M}\coloneqq \langle \cdot,\cdot\rangle_{\mathrm{M}}$ is a Riemannian manifold. It has curvature $-1$, see \cite{Lee97}.
In this paper we are interested in $d=2$.
By \(\sinh\) and \(\cosh\) we denote the sine and cosine hyperbolicus and their inverses by \(\arsinh\) and \(\arcosh\), respectively.
The following functions were used in our computations:
\begin{description}
	\item[Geodesic Distance.] The distance between two points \(x,y\in\Hn_{\mathrm{M}}^d\) is defined as
\[d_{\Hn_{\mathrm{M}}^d} (x,y) \coloneqq \arcosh(-\langle x,y\rangle_{\mathrm{M}}).
\]
	\item[Exponential Map.] The exponential map \(\exp_{x}\colon T_{x}\Hn^d_{\mathrm{M}}\to\Hn^d_{\mathrm{M}}\) at a point \(x\in\Hn^d_{\mathrm{M}}\) is 
	\[
\exp_{x} (v) \coloneqq \cosh(\sqrt{\langle v, v\rangle_{\mathrm{M}}})x + \sinh(\sqrt{\langle v, v\rangle_{\mathrm{M}}})\frac{ v}{\sqrt{\langle v, v\rangle_{\mathrm{M}}}},
\qquad v\in T_{x}\Hn^d_{\mathrm{M}}.
\]
	\item[Logarithmic Map.] The logarithmic map \(\log_{x}\colon\Hn^d_{\mathrm{M}}\to T_{x}\Hn^d_{\mathrm{M}}\) at a point \(x\in\Hn^d_{\mathrm{M}}\) is given by
	\[
		\log_{x} (y) \coloneqq
		\frac{
			\arcosh(-\langle x,y\rangle_{\mathrm{M}})
		}{
			\sqrt{\langle x,y  \rangle_{\mathrm{M}}^2-1}
		}\bigl(
			y + \langle x,y \rangle_{\mathrm{M}}x
		\bigr)
\]
	
	\item[Geodesic.] The geodesic connecting \(x\) and \(y\) reads
\begin{align*}
\gamma_{\overset{\frown}{x,y} }(t)
	&\coloneqq
	x
	\cosh\bigl(t\arcosh(-\langle x,y\rangle_{\mathrm{M}})\bigr)\\
	&\quad+ \frac{y+\langle x,y\rangle_{\mathrm{M}}}%
	{\sqrt{\langle x,y\rangle_{\mathrm{M}}^2-1}}
	\sinh\bigl(t\arcosh(-\langle x,y\rangle_{\mathrm{M}})\bigr),
	\qquad t\in[0,1],
\end{align*}
with $\gamma_{\overset{\frown}{x,y} }(0)=x$ and $\gamma_{\overset{\frown}{x,y} }(1) = y$.
\end{description}

Next we consider the other models together with the relevant bijections.

\paragraph{Poincar{\'e} ball $\big(\Hn_{\mathrm{B}}^{d},g_{\mathrm{B}} \big)$.}  Let $B^d\subset\RR^d$ be the unit ball with respect to the Euclidean distance.
Together with the metric 
		\begin{equation}
		g_{\mathrm{B}}(u,v) \coloneqq 4\frac{\langle u,v\rangle}{(1-\lVert x\rVert^2)^2} ,\quad u,v\in T_xB^d,
		\end{equation}
		it becomes a Riemannian manifold $\Hn_{\mathrm{B}}^{d}\subset\RR^d$ called the \emph{Poincar{\'e} unit ball}.
		It is equivalent to the hyperpolic manifold $\Hn_{\mathrm{M}}^{d}$ with isometry
		$\pi_1 \colon\Hn_{\mathrm{M}}^d\to \Hn_{\mathrm{B}}^{d}$ defined by
		\begin{equation}
		\pi_1(x)\coloneqq \frac{1}{1+x_{d+1}} \tilde x, 
		\quad 
		\pi_1^{-1}(y) = \frac{1}{1-\lVert y\rVert^2}\begin{pmatrix} 2y\\ 1+\lVert y\rVert^2 \end{pmatrix},
	\end{equation}
	where $x = (x_1, \ldots,x_d,x_{d+1})^\tT = (\tilde x^\tT,x_{d+1})^\tT$, see \cite[Proposition 3.5]{Lee97}.
\paragraph{ Poincar{\'e} half-space $\big(\Hn_{\mathrm{P}}^{d},g_{\mathrm{P}} \big)$.}
The upper half-space $\Hn_{\mathrm{P}}^{d}\coloneqq\{x\in\RR^d: x_d>0\}$. 
with the metric 
		\begin{equation}
		g_{\mathrm{P}} (u,v) \coloneqq \frac{\langle u,v\rangle}{x_d^2},\quad u,v\in T_{x}\Hn_{\mathrm{P}}^{d},
		\end{equation}
		is a Riemannian manifold known as \emph{Poincar{\'e} half-space}.
		It is equivalent to $\Hn_{\mathrm{B}}^{d}$ and thus to $\Hn_{\mathrm{M}}^{d}$ with isometry
		$\pi_2 \colon \Hn_{\mathrm{B}}^{d} \to\Hn_{\mathrm{P}}^d$ given by
	\begin{equation}
	\pi_2(x)= \frac{1}{\lVert \tilde x\rVert^2+(x_d-1)^2}\begin{pmatrix} 	2\tilde x\\	1-\lVert x\rVert^2 - x_d^2\end{pmatrix}, 
	\quad 
	\pi_2^{-1}(y) = \frac{1}{\lVert\tilde y\rVert^2 + (y_d+1)^2}
	\begin{pmatrix}
	2 \tilde y \\
	\lVert\tilde y\rVert^2 + y_d^2 - 1
	\end{pmatrix}\!,
	\end{equation}
	where $x = (x_1, \ldots,x_{d-1},x_{d})^\tT = (\tilde x^\tT,x_{d})^\tT$, see \cite[Proposition 3.5]{Lee97}. 
\paragraph{Univariate Gaussian probability measures $\big({\mathcal N}, g_{\mathrm{F}}\big)$.}
A distance measure for probability distributions with density function 
$\varphi(x,\theta)$ 
depending on the parameters $\theta=(\theta_1,\ldots,\theta_n)$  is given by the Fischer information matrix \cite[Chapter 11]{CT2006}
\begin{equation}
F (\theta)   = 
\Big( \int_{-\infty}^\infty  \varphi(x,\theta) \frac{\partial \ln \varphi(x,\theta)}{\partial \theta_i}\frac{\partial \ln \varphi(x,\theta)}{\partial \theta_j} \, dx \Big)_{i,j=1}^n.
\end{equation}
In the case of univariate Gaussian densities it reduces to
\begin{equation}
F(\mu,\sigma) =\begin{pmatrix}
\frac{1}{\sigma^2} & 0\\0 &\frac{2}{\sigma^2},
\end{pmatrix}.
\end{equation}
Then the Fischer metric is given by 
\begin{equation}
g_{\text{F}}(u,v) \coloneqq \frac{u_1v_1 + 2u_2v_2}{\sigma^2},\quad u,v\in T_{(\mu,\sigma)}\mathcal{N},
\end{equation}
i.e. $ds^2 = u^T F u = \frac{1}{\sigma^2} (u_1^2 + 2u_2^2)$.
Then, the isomorphism $\pi_3\colon {\mathcal N} \rightarrow \Hn_{\mathrm{P}}^2$ with
\[
	\pi_3 (\mu,\sigma) =  \bigl(\tfrac{\mu}{\sqrt{2}},\sigma\bigr), \qquad \pi_3^{-1} (x_1,x_2) = (\sqrt{2} x_1, x_2)
\]
is an isometry between $( {\mathcal N},g_\mathrm{F})$ and $( \Hn_{\mathrm{P}}^2, 2 g_\mathrm{P} )$, see, e.g.,~\cite{CSS2014}.
Thus, $( {\mathcal N},g_\mathrm{F})$ has curvature $-\frac12$.
We mention that $\pi_1^{-1}\circ \pi_2^{-1} \circ \pi_3$ is an isomorphism from ${\mathcal N}$ to our model hyperbolic manifold $\Hn_{\mathrm{M}}^2$.
\paragraph{Symmetric positive definite $2 \times 2$ matrices with determinant 1 $\big( {\mathcal P}_1(2), \langle\cdot,\cdot\rangle_{\SPD_1(2)} \big)$.} 
Following~\cite{CF2009}, an 
	isomorphism $\pi_3\colon\SPD_1(2)\rightarrow \Hn_{\mathrm{M}}^2$ is given by
	\begin{equation}
\pi_4 (a) \coloneqq 
\begin{pmatrix}
\frac{a_{11} - a_{22}}{2}\\
{\textstyle a_{12}}\\ 
\frac{a_{11} + a_{22}}{2}
\end{pmatrix},
\qquad
\pi_4^{-1}(x) = 
\begin{pmatrix}
	x_1+x_3 & x_2\\ 
	x_2 & x_3-x_1
\end{pmatrix},
\end{equation}
where	
\begin{equation}
a\coloneqq \begin{pmatrix}
a_{11} & a_{12}\\a_{12} & a_{22} \end{pmatrix} \quad \text{with} \quad \det a =  a_{11}a_{11} - a_{12}^2 = 1,
\end{equation}
and 
$x \coloneqq (x_1,x_2,x_3)^\tT$ with $x_3 >0$ and $x_1^2 + x_2^2 - x_3^2 = -1$.

The operator $\pi_3$ is an isometry between $\bigl(\SPD_1(2),\langle\cdot,\cdot\rangle_{\SPD_1(2)}\bigr)$
and $\bigl(\Hn_{\mathrm{M}}^2,2\langle\cdot,\cdot\rangle\bigr)$.
To verify this relation, we define the push-forward operator \(\phi_*\colon T_x\mathcal M\rightarrow T_{\phi(x)}\mathcal N\)
of a smooth mapping $\phi\colon\mathcal M\rightarrow \mathcal N$ between two manifolds $\mathcal M, \mathcal N$ as 
\begin{align}
\phi_* u (f) \coloneqq u(f\circ\phi),
\end{align}
where $f\in\C^{\infty}(\mathcal{N},\RR),\ u \in T_x\mathcal M$.
The push-forward $\phi_*$ of $\phi$ is also known as the differential
of $\phi$, for details see~\cite{Lee2000}.
\begin{lemma}
	Let $\bigl(\Hn_{\mathrm{M}}^2,2\langle \cdot,\cdot\rangle_{\mathrm{M}}\bigr)$, the model space ${\mathcal M}^2_{-\frac{1}{2}}$ embedded into the $\RR^3$, and\\
	$\bigl(\SPD_1(2),\langle\cdot,\cdot\rangle_\SPD\bigr)$, the symmetric positive definite $2\times 2$ matrices having determinant 1 with the affine invariant metric, be given.
	Then 
	\begin{align}
	\psi\colon\Hn_{\mathrm{M}}^2\rightarrow\SPD_1(2),
	\psi(x) = \pi_4^{-1}(x) = \begin{pmatrix}
	x_1+x_3 & x_2\\x_2 & x_3-x_1
	\end{pmatrix},
	\end{align}
	is an isometry.
	\begin{proof}
		We have to show that
		\begin{equation}
		2\langle u,u \rangle_{\mathrm{M}} = \bigl\langle\psi_*(u),\psi_*(u)\bigr\rangle_{\psi(x)},
		\end{equation}
		for all $u\in T_x\Hn_{\mathrm{M}}^2$, see $\cite{Lee97}$. The push-forward $\psi_*$ is given by
		\begin{equation}
		\psi_*(u) = \begin{pmatrix}
		u_1+u_3 & u_2\\u_2 & u_3-u_1
		\end{pmatrix}.
		\end{equation}
		Given $x\in\Hn_{\mathrm{M}}^2$ and $u\in T_x\Hn_{\mathrm{M}}^2$ we know 
		\begin{align}
		\langle x,x\rangle_{\mathrm{M}}&= x_1^2+x_2^2-x_3^2 = -1,\label{eq:hyper}\\
		\langle x,u\rangle_{\mathrm{M}}&= x_1u_1+x_2u_2-x_3u_3 =0.\label{eq:tangent}				
		\end{align}
		Further we have 
		\begin{equation}
		\bigl(\psi(x)\bigr)^{-1} = \begin{pmatrix}
		x_3-x_1 & -x_2\\-x_2 & x_1+x_3
		\end{pmatrix}.
		\end{equation}
		We start with
		\begin{align}
		\bigl\langle&\psi_*(u),\psi_*(u)\bigr\rangle_{\psi(x)} =\operatorname{trace}\Bigl(\bigl(\psi(x)\bigr)^{-1}\psi_*(u)\bigl(\psi(x)\bigr)^{-1}\psi_*(u)\Bigr) \\
		&=\operatorname{trace}\left(\Biggl(\begin{pmatrix}
		x_3-x_1 & -x_2\\-x_2 & x_1+x_3
		\end{pmatrix}\begin{pmatrix}
		u_1+u_3 & u_2\\u_2 & u_3-u_1
		\end{pmatrix}\Biggr)^2	\right)\\
		&=\operatorname{trace}\left(\begin{pmatrix}
		x_3u_1+x_3u_3-x_1u_1-x_1u_3-x_2u_2 & x_3u_2-x_1u_2-x_2u_3+x_2u_1\\
		x_3u_2+x_1u_2-x_2u_3-x_2u_1 & -x_2u_2+x_1u_3-x_1u_1+x_3u_3-x_3u_1
		\end{pmatrix}^2	\right)\\	
		&= 2\bigl( x_1^2u_1^2 - x_1^2u_2^2 + x_1^2u_3^2 - x_2^2u_1^2 + x_2^2u_2^2 + x_2^2u_3^2  + x_3^2u_1^2 + x_3^2u_2^2 + x_3^2u_3^2 \\&\qquad+ 4x_1x_2u_1u_2 - 4x_1x_3u_1u_3   - 4x_2x_3u_2u_3 \bigr)\\
		&= 2\bigl(u_1^2(x_1^2-x_2^2+x_3^2)+ u_2^2(-x_1^2+x_2^2+x_3^2)+ u_3^2(x_1^2+x_2^2+x_3^2)\\
		&\qquad+ 4x_1x_2u_1u_2 - 4x_1x_3u_1u_3   - 4x_2x_3u_2u_3\bigr).
		\end{align}
		Using \eqref{eq:hyper} we obtain
		\begin{align}		
		\bigl\langle\psi_*(u),\psi_*(u)\bigr\rangle_{\psi(x)} &= 2\bigl(u_1^2(2x_1^2+1)+ u_2^2(2x_2^2+1)+ u_3^2(2x_3^2-1)\\
		&\qquad+ 4x_1x_2u_1u_2 - 4x_1x_3u_1u_3   - 4x_2x_3u_2u_3\bigr)\\
		&= 2(u_1^2+u_2^2-u_3^2)\\
		&\qquad+2(u_1^2x_1^2+u_2^2x_2^2+u_3^2x_3^2+ 4x_1x_2u_1u_2 - 4x_1x_3u_1u_3   - 4x_2x_3u_2u_3)\\
		&= 2\langle u,u\rangle_{\mathrm{M}}+ 2(u_1x_1+u_2x_2-u_3x_3)^2.
		\end{align}
		With \eqref{eq:tangent} we obtain the assertion.
	\end{proof}
\end{lemma}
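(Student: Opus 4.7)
The plan is to verify the isometry condition pointwise: for every $x \in \Hn_{\mathrm{M}}^2$ and every $u \in T_x \Hn_{\mathrm{M}}^2$, one must show that
\[
2\langle u,u\rangle_{\mathrm{M}} = \bigl\langle \psi_*(u),\psi_*(u)\bigr\rangle_{\psi(x)}.
\]
Since $\psi$ is the restriction of a linear map $\RR^3 \to \Sym(2,\RR)$, its differential at any point is that same linear map, so $\psi_*(u)$ has the same matrix form as $\psi(u)$, only interpreted as an element of $T_{\psi(x)}\SPD_1(2)\subset \Sym(2,\RR)$. The computations therefore reduce to unpacking the affine invariant trace form.

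First, I would compute $\psi(x)^{-1}$. The determinant of $\psi(x)$ is $(x_1+x_3)(x_3-x_1)-x_2^2 = x_3^2-x_1^2-x_2^2$, which equals $1$ by the hyperbolic constraint $\langle x,x\rangle_{\mathrm{M}} = -1$. Hence
\[
\psi(x)^{-1} = \begin{pmatrix} x_3-x_1 & -x_2 \\ -x_2 & x_1+x_3 \end{pmatrix}.
\]
Next I would expand the inner product
\[
\bigl\langle \psi_*(u),\psi_*(u)\bigr\rangle_{\psi(x)} = \tr\bigl(\psi(x)^{-1}\psi_*(u)\psi(x)^{-1}\psi_*(u)\bigr)
\]
by direct multiplication of the $2\times 2$ matrices and taking the trace of the square.

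The main obstacle is the unavoidable algebraic bookkeeping: after expansion one obtains a homogeneous polynomial in $x_1,x_2,x_3,u_1,u_2,u_3$ of total degree four. The plan for simplification is to group terms into the ``$u$-only'' part $u_1^2+u_2^2-u_3^2$ and a remainder that depends on the $x_i$. The remainder should collapse once the two constraints are applied: the embedding condition $x_1^2+x_2^2-x_3^2 = -1$ replaces quadratic $x$-terms, and the tangency condition $x_1u_1+x_2u_2-x_3u_3 = 0$ forces the cross-term $(x_1u_1+x_2u_2-x_3u_3)^2$ (which I expect to appear naturally in the expansion) to vanish. Combining these two identities, the expression should reduce exactly to $2(u_1^2+u_2^2-u_3^2) = 2\langle u,u\rangle_{\mathrm{M}}$, establishing the isometry. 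No subtlety beyond this algebraic manipulation is expected, since both metrics are smooth and $\psi$ is a diffeomorphism onto its image by construction of $\pi_4$.
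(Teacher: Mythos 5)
Your proposal is correct and follows essentially the same route as the paper's proof: compute the push-forward as the underlying linear map, invert $\psi(x)$ (the paper uses the same explicit inverse, which your determinant remark $\det\psi(x)=x_3^2-x_1^2-x_2^2=1$ justifies), expand the trace form, and reduce via the constraints $\langle x,x\rangle_{\mathrm{M}}=-1$ and $\langle x,u\rangle_{\mathrm{M}}=0$, with the remainder collapsing to the square $2(x_1u_1+x_2u_2-x_3u_3)^2$ exactly as you anticipate. No substantive difference from the paper's argument.
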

\bibliographystyle{abbrv}
\bibliography{DR-ref}
\end{document}